\newtheorem{theorem}{Theorem}[section]
\newtheorem{proposition}[theorem]{Proposition}
\newtheorem{corollary}[theorem]{Corollary}
\newtheorem{remark}[theorem]{Remark}
\newtheorem{lemma}[theorem]{Lemma}
\newenvironment{proof}[1][Proof]{\noindent\textbf{#1.} }{\ \rule{0.5em}{0.5em}}
\begin{document}
\title{Normalized multi-bump solutions for Choquard equation involving sublinear case}
\date{}
\author{He Zhang $^{1}$, Shuai Yao $^{2}$,
	Haibo Chen $^{1}$ \thanks{\small Corresponding author.\newline
    E-mail: hehzhang@163.com (H. Zhang),  shyao@sdut.edu.cn(S. Yao), math\_chb@163.com (H. Chen).}
	\\
	$^1$ {\small School of Mathematics and Statistics, Central South University, Changsha 410083, PR China}
	\\
	$^2${\small School of Mathematics and Statistics, Shandong University of Technology, Zibo 255049, PR China }}

\maketitle
\begin{abstract}
In this paper, we study the existence of normalized multi-bump solutions for the following Choquard equation
\begin{equation*}
-\epsilon^2\Delta u +\lambda u=\epsilon^{-(N-\mu)}\left(\int_{\mathbb{R}^N}\frac{Q(y)|u(y)|^p}{|x-y|^{\mu}}dy\right)Q(x)|u|^{p-2}u, \text{in}\ \mathbb{R}^N,
\end{equation*}
where $N\geq3$, $\mu\in (0,N)$, $\epsilon>0$ is a small parameter and $\lambda\in\mathbb{R}$ appears as a Lagrange multiplier.
By developing a new variational approach, we show that the problem has a family of normalized multi-bump solutions focused on the isolated part of the local maximum of the potential $Q(x)$ for sufficiently small $\epsilon>0$. The asymptotic behavior of the solutions as $\epsilon\rightarrow0$ are also explored. It is worth noting that our results encompass the sublinear case $p<2$, which complements some of the previous works.
\end{abstract}
\textbf{Keywords:}  Choquard equation; Multi-bump solutions; Variational method \newline
\textbf{MSC:}   35B25, 35J20, 35J60.

\section{Introduction}

In this paper, we study the following Choquard equation
\begin{align}
-\epsilon^2\Delta u +V(x) u=\epsilon^{-(N-\mu)}\left(\int_{\mathbb{R}^N}\frac{G(u)}{|x-y|^{\mu}}dy\right)g(u), \text{in}\ \mathbb{R}^N,\label{e1}
\end{align}
where $N\geq3$, $\mu\in (0,N)$, and $\epsilon>0$ is a small parameter.
This problem derives from finding standing wave solutions of the following time-dependdent nonlinear Chouquard equation
\begin{align}
i\epsilon\eta_t=-\epsilon^2\Delta\eta-\epsilon^{-(N-\mu)}\left(\int_{\mathbb{R}^N}\frac{G(\eta)}{|x-y|^{\mu}}dy\right)g(\eta),\ \text{in}\ \mathbb{R}\times\mathbb{R}^N.\label{e2}
\end{align}
To the best of our knowledge, an important feature of problem (\ref{e2}) is the conservation of mass with the $L^2$-norm $\int_{\mathbb{R}^N}|u|^2$ of solutions that is independent of $t\in\mathbb{R}^N$. To find the stationary state, we perform a transformation $\eta(t,x)=e^{it/\epsilon}u(x)$, which still ensures the mass conservation of the $L^2$-norm.

Problem (1.1) has appeared in many fields of mathematical physics, such as physics, laser physics, quantum mechanics, many particle system, etc. When $N=3$, $V(x)=1$, $p=2$, $\mu=N-2$, it is often referred to as the Choquard-Pekar equation and has given rise to the one-component plasma theory that describes electrons trapped in their own holes \cite{L} and the model of stationary polaritons in quantum physics \cite{Pekar}. It was introduced in the 1990s by Jones \cite{J} and Penrose \cite{P} as a model for self-gravitating matter and is known as the Schr\"{o}dinger-Newton equation.
In quantum physics models, the parameter $\epsilon$ is usually the quantised Planck's constant, which is usually very small. In quantum physics, it is expected that in the semi-classical limit as $\epsilon\rightarrow0$, the dynamics should be governed by $V(x)$.

Mathematically, as $\epsilon\rightarrow0$, the existence and asymptotic behavior of the solution to the equation (\ref{e1}) is called a semiclassical problem, which is used to describe the transition between quantum mechanics and Newtonian mechanics. Replacing the Riesz potential with a Dirac distribution, the Choquard equation (\ref{e2}) reduces to the following equation
\begin{align}
-\epsilon^2\Delta u +V(x) u=g(u),\ \text{in}\ \mathbb{R}^N.\label{e3}
\end{align}
When the nonlinear term $g(u)$ satisfies various conditions, the existence of solutions to problem (\ref{e3}) has been extensively studied. The results for multi-bump solutions without $L^2$-constant can be referred to \cite{BW, BJ, G} and the references therein. As for the constrained problem,  Ackermans and Weth \cite{AW} proved that there are infinitely many geometric distance normalized multi-bump solutions as $\epsilon_n\rightarrow0$, by assuming a periodic potential with a non-degenerate critical point. Using Lyapunov-Schmidit reduction, Pellacci et al. \cite{PPVV} obtained a bump solution concentrates as $\epsilon\rightarrow 0$.
Zhang et al. \cite{ZZ} studied the existence of normalized multi-bump solutions to the following nonlinear Schr\"{o}dinger equation
\begin{align}
\begin{cases}
-\Delta u +\lambda u=Q(\epsilon x)|u|^{2\xi}u,\ &\text{in}\ \mathbb{R}^N,\\
|u|^2_2=1,\quad u(x)\rightarrow0\ &\text{as}\ |x|\rightarrow\infty,
\end{cases}\label{e4}
\end{align}
where $\epsilon>0$ is a small parameter,  $\xi\in(0, 2/N)\cup(2/N,2^{\ast}/N)$ and $Q$ satisfies
\begin{itemize}
\item[$(Q)$] $Q\in C(\mathbb{R}^N, (0,+\infty))\cap L^{\infty}(\mathbb{R}^N)$ and there are  $ k\geq2$ mutually disjoint bounded domains $\Omega_i\subset\mathbb{R}^N$, $i=1,2,\cdots, k$ such that $
   M_i:=\max\limits_{x\in\Omega_i} Q(x)>\max\limits_{x\in\partial\Omega_i} Q(x)$.
\end{itemize}
Note that $\mathcal{A}_i:=\{ x\in \Omega_i:\ Q(x)=M_i\}$ is nonempty and compact.
By using variational methods and a penalization technique \cite{BW}, they obtained that equation (\ref{e4}) admits a family of  solutions concentracting at local maximum points of $Q$ when $\epsilon\rightarrow 0$. We find that these semi-classical solutions have a higher topological type, since they are obtained from minimax characterization of higher-dimensional symmetric connected structures.

When $\epsilon=1$, the existence of normalized solutions for problem (\ref{e1}) attracts much attentions of researchers, see
\cite{BLL,Luo,PPVV,YCRS} and the references therein.
The existence of semiclassical states for non-local problems (\ref{e1}) was first mentioned in [\cite{AM}, P. 29]. In $N=3$, $\mu=N-2$ and $F(s)=s^2$,
using  Lyapunov-Schmidt reduction method, Wei and Winter \cite{WW} obtained a family of solutions to the Schr\"{o}dinger-Newton equations that are concentrated  around a non-degenerate critical point of the potential $V(x)$ as $\epsilon\rightarrow0$, where $V>0$ and $\liminf_{|x|\rightarrow\infty}V(x)|x|^r>0$ for some $r\in [0,1)$. The Lyapunov-Schmidt reduction method depends on the uniqueness of positive ground state solutions to the limit problem:
\begin{align*}
-\Delta u+u=\left(\int_{\mathbb{R}^N}\frac{|u(y)|^2}{|x-y|}dy\right),\ \text{in}\ \mathbb{R}^3,
\end{align*}
which was established by Lieb in \cite{L}. Subsequently, Moroz and Van Schaftingen \cite{MV1} studied the following Choquard equation
\begin{align}
-\epsilon^2\Delta u +V(x) u=\epsilon^{-(N-\mu)}\left(\int_{\mathbb{R}^N}\frac{|u(y)|^p}{|x-y|^{\mu}}dy\right)|u|^{p-2}u, \text{in}\ \mathbb{R}^N.\label{e5}
\end{align}
By using the variational methods and a novel nonlocal penalization technique unlike \cite{BJ}, they proved the existence of a single-peak solution for (\ref{e5}) concentrating at a local minimum of $V$ when $p\in [2,\frac{2N-\mu}{N-2})$.  Using a penalization technique \cite{BJ}, Yang et al. \cite{YZZ} extended the results in \cite{MV1} and proved the existence of multi-peak solutions, where each peak concentrates at different local minimum of $V(x)$ as $\epsilon\rightarrow0$ when $\lim\limits_{t\rightarrow\infty}\frac{g(t)}{t^{\frac{N-\mu+2}{N-2}}}=0$ and
\begin{align}
\lim\limits_{t\rightarrow0}\frac{g(t)}{t}=0.\label{e6}
\end{align}
Liu, Ma and Xia \cite{LMX} constructed a family of localized bound states of higher topological type that concentrate around the local minimum points of the potential $V$ as $\epsilon\rightarrow0$ when $p\in [2,(2N-\mu)/(N-2))$.
For the influence of the potential well topology on the existence of multi-bump solutions, we can refer to \cite{DGY,HRZ, SL,  ZZ1} and the references therein.

We remark that most of the above results require the exponent satisfying the condition of $p\geq2$ or (\ref{e6}), which is important in their argument because it allows them to use the linearized problem at infinity. For the existence and asymptotics of the Choquard equation, we refer to \cite{MV, MV1} and the references therein.
As for the sublinear case $p\in \left(\frac{2N-\mu}{N},2\right)$, by using the deformation flow method, Cingolani and Tanaka \cite{CT} obtained the existence and multiplicity of positive single peak solutions for the Choquard equation via the cup-length of a critical set $Crit_{V_0}=\{x\in\Omega: V(x)=V_0\}$, where $\Omega\in \mathbb{R}^N$ is a boundeded set such that $V_0\equiv\inf\limits_{x\in\Omega}V(x)<\inf\limits_{x\in\partial\Omega}V(x)$.
 Subsequently, they generalised the results in \cite{CT} and proved the existence of a family of solutions in \cite{CT2} concentrated on local maxima or saddle points of $V(x)$.
However, when $\epsilon$ is small enough, we find that the existence of the normalization solution of equation (\ref{e1}) seems to have no result. Hence,
in the present paper, we study the construction of normalized multi-bump solutions for (\ref{e1}) without nondegeneracy assumptions on the potential function. To illustrate our problem, let $V(x)=1$, $p\in \left(\frac{2N-\mu}{N}, \frac{2N-\mu+2}{N}\right)\cup \left(\frac{2N-\mu+2}{N}, \frac{2N-\mu}{N-2}\right)$, $g(x)=Q(x)|u|^{p-2}u$ and by scaling,  Eq. (\ref{e1}) becomes the following Choquard equation:
\begin{align}
\begin{cases}
-\Delta v +\lambda v=\left(\int_{\mathbb{R}^N}\frac{Q(\epsilon y)|v(y)|^p}{|x-y|^{\mu}}dy\right)Q(\epsilon x)|v|^{p-2}v, &\text{in}\ \mathbb{R}^N,\\
|v|^2_2=1,\quad v(x)\rightarrow0\ &\text{as}\ |x|\rightarrow\infty,
\end{cases}\label{e7}
\end{align}

Inspired by \cite{ZZ}, we can construct the multi-peak solutions of (\ref{e7}) with the positive ground state vector solutions of the following limiting Choquard system:
\begin{align}\label{e8}
\begin{cases}
-\Delta u+\lambda u=M_i^2\left(\int_{\mathbb{R}^N}\frac{|u(y)|^p}{|x-y|^{\mu}}\right)|u(x)|^{p-2}u\quad \text{in}\ \mathbb{R}^N,\\
|u|^2_2=\frac{M_i^{\frac{-2}{p-1}}}{\sum^{ k}_{i=1}M_i^{\frac{-2}{p-1}}},\ u>0,\ \lim\limits_{|x|\rightarrow\infty}u(x)=0,
\end{cases}
\end{align}
where $M_i$ is given as $(Q)$, $i=1,\cdots,k$.  Let $(a,b)\in (\mathbb{R}^+,\mathbb{R}^+)$ replace $\left(M_i, M_i^{\frac{-2}{p-1}}/\sum^{ k}_{i=1}M_i^{\frac{-2}{p-1}}\right)$. Then system (\ref{e8}) becomes the following equation:
\begin{align}
\begin{cases}
-\Delta v+\lambda v=b\left(\int_{\mathbb{R}^N}\frac{|v(y)|^p}{|x-y|^{\mu}}\right)|v(x)|^{p-2}v\quad \text{in}\ \mathbb{R}^N,\\
|v|^2_2=a,
\end{cases}\label{e9}
\end{align}
where $a,b>0$ is a constant.
Define the energy functional $E_b:H^1(\mathbb{R}^N)\rightarrow\mathbb{R}$ corresponding to equation (\ref{e9}) by
\begin{align*}
E_b(u)=\frac{1}{2}\int_{\mathbb{R}^N}|\nabla u|^2dx-\frac{b}{2p}\int_{\mathbb{R}^N}\int_{\mathbb{R}^N}\frac{|u(x)|^p|u(y)|^p}{|x-y|^{\mu}}.
\end{align*}
Clearly, $E_b\in C^1(H^1(\mathbb{R}^N),\mathbb{R}^N)$ and $(\lambda, u)$ solves (\ref{e9}) if and only if $u$ is a critical point of $E_b$ restricted on the constraint
 $ S_{b,a}=\{u\in H^1(\mathbb{R}^N)|\ |u|^2_2=a, \mathcal{P}_b(u)=0\}$ ,
where
\begin{align*}
\mathcal{P}_b(u)=\int_{\mathbb{R}^N}|\nabla u|^2dx-\frac{b(Np-\mu)}{2p}\int_{\mathbb{R}^N}\int_{\mathbb{R}^N}\frac{|u(x)|^p|u(y)|^p}{|x-y|^{\mu}}.
\end{align*}
Define
\begin{equation*}
E_b(a)=\inf\limits_{u\in S_{b,a}}E_b(u).
\end{equation*}
Then we have the following results which have been presented in \cite{Y, CT1, Luo}.
\begin{theorem}\label{T1.1}
Let $a,b>0$, $p\in \left(\frac{2N-\mu}{N},\frac{2N-\mu}{N-2}\right)\setminus\{\frac{2N-\mu+2}{N}\}$. Then $E_b(a)$ is attained. That is, there exists $(\lambda, u_a)\in (\mathbb{R},S_{b,a})$ such that $E_b(u_a)=E_b(a)$.
\end{theorem}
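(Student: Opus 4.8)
The plan is to treat this as a constrained minimization and to split the analysis at the mass-critical exponent $\bar p:=\frac{2N-\mu+2}{N}$, which is exactly the value excluded in the hypothesis. The first task is to show that $E_b$ is bounded from below and coercive on $S_{b,a}$. Combining the Hardy--Littlewood--Sobolev inequality with the Gagliardo--Nirenberg interpolation, one controls the nonlocal term of any $u\in H^1(\mathbb{R}^N)$ by $\|\nabla u\|_2^{\theta}\|u\|_2^{2p-\theta}$ with $\theta=Np-2N+\mu$; since $\|u\|_2^2=a$ is fixed on $S_{b,a}$, the sign of $\theta-2$ governs the behaviour, and $\theta=2$ is precisely $p=\bar p$. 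Using the Pohozaev constraint $\mathcal{P}_b(u)=0$ to eliminate the nonlocal term, $E_b$ restricted to $S_{b,a}$ reduces to the fixed multiple $\left(\tfrac12-\tfrac1\theta\right)\|\nabla u\|_2^2$: a negative multiple when $p<\bar p$, in which case $\|\nabla u\|_2^2$ is a priori bounded above on $S_{b,a}$ so that $E_b(a)>-\infty$ and, by evaluating $E_b$ on a rescaled test function, $E_b(a)<0$; and a positive multiple when $p>\bar p$, in which case $\|\nabla u\|_2^2$ is bounded below by a positive constant and $E_b(a)>0$. In both cases any minimizing sequence satisfies $\|\nabla u_n\|_2^2$ bounded, hence is bounded in $H^1(\mathbb{R}^N)$.

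Next I would fix a minimizing sequence, pass to a weakly convergent subsequence $u_n\rightharpoonup u_a$, and recover compactness through the concentration--compactness principle, using the sphere $\{|u|_2^2=a\}$ as the working constraint when $p<\bar p$ and the Pohozaev manifold $S_{b,a}$ when $p>\bar p$. Vanishing is excluded because Lions' lemma would force the $L^{2Np/(2N-\mu)}$-norm, and hence the nonlocal term, to zero; the Pohozaev relation would then drive $\|\nabla u_n\|_2\to0$ and $E_b(u_n)\to0$, contradicting $E_b(a)<0$ when $p<\bar p$ and contradicting the positive lower bound on $\|\nabla u_n\|_2^2$ when $p>\bar p$. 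Dichotomy is ruled out by a strict (binding) subadditivity inequality $E_b(a)<E_b(\alpha)+E_b(a-\alpha)$ for all $\alpha\in(0,a)$, which I would derive from the scaling structure of $E_b$ under $u\mapsto t^{N/2}u(t\cdot)$. Hence the compactness alternative holds: after a suitable translation $u_n(\cdot+y_n)$ the sequence converges strongly in $L^2\cap L^{2Np/(2N-\mu)}$, so the limit carries the full mass $a$.

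To conclude I would pass to the limit. The Brezis--Lieb lemma for the Riesz-potential nonlinearity gives convergence of the nonlocal term, and weak lower semicontinuity of the Dirichlet integral together with the preserved mass yields $E_b(u_a)\le E_b(a)$ with $u_a\in S_{b,a}$, so equality holds and the infimum is attained. The Lagrange multiplier theorem then produces $\lambda\in\mathbb{R}$. When $p<\bar p$ one may minimize on the sphere alone, and the minimizer automatically satisfies $\mathcal{P}_b(u_a)=0$; when $p>\bar p$ both constraints are active, and one must check that the multiplier attached to the Pohozaev constraint vanishes, which follows by applying the Pohozaev identity to the Euler--Lagrange equation and using $\mathcal{P}_b(u_a)=0$.

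The main obstacle is the compactness step. Because the limiting equation is translation invariant, minimizing sequences may escape to infinity, and the delicate point is ruling out dichotomy via the strict subadditivity inequality; this is where the split between the two regimes, and the exclusion of $\bar p$ (at which the reduced functional degenerates and neither the upper nor the lower bound on $\|\nabla u\|_2^2$ survives), becomes essential. A secondary technical difficulty is the clean treatment of the two Lagrange multipliers in the supercritical regime $p>\bar p$.
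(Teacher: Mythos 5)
Your plan is sound, but note first that the paper does not actually prove Theorem \ref{T1.1}: it defers to Ye \cite{Y} and Cingolani--Tanaka \cite{CT1} for the subcritical range and, per Remark \ref{T1.2}$(ii)$, to an extension of Luo \cite{Luo} via Schwarz spherical rearrangement for the supercritical range. For $p<\frac{2N-\mu+2}{N}$ your route (global minimization on the sphere, coercivity via Hardy--Littlewood--Sobolev plus Gagliardo--Nirenberg, $E_b(a)<0$, strict subadditivity, Lions' concentration--compactness) is exactly the cited argument. For $p>\frac{2N-\mu+2}{N}$ your route genuinely differs: you run concentration--compactness on the Poho\v{z}aev manifold, whereas the paper's reference symmetrizes the minimizing sequence and uses the compact embedding $H^1_{\mathrm{rad}}\hookrightarrow L^q$, which eliminates dichotomy and translation loss at the outset (at the price of the Riesz rearrangement inequality and of re-projecting the rearranged function onto the manifold). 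Your version buys generality but is hardest at exactly the point you compress into one sentence: under a dichotomy $u_n=u+v_n$ the constraint $\mathcal{P}_b=0$ does not split additively between the pieces, so before subadditivity can be invoked each piece must be projected back onto the manifold along the fiber $t\mapsto t^{N/2}w(t\cdot)$ (unique maximum point $t_w$ of $E_b$ on the fiber for $w\neq0$, combined with weak lower semicontinuity applied to the two pieces separately). That step is standard but not automatic, and it is where your sketch would need real work.

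Two smaller corrections. The subadditivity $E_b(a)<E_b(\alpha)+E_b(a-\alpha)$ cannot come from the mass-preserving dilation $u\mapsto t^{N/2}u(t\cdot)$; you need the mass-changing scaling of Lemma \ref{L2-3}$(i)$, which gives $E_b(sa)=s^{\kappa}E_b(a)$ with $\kappa>1$ in the subcritical case and $\kappa<0$ in the supercritical case, whence subadditivity follows from $E_b(a)<0$, respectively from the strict decrease of $a\mapsto E_b(a)$ together with $E_b>0$. Also, your reduced energy $\left(\tfrac12-\tfrac1\theta\right)\|\nabla u\|_2^2$ with $\theta=Np-2N+\mu$ presupposes the Poho\v{z}aev coefficient $\tfrac{b(Np-2N+\mu)}{2p}$ rather than the $\tfrac{b(Np-\mu)}{2p}$ printed in the paper's definition of $\mathcal{P}_b$; your $\theta$ is the one consistent with the dilation $t^{N/2}u(t\cdot)$ and with the sign dichotomy of Lemma \ref{L2-3}$(iii)$, so you have silently (and correctly) repaired what appears to be a typo in the paper.
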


\begin{remark}\label{T1.2}
$(i)$ In the $L^2$-subcritical case $p\in \left(\frac{2N-\mu}{N},\frac{2N-\mu+2}{N}\right)$,  Ye \cite{Y} or Silvia Cingolani and Kazunaga Tanaka \cite{CT1} have shown that $E_b(a)$ is attained.

$(ii)$ In the $L^2$-supercritical  case $p\in \left(\frac{2N-\mu+2}{N},\frac{2N-\mu}{N-2}\right)$, we can extend the results of \cite{Luo} from the case of $p=2$ to $p\in \left(\frac{2N-\mu+2}{N},\frac{2N-\mu}{N-2}\right)$ by applying Schwarz spherical rearrangement. Hence, We're not going to prove it here.
\end{remark}

According to the results of theorem \ref{T1.1}, we discuss that existence of positive ground state solution of (\ref{e8}). The results are as follows:

\begin{theorem}\label{T1.3}
Let $N\geq 3$, $p\in\left(\frac{2N-\mu}{N},\frac{2N-\mu}{N-2}\right)\setminus\{\frac{2N-\mu+2}{N}\}$ and $M_i>0( i=1,\cdots,k)$ given in $(Q)$.
Assume that $(\lambda, u_{M_i})$ is the ground state solution of problem (\ref{e9}) by replacing $(a,b)$ with $\left(M_i, \frac{M_i^{\frac{-2}{p-1}}}{\sum^{ k}_{i=1}M_i^{\frac{-2}{p-1}}}\right)$.
Then  $(\lambda, u_{M_1}, \cdots, u_{M_k})$ is a normalized ground state vector solution of system (\ref{e8}). Furthermore, there holds:
\begin{align*}
\sum_{i=1}^{k}E_{M_i}(a_i)<\sum_{i=1}^{k}E_{M_i}\left(\frac{M_i^{\frac{-2}{p-1}}}{\sum^{ k}_{i=1}M_i^{\frac{-2}{p-1}}}\right),\ \text{if}\ p\in \left(\frac{2N-\mu}{N}, \frac{2N-\mu+2}{N}\right),\\
\sum_{i=1}^{k}E_{M_i}(a_i)>\sum_{i=1}^{k}E_{M_i}\left(\frac{M_i^{\frac{-2}{p-1}}}{\sum^{ k}_{i=1}M_i^{\frac{-2}{p-1}}}\right),\ \text{if}\ p\in \left(\frac{2N-\mu+2}{N}, \frac{2N-\mu}{N-2}\right),
\end{align*}
where $a_i\in (0,1)\setminus\left\{\frac{M_i^{\frac{-2}{p-1}}}{\sum^{ k}_{i=1}M_i^{\frac{-2}{p-1}}}\right\}, i=1,\cdots,k$ and $\sum_{i=1}^{k}a_i=1$.
\end{theorem}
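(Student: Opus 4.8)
The overall plan is to reduce every component equation in (\ref{e8}) to one \emph{universal} normalized Choquard problem by an amplitude rescaling that leaves the Lagrange multiplier invariant, and then to read off the two assertions from the homogeneity of the ground-state energy in the mass. Write $D(u)=\int_{\mathbb{R}^N}\int_{\mathbb{R}^N}\frac{|u(x)|^p|u(y)|^p}{|x-y|^{\mu}}\,dx\,dy$ and $S=\sum_{j=1}^{k}M_j^{-2/(p-1)}$, so the prescribed masses $a_i:=M_i^{-2/(p-1)}/S$ lie in $(0,1)$ and satisfy $\sum_i a_i=1$. First I would record the effect of the pure amplitude scaling $v=A w$ with $A=b^{-1/(2(p-1))}$: a direct substitution shows that $v$ solves (\ref{e9}) with coefficient $b$, mass $a$ and multiplier $\lambda$ if and only if $w$ solves the same equation with coefficient $1$, mass $\hat a:=A^{-2}a=b^{1/(p-1)}a$ and the \emph{same} $\lambda$, while $E_b(v)=A^2E_1(w)$ makes this a ground-state-preserving bijection of the constraint sets. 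Taking $b=M_i^2$ and $a=a_i$ gives $\hat a_i=M_i^{2/(p-1)}\cdot M_i^{-2/(p-1)}/S=1/S$, \emph{independent of $i$}: up to the explicit factor $A_i=M_i^{-1/(p-1)}$, each $u_{M_i}$ is a ground state of one and the same problem with coefficient $1$ and mass $1/S$.

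Next I would show this forces a common multiplier. Testing (\ref{e9}) with its solution and using the Pohozaev identity $\int|\nabla u|^2=\frac{b(Np-2N+\mu)}{2p}D(u)$ gives $\lambda a=bD(u)-\int|\nabla u|^2$ together with $E_b(u)=\big(\tfrac12-\tfrac1{Np-2N+\mu}\big)\int|\nabla u|^2$, so that both $\int|\nabla u|^2$ and $\lambda$ are fixed by the pair $(E_b(u),a)$. Every ground state of the universal problem realizes the same minimal energy and the same mass $1/S$, hence the same $\int|\nabla u|^2$ and the same $\lambda$; as amplitude scaling preserves $\lambda$, all the $u_{M_i}$ carry one value $\lambda^\ast$. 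Thus $(\lambda^\ast,u_{M_1},\dots,u_{M_k})$ solves the decoupled system (\ref{e8}) simultaneously, and positivity of the components together with $u_{M_i}(x)\to0$ as $|x|\to\infty$ follows in the usual way by replacing $u_{M_i}$ with $|u_{M_i}|$, Schwarz rearrangement, the strong maximum principle and standard decay estimates; this produces the normalized ground state vector solution.

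For the strict inequalities I would use the homogeneity $E_b(a)=C_b\,a^{\gamma}$, valid by the scaling covariance of both the functional and the constraint, with $\gamma=\frac{2p-(Np-2N+\mu)}{2-(Np-2N+\mu)}$. Since $\gamma-1=\frac{2(p-1)}{2-(Np-2N+\mu)}$, in the $L^2$-subcritical range one has $C_b<0$ and $\gamma>1$, so $a\mapsto E_b(a)$ is strictly concave, whereas in the $L^2$-supercritical range $C_b>0$ and $\gamma<0$, so it is strictly convex. Hence $(a_i)\mapsto\sum_i E_{M_i}(a_i)$ is strictly concave (resp. convex) on the simplex $\{a_i>0,\ \sum_i a_i=1\}$ and attains its unique interior extremum exactly where all $\frac{d}{da_i}E_{M_i}(a_i)=-\lambda_i/2$ agree, i.e. where all $\lambda_i$ coincide, which by the previous step is precisely the split $(M_i^{-2/(p-1)}/S)$. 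Therefore this split is the strict maximizer in the subcritical case and the strict minimizer in the supercritical case, yielding the two displayed inequalities for every admissible $(a_i)\neq(M_i^{-2/(p-1)}/S)$.

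I expect the main obstacle to be the well-definedness of the multiplier as a function of the mass — the assertion that \emph{all} ground states of the universal problem share the value $\lambda^\ast$, not merely the minimal energy. This is exactly where the Pohozaev identity enters, and it is also the step sensitive to the exclusion of the $L^2$-critical exponent $p=\frac{2N-\mu+2}{N}$, at which $Np-2N+\mu=2$ and the relations above degenerate. The envelope identity $\frac{d}{da}E_b(a)=-\lambda/2$ used above must likewise be justified with care, but it is standard once the minimizers are seen to depend continuously on $a$.
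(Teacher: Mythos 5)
Your treatment of the two displayed inequalities and of the construction of the vector solution is correct and, in substance, follows the same route as the paper: your homogeneity relation $E_b(a)=C_b a^{\gamma}$ with $\gamma=\frac{-Np+2N-\mu+2p}{-Np+2N+2-\mu}$ is the paper's Lemma \ref{L2-3}; your identity $\frac{d}{da}E_b(a)=-\lambda/2$ is the relation $\lambda=-\frac{2(-Np+2N-\mu+2p)}{-Np+2N+2-\mu}\,E_b(u)/a$ used in Lemma \ref{L2-6}; and your strict concavity/convexity argument on the simplex is equivalent to the weighted H\"{o}lder inequality of Lemma \ref{L2-4}$(ii)$ (an interior critical point of a strictly concave, resp.\ convex, function on a convex set is its unique maximizer, resp.\ minimizer, which is exactly the equality case of H\"{o}lder there). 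The amplitude rescaling to a single ``universal'' problem is a clean way to see that all components carry one multiplier at the split $s^0$, and the Poho\v{z}aev identity correctly determines $\lambda$ from $(E_b(u),a)$ precisely because the $L^2$-critical exponent is excluded. One small omission: you should check that $s^0$ is the \emph{unique} point of the simplex where the gradients $\gamma C_{b_i}a_i^{\gamma-1}$ all coincide (it is, since $C_{b_i}\propto b_i^{2/(-Np+2N+2-\mu)}$ forces $a_i\propto b_i^{-1/(p-1)}$), rather than merely a point where they coincide.

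The genuine gap concerns the first assertion: you do not prove that $(\lambda,u_{M_1},\dots,u_{M_k})$ is a ground state \emph{of the system}, i.e.\ that its total energy $\sigma_{s^0}$ is minimal among all solutions of (\ref{e8}); this is the content of the paper's Lemma \ref{L2-6}, which is explicitly invoked in its proof of Theorem \ref{T1.3}. Your convexity argument only compares sums of \emph{scalar ground-state energies} $\sum_iE_{b_i}(a_i)$ across mass splits, and in the subcritical case that comparison points the wrong way for minimality: $\sum_iE_{b_i}(s_i)<\sigma_{s^0}$ for $s\neq s^0$, so a priori a system solution with split $s\neq s^0$ (whose components need not be scalar ground states) could have total energy below $\sigma_{s^0}$. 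What excludes this is the common-multiplier constraint, which you have the tools for but never exploit: your own Poho\v{z}aev relations give $\lambda s_i=-2\gamma E_{b_i}(u_i)$ for every component of any system solution, hence $\sum_iE_{b_i}(u_i)=-\lambda/(2\gamma)$, and combining $E_{b_i}(u_i)\ge E_{b_i}(s_i)$ with the strict monotonicity of $a\mapsto -2\gamma E_{b_i}(a)/a$ shows that either $s\neq s^0$ or a non-minimizing component strictly moves $\lambda$ past $\lambda_0$ and therefore strictly raises the total energy above $\sigma_{s^0}$. You should add this step (it is exactly the argument of Lemma \ref{L2-6}) to obtain the ground-state property claimed in the theorem.
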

 \begin{remark}\label{T1.4}
 If $N=3, p=2$ and $ \mu=N-2$, then the solution of systems (\ref{e8}) and (\ref{e9}) is unique.
 \end{remark}

From the results of Theorems \ref{T1.1} and \ref{T1.3}, we construct a family of solutions of problem (\ref{e7}).

\begin{theorem}\label{T1.5}
Suppose $(Q)$ holds and $p\in \left(\frac{2N-\mu}{N},\frac{2N-\mu}{N-2}\right)\setminus\{\frac{2N-\mu+2}{N}\}$. Then $\epsilon_0>0$ exists such that for each $\epsilon\in (0,\epsilon_0)$, problem $(\ref{e7})$ has a solution $(\lambda_{\epsilon}, v_{\epsilon})\in \mathbb{R}\times
 H^1(\mathbb{R}^N)$. Furthermore, there is $ k$ local maximum points $x_{\epsilon}^i$ ($i=1, \cdots,  k$, $k\geq2$) of $v_{\epsilon}>0$ such that $\lim\limits_{\epsilon\rightarrow0}\max\limits_{1\leq i\leq k} dist(x_{\epsilon}^i,\mathcal{A}_i)=0$
and $\lambda_{\epsilon}\rightarrow \lambda_0$, $v_{\epsilon}(\epsilon x+x_{\epsilon}^i)
 \rightarrow u_i$ as $\epsilon\rightarrow0$,
 where $(\lambda_0, u_i)$ is the normalized ground state vector solution to (\ref{e8}).
\end{theorem}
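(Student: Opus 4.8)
The plan is to construct the $k$-bump solution by combining a localized constrained minimization on the mass sphere with a nonlocal penalization, gluing together $k$ translates of the ground-state vector solution $(\lambda_0,u_1,\dots,u_k)$ of the limit system (\ref{e8}) furnished by Theorems \ref{T1.1} and \ref{T1.3}. Throughout I would work in the scaled variable, so that the $i$-th bump is expected to live near the dilated region $\Omega_i^\epsilon:=\{x:\epsilon x\in\Omega_i\}$ and to carry mass $a_i:=M_i^{-2/(p-1)}/\sum_{j}M_j^{-2/(p-1)}$, the optimal split identified in Theorem \ref{T1.3}.

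First I would modify the problem. Following the nonlocal penalization of \cite{MV1,YZZ}, I would leave the Choquard nonlinearity unchanged on $\cup_i\Omega_i^\epsilon$ and truncate both the convolution weight $Q(\epsilon y)|v|^p$ and the outer factor $Q(\epsilon x)|v|^{p-2}v$ outside this set, adding a penalty term that vanishes on $\cup_i\Omega_i^\epsilon$ and is coercive outside. This produces a penalized energy $J_\epsilon$ on $H^1(\mathbb{R}^N)$ whose critical points on $S:=\{v:|v|_2^2=1\}$ are candidate solutions. For test functions I would take $\Phi_\epsilon=\sum_i u_i(\cdot-y_i^\epsilon)$ with $y_i^\epsilon\in\Omega_i^\epsilon$ chosen so that $\epsilon y_i^\epsilon\to\mathcal A_i$, renormalized on $S$; using the exponential decay of the $u_i$ and the disjointness of the supports one checks that $J_\epsilon(\Phi_\epsilon)\to\sum_iE_{M_i}(a_i)$.

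Next I would locate a critical point inside a small $H^1$-neighborhood $M^\epsilon$ of the configuration $\{\Phi_\epsilon\}$ intersected with $S$ (in the $L^2$-supercritical range $p>(2N-\mu+2)/N$ intersected additionally with a Pohozaev set $\mathcal P$, so that the constrained energy is bounded below). Minimizing $J_\epsilon$ over $M^\epsilon\cap S$ yields a minimizer $v_\epsilon$ with Lagrange multiplier $\lambda_\epsilon$; the crucial step is to show that $v_\epsilon$ lies in the interior of $M^\epsilon$, hence is a free critical point, which I would establish by proving that the energy on $\partial M^\epsilon$ exceeds the level $\sum_iE_{M_i}(a_i)$ by a fixed amount, using the strict energy inequalities of Theorem \ref{T1.3}. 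Rescaling $w_\epsilon^i:=v_\epsilon(\epsilon\cdot+x_\epsilon^i)$ about the maxima $x_\epsilon^i$, I would pass to the limit to see each $w_\epsilon^i\to u_i$ in $H^1$, read off $\lambda_\epsilon\to\lambda_0$ from the equation, and finally verify that the penalization is inactive, i.e. $v_\epsilon$ has negligible mass and energy outside $\cup_i\Omega_i^\epsilon$, so that $v_\epsilon$ in fact solves (\ref{e7}).

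The main obstacle is the sublinear regime $p<2$. The penalization schemes of \cite{MV1,YZZ} rely on linearizing the equation at infinity (condition (\ref{e6})) and on pointwise barrier/comparison functions, both of which fail here since $|u|^{p-2}u$ is singular at $u=0$ and has no controlling linear part. To replace them I would argue purely at the level of energy: a Pohozaev identity on $S$ together with the sharp lower bound for the penalized level rules out any mass leaking outside $\cup_i\Omega_i^\epsilon$ or splitting between distinct regions, while the strict inequalities in Theorem \ref{T1.3} force the limiting mass to be distributed exactly as $(a_1,\dots,a_k)$. Controlling the sign and the convergence of $\lambda_\epsilon$ uniformly in $\epsilon$, simultaneously with this concentration analysis and without a linear comparison, is the delicate point that ties the whole argument together.
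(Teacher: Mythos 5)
Your proposal has two genuine gaps, both of which the paper is specifically structured to avoid.

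First, the strategy of minimizing a (penalized) energy over a neighborhood $M^\epsilon\cap S$ and showing the boundary energy exceeds the target level is structurally wrong in the $L^2$-subcritical case. By Theorem \ref{T1.3} (equivalently Lemma \ref{L2-4}$(ii)$), for $p\in\left(\frac{2N-\mu}{N},\frac{2N-\mu+2}{N}\right)$ the balanced mass split $s^0$ is a strict \emph{maximum} of $s\mapsto\sum_iE_{b_i}(s_i)$ over the simplex: every unbalanced split gives strictly \emph{lower} total energy. So on the portion of $\partial M^\epsilon$ reached by redistributing mass between bumps the energy drops below $\sigma_{s^0}$ rather than exceeding it, the infimum over $M^\epsilon\cap S$ is attained on (or escapes through) the boundary, and no interior critical point is produced. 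The limit configuration is a saddle (minimize in each $u_i$ at fixed mass, maximize over the mass distribution), which is why the paper replaces minimization by a $(k-1)$-dimensional minimax family $L_{\epsilon,s}$, $s\in K_\nu$, together with the mass-distribution map $\rho_\epsilon$ and a Brouwer degree argument to force the realized split to equal $s^0$; an analogous degree argument with dilation parameters handles the supercritical case.

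Second, the penalization framework of \cite{MV1,YZZ} is precisely what the paper discards for $p<2$, and your proposed energy-level substitute does not close the gap. Showing the penalty is inactive (no mass or energy outside $\cup_i\Omega_i^\epsilon$) normally rests on comparison with the linearization at infinity and on decay estimates; for $p<2$ the nonlinearity has no controlling linear part and, as the paper notes, even the decay of the limit profiles $u_i$ is not known to be exponential, so your test-function estimate and your ``penalization is inactive'' step both lack a proof. The paper's replacement is the tail-minimizing operator $J_\epsilon$ (Proposition \ref{L3.6}): a gradient flow acting only outside $\cup_i(\mathcal{A}_{\epsilon,i})^{\tau}$ that does not increase $E_\epsilon$, fixes $u$ on the concentration regions, and drives $|\nabla u|^2_{L^2((\cup_i\Omega_{\epsilon,i})^c)}$ below a threshold $c_\epsilon\to0$; combined with the deformation $\theta$ from Lemma \ref{L3.5}, the concentration-compactness result (Proposition \ref{L3.1}) and the Palais--Smale property (Proposition \ref{L3.3}), this controls the tails variationally with no decay or linearization input. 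A further, smaller issue: since the ground states of the limit system need not be unique, your neighborhood should be built around the whole compact set $\mathcal{S}$ of glued profiles (as in the paper's $D^\epsilon_d$, using Lemma \ref{L2-7}), not around a single configuration $\Phi_\epsilon$.
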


\begin{remark}\label{T1.6}
$(i)$  In Theorem \ref{T1.5}, we also show that the solutions of problem (\ref{e7}) is concentrated on the normalized ground state solutions of the limit problem (\ref{e8}), which does not require the solution of problem (\ref{e8}) to be unique.

$(ii)$ As shown by theorem \ref{T1.5}, our results contain the case of $p<2$, which extends the results in \cite{MV1,YZZ, LMX}.
\end{remark}

To prove Theorem \ref{T1.5}, we develop a new variational argument which relies and extends some works by  \cite{CT} and \cite{ZZ}.
In \cite{ZZ}, Zhang et al. use the penalization technique and the uniqueness of the solution to the following problem
\begin{align*}
\begin{cases}
-\Delta u_i+\lambda u_i=M_i |u_i|^{2\xi}u_i \ \text{in}\ \mathbb{R}^N,\\
u_i(x)>0,\ \lim\limits_{x\rightarrow\infty}u_i(x)=0,\ i=1,\cdots, k,\\
\sum\limits_{i=1}^{ k}|u_i|^2_2=1,
\end{cases}
\end{align*}
which is very important to the concentration of compactness results. However, due to the existence of non-local term $\left(\int_{\mathbb{R}^N}\frac{|u(y)|^p}{|x-y|^{\mu}}\right)|u(x)|^{p-2}u$, we face the following two difficulties: $(i)$ the solution of system (\ref{e8}) is not necessarily unique. $(ii)$ Penalization technique depends on the decay property of linear equations outside a small ball. For this purpose, the exponential decay of the solution is crucial, which is valid when $p\geq 2$. However, it is invalid for the condition $p<2$ since little is known about the asympototic behaviour for the bound state solutions of the Choquard equations.
Thus, to overcome these difficulties, we need to adopt other methods and techniques.

First of all,  we obtain the properties of the positive ground state solutions of system (\ref{e8}). Then according to these properties, we deal with the difficulties arising from the non-uniqueness of the solutions of system (\ref{e8}).
 We must point out that  we can obtain the existence of normalized multi-bump solutions to problem (\ref{e7}) for $p\geq 2$ by using a penalization technique as in \cite{ZZ}. However, it can not deal with the case of $p<2$. Hence, in this paper, to obtain the results for $p\in\left(\frac{2N-\mu}{N}, \frac{2N-\mu}{N-2}\right)\setminus\{\frac{2N-\mu+2}{N}\}$ which contains the case of $p<2$ , we take a new approach which is inspired by \cite{CT} without using Penalization technique and Lyapunov-Schmidt reduction method.
Similar to \cite{CT}, we also find an operator $J_{\epsilon}$, which has similar properties to the tail minimizing operator in \cite{BT}. Using the tail minimizing flow $J_{\epsilon}$ together with the standard deformation argument, we can directly deal with energy function $E_{\epsilon}$ in $D^{\epsilon}_d$. $E_{\epsilon}$, $D^{\epsilon}_d$ are defined in Section 3.
In the present paper, we do not require the monotonicity condition or Ambrosetti-Rabinowtiz condition \cite{AR}.

The paper is organized as follows. In Section 2, we discuss some properties about the limit system (\ref{e8}) and give the proof of Theorem \ref{T1.3}. In Section 3, we first give a concentration compactness type result. Following this, we develop a deformation argument for a tail of $u\in D^{\epsilon}_d$ to construct a map $J_{\epsilon}$. Eventually, we present the proof of Theorem \ref{T1.5}.

\section{The properties of ground state to limit problem}
~~~~In this section we give some preliminary results and study the existence and properties of normalized ground state solutions to system (\ref{e8}).

First, we give the following Hardy-Little-Sobolev inequality.
\begin{proposition}\label{L2-1} (Hardy-Little-Sobolev inequality \cite{LM})
Let $q,r>1$ and $\mu\in (0,N)$ with $1/q+1/r=\frac{2N-\mu}{N}$. Then there exists a constant $C=(N,\mu,q,r)>0$ such that
\begin{align*}
\left|\int_{\mathbb{R}^N}\int_{\mathbb{R}^N}\frac{f(x)g(y)}{|x-y|^{\mu}}dx\right|\leq C|f|_q|g|_r
\end{align*}
for all $f\in L^q(\mathbb{R}^N)$ and $g\in L^r(\mathbb{R}^N)$.
\end{proposition}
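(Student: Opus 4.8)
The plan is to reduce this bilinear estimate to a linear convolution bound and then invoke the weak-type Young inequality, since the singular kernel $|x|^{-\mu}$ lies exactly on the borderline of the Lebesgue scale. First I would rewrite the left-hand side as
\begin{equation*}
\int_{\mathbb{R}^N}\int_{\mathbb{R}^N}\frac{f(x)g(y)}{|x-y|^{\mu}}\,dx\,dy=\int_{\mathbb{R}^N}f(x)\,(I_{\mu}*g)(x)\,dx,\qquad I_{\mu}(x):=|x|^{-\mu},
\end{equation*}
and apply H\"older's inequality with the conjugate pair $(q,q')$, where $1/q+1/q'=1$. This shows it suffices to prove that the convolution operator $Tg:=I_{\mu}*g$ is bounded from $L^{r}(\mathbb{R}^N)$ into $L^{q'}(\mathbb{R}^N)$. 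A direct check of exponents, starting from the hypothesis $1/q+1/r=(2N-\mu)/N$, gives $1/q'=\mu/N+1/r-1$, which is precisely the relation needed for the mapping property invoked below; the same computation, using $1/q<1$ and $\mu\in(0,N)$, shows that $q'>r$ and $q'<\infty$, so the admissibility ranges are in force.

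Next I would observe that $I_{\mu}$ belongs to no strong Lebesgue space but sits exactly in the weak space $L^{N/\mu,\infty}(\mathbb{R}^N)$: its distribution function is
\begin{equation*}
\bigl|\{x\in\mathbb{R}^N:\ |x|^{-\mu}>\lambda\}\bigr|=\bigl|\{x\in\mathbb{R}^N:\ |x|<\lambda^{-1/\mu}\}\bigr|=\omega_N\,\lambda^{-N/\mu},
\end{equation*}
so that $\|I_{\mu}\|_{L^{N/\mu,\infty}}<\infty$. Consequently the required bound on $T$ is an instance of the weak-type Young inequality: convolution with a fixed function in $L^{p,\infty}$ maps $L^{r}$ into $L^{s}$ whenever $1<r<s<\infty$ and $1/s+1=1/p+1/r$. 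Taking $p=N/\mu$ and $s=q'$, the exponent identity established in the first step furnishes exactly these hypotheses.

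It remains to prove this weak Young inequality, which is the heart of the matter. I would split the kernel at a threshold $\tau>0$, writing $I_{\mu}=I_{\mu}\mathbf{1}_{\{I_{\mu}\le\tau\}}+I_{\mu}\mathbf{1}_{\{I_{\mu}>\tau\}}$; the membership $I_{\mu}\in L^{N/\mu,\infty}$ yields $I_{\mu}\mathbf{1}_{\{I_{\mu}\le\tau\}}\in L^{p_1}$ for every $p_1>N/\mu$ and $I_{\mu}\mathbf{1}_{\{I_{\mu}>\tau\}}\in L^{p_0}$ for every $p_0<N/\mu$, with norms that are explicit powers of $\tau$. Applying the classical Young inequality to each piece bounds the two contributions $\|(I_{\mu}\mathbf{1}_{\{I_{\mu}>\tau\}})*g\|_{s_0}$ and $\|(I_{\mu}\mathbf{1}_{\{I_{\mu}\le\tau\}})*g\|_{s_1}$ with $s_0<s<s_1$; then estimating the distribution function of $Tg$ at level $2\lambda$ by these contributions and optimizing $\tau=\tau(\lambda)$ produces the strong $L^{s}$ bound through the Marcinkiewicz interpolation scheme. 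The main obstacle is exactly this final step: because $I_{\mu}$ fails to lie in strong $L^{N/\mu}$, Young's inequality cannot be applied directly, and one must carefully track how the $L^{p_0}$ and $L^{p_1}$ norms of the truncated kernels scale in $\tau$ and then balance them against $\lambda$ to close the real-interpolation estimate. As an alternative that sidesteps the truncation bookkeeping, one may first reduce to nonnegative symmetric-decreasing $f$ and $g$ via the Riesz rearrangement inequality and estimate the resulting integral directly, but the interpolation route is the more robust and is the one I would follow.
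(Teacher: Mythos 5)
The paper offers no proof of this proposition at all: it is quoted verbatim as a classical result with a citation to Lieb--Loss, so there is nothing in the text to compare your argument against line by line. That said, your proof outline is correct and is one of the two standard routes. The exponent bookkeeping checks out: from $1/q+1/r=(2N-\mu)/N$ one gets $1/q'=\mu/N+1/r-1$, the conditions $q,r>1$ and $\mu\in(0,N)$ force $1<r<q'<\infty$, and the distribution function of $|x|^{-\mu}$ indeed places the kernel exactly in $L^{N/\mu,\infty}$, so the weak-type Young inequality (kernel split at height $\tau$, classical Young on each piece, Chebyshev on the level set of $Tg$ at $2\lambda$, optimization in $\tau$, and Marcinkiewicz interpolation to upgrade the resulting weak $(r,q')$ bounds to a strong one) delivers $\|I_\mu*g\|_{q'}\leq C\|g\|_r$, after which H\"older closes the estimate. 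The reference the authors actually cite proves the inequality differently, via the Riesz rearrangement inequality and a layer-cake reduction to symmetric-decreasing functions --- the route you mention only as an alternative; that approach has the advantage of yielding the sharp constant in the conjugate-exponent case, whereas your interpolation argument is more robust (it needs nothing about the kernel beyond its weak Lebesgue norm) but produces no sharp constant. Since the proposition is used here only qualitatively, either proof is entirely adequate, and your sketch contains no gap beyond the deliberately deferred (and standard) truncation bookkeeping.
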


We also give the following inequality used in the later section.

\begin{proposition}\label{L2-2} (\cite{CT})
Let $q,r>1$ and $\mu\in (0,N)$ with $1/q+1/r<\frac{2N-\mu}{N}$, then there exists a constant $D_R>0$ depending on $R>0$ such that $\lim\limits_{R\rightarrow\infty}D_R=0$
and
\begin{align*}
\left|\int_{\mathbb{R}^N}\int_{\mathbb{R}^N}\frac{f(x)g(y)}{|x-y|^{\mu}}dx\right|\leq D_R|f|_q|g|_r
\end{align*}
for all $f\in L^q(\mathbb{R}^N)$ and $g\in L^r(\mathbb{R}^N)$ with $\rm{dist}(\rm{supp}\ f, \rm{supp}\ g)\geq R$.
\end{proposition}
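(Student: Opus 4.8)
The plan is to exploit the support separation through a dyadic decomposition of the Riesz kernel, reducing everything to Young's convolution inequality (equivalently, to the classical Hardy--Littlewood--Sobolev inequality of Proposition~\ref{L2-1}) on annular shells. Since the integrand vanishes unless $x\in\mathrm{supp}\,f$ and $y\in\mathrm{supp}\,g$, the hypothesis $\mathrm{dist}(\mathrm{supp}\,f,\mathrm{supp}\,g)\ge R$ forces $|x-y|\ge R$ on the entire domain of integration, so it suffices to estimate
\[
I:=\int_{\mathbb{R}^N}\int_{\{|x-y|\ge R\}}\frac{|f(x)|\,|g(y)|}{|x-y|^{\mu}}\,dy\,dx .
\]
First I would write $\{|x-y|\ge R\}=\bigsqcup_{j\ge 0}A_j$ with $A_j=\{\,2^{j}R\le |x-y|<2^{j+1}R\,\}$, and use the crude pointwise bound $|x-y|^{-\mu}\le (2^{j}R)^{-\mu}$ valid on $A_j$.

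On each shell I would peel off this kernel factor and estimate the remaining bilinear form by H\"older followed by Young's inequality: setting $\tfrac{1}{t}=2-\tfrac1q-\tfrac1r$,
\[
\int\!\!\int_{x-y\in A_j}|f(x)|\,|g(y)|\,dy\,dx\le |f|_q\,\big|\mathbf 1_{A_j}\ast|g|\big|_{q'}\le |f|_q\,|g|_r\,|\mathbf 1_{A_j}|_{t},
\]
where $|\mathbf 1_{A_j}|_t=|A_j|^{1/t}\le C_N(2^{j}R)^{N/t}$ because the shell $A_j$ has Lebesgue measure comparable to $(2^{j}R)^N$. The admissibility of Young's inequality here, namely $1\le t<\infty$, is guaranteed by $\tfrac{2N-\mu}{N}<\tfrac1q+\tfrac1r<2$ (the lower bound from the hypothesis, the upper from $q,r>1$); in fact this forces $t>N/\mu$, a fact that uses only $q,r>1$ and $\mu<N$.

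Summing over the shells then yields
\[
I\le C_N\,|f|_q\,|g|_r\sum_{j\ge 0}(2^{j}R)^{\frac Nt-\mu}
= C_N\,|f|_q\,|g|_r\,R^{\frac Nt-\mu}\sum_{j\ge 0}2^{\,j(\frac Nt-\mu)} .
\]
The decisive point is the algebraic identity $\frac Nt-\mu=-N\delta$, where $\delta:=\big(\tfrac1q+\tfrac1r\big)-\tfrac{2N-\mu}{N}$ measures the deviation from the Hardy--Littlewood--Sobolev balance relation. The strict inequality in the hypothesis makes $\delta\neq 0$ of the sign that renders $\frac Nt-\mu<0$; hence the geometric series converges, to $\big(1-2^{-N\delta}\big)^{-1}$, and one may take
\[
D_R:=\frac{C_N}{1-2^{-N\delta}}\;R^{-N\delta}\xrightarrow[R\to\infty]{}0 ,
\]
which is exactly the asserted decay.

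I expect the only genuine obstacle to be this final step. At the critical balance $\delta=0$ the common ratio of the series equals $1$, the sum diverges, and no $R$-gain survives --- a direct manifestation of the scale invariance of the Riesz potential, which rules out any decay in the borderline HLS case. Thus the whole argument lives or dies on converting the strict gap $\delta$ into a summable geometric series carrying an $R$-dependent leading factor, and the annular (dyadic) decomposition is precisely the device that isolates this gain shell by shell. The remaining verifications --- that $t$ lies in the admissible Young range and that $|A_j|\simeq(2^jR)^N$ --- are routine and require nothing beyond $q,r>1$ and $0<\mu<N$.
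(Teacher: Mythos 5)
Your overall strategy --- dyadic shells $A_j=\{2^jR\le|x-y|<2^{j+1}R\}$, the crude kernel bound $(2^jR)^{-\mu}$ on each shell, H\"older followed by Young, and geometric summation --- is the standard proof of this estimate; the paper gives no proof and simply cites \cite{CT}, where the argument is essentially the same (one can even skip the dyadic decomposition and apply Young's inequality once, using $\left\||z|^{-\mu}\right\|_{L^t(\{|z|\ge R\})}=c_NR^{N/t-\mu}$, which is finite exactly when $N/t<\mu$). Your individual computations are all correct: $\tfrac1t=2-\tfrac1q-\tfrac1r$, $|A_j|^{1/t}\le C_N(2^jR)^{N/t}$, and $\tfrac Nt-\mu=-N\delta$ with $\delta=\left(\tfrac1q+\tfrac1r\right)-\tfrac{2N-\mu}{N}$.

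The gap is the sign of $\delta$. Your argument needs $\delta>0$: only then is $\tfrac Nt-\mu<0$, so that the geometric series converges and $R^{-N\delta}\to0$; and only then does one get $\tfrac1q+\tfrac1r>\tfrac{2N-\mu}{N}>1$, which is what forces $t\ge1$ and legitimizes Young's inequality (your parenthetical claim that $t>N/\mu$ "uses only $q,r>1$ and $\mu<N$" is false --- it is equivalent to $\delta>0$). But the hypothesis as printed is $\tfrac1q+\tfrac1r<\tfrac{2N-\mu}{N}$, i.e.\ $\delta<0$; when you invoke "the lower bound from the hypothesis" you have silently reversed the stated inequality, and under the printed hypothesis your series diverges while $R^{-N\delta}\to\infty$. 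In fact the proposition as printed is false: taking $f=\mathbf 1_{B_\rho(0)}$ and $g=\mathbf 1_{B_\rho(3\rho e_1)}$, the ratio of the double integral to $|f|_q|g|_r$ is of order $\rho^{\,2N-\mu-N/q-N/r}=\rho^{-N\delta}$, which blows up as $\rho\to\infty$ when $\delta<0$. The inequality in the statement is evidently a typo for "$>$": that is the form in \cite{CT}, and it is the form the paper actually uses later (e.g.\ in the estimate of $(II)$ in Lemma \ref{L4.1}, where $f=g=|u|^p$, $q=r=2/p$ and $\tfrac1q+\tfrac1r=p>\tfrac{2N-\mu}{N}$). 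So what you have written is a correct proof of the corrected statement, but you must state explicitly that you are assuming $\tfrac1q+\tfrac1r>\tfrac{2N-\mu}{N}$ and flag that the printed "$<$" cannot be right; as it stands, the decisive step of your proof contradicts the hypothesis you were asked to use.
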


Observe that when $k=1$, system (\ref{e8}) is equivalent to problem (\ref{e9}). Hence, we first consider the properties of problem (\ref{e9}).
We define
\begin{align*}
\overline{S}_{b,a}=\{ u\in  S_{b,a}| E_b(u)=E_b(a)\}.
\end{align*}
From Theorem \ref{T1.1}, we know that $E_b(a)$ is attained and hence $\overline{S}_{b,a}$ is non-empty.
Then we discuss the relationship between the normalized ground state solution of problem (\ref{e9}) and the mass $a$ with $L^2$-norm, the coefficient  $b$ of the nonlinearity.
\begin{lemma}\label{L2-3}
Let the constants $s, a,b>0$, $p\in\left(\frac{2N-\mu}{N},\frac{2N-\mu}{N-2}\right)\setminus\left\{\frac{2N-\mu+2}{N}\right\}$ . Then we have the following statements:

$(i)$ $v\in  \overline{S}_{b,a}$ if and only if $u(\cdot)=s^{\frac{2+N-\mu}{2(-Np+2N+2-\mu)}}v(s^{\frac{p-1}{-Np+2N+2-\mu}}\cdot)\in\overline{ S}_{b,sa}$
and $ S_{b,d}\neq\emptyset$. Furthermore, there holds
\begin{equation}
 E_b(sa)=s^{\frac{-Np+2N-\mu+2p}{-Np+2N+2-\mu}} E_b(a).\label{e10}
\end{equation}

$(ii)$ $v\in  \overline{S}_{b,a}$ if and only if $u(\cdot)=s^{\frac{N}{2(-Np+2N+2-\mu)}}v(s^{\frac{1}{-Np+2N+2-\mu}}\cdot)\in \overline{S}_{sb,a}$. Moreover, there holds
\begin{align}
    E_{sb}(a)=s^{\frac{-Np+2N-\mu+2p}{-Np+2N+2-\mu}} E_b(a).\label{e11}
\end{align}

$(iii)$ There holds:
\begin{align*}
E_{b}(a)<0, \text{if}\ p\in \left(\frac{2N-\mu}{N},\frac{2N-\mu+2}{N}\right),\\
E_{b}(a)>0, \text{if}\ p\in \left(\frac{2N-\mu+2}{N},\frac{2N-\mu}{N-2}\right).
\end{align*}

$(iv)$ $E_b(a)$ is continuous and strictly decremented for $a, b>0$.
\end{lemma}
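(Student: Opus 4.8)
The plan is to prove the four statements of Lemma~\ref{L2-3} by systematically exploiting the scaling structure of the functional $E_b$ and the constraint manifold $S_{b,a}$. The key observation is that the nonlocal term $\iint \frac{|u(x)|^p|u(y)|^p}{|x-y|^{\mu}}$ scales with a fixed homogeneity in both the amplitude and the spatial dilation of $u$, so that both the mass $a$ and the coefficient $b$ can be absorbed by an explicit change of variables.

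\textbf{Parts $(i)$ and $(ii)$: scaling identities.} First I would establish the two scaling transformations directly. For a transformation $u(\cdot)=s^{\alpha}v(s^{\beta}\cdot)$, I would compute how each of the three ingredients transforms: the $L^2$-norm $|u|_2^2$, the Dirichlet energy $\int|\nabla u|^2$, and the Riesz double integral. Writing out $\int|\nabla u|^2 = s^{2\alpha+2\beta-N\beta}\int|\nabla v|^2$ and, after the substitution $x=s^{-\beta}\tilde x$, $y=s^{-\beta}\tilde y$, $\iint\frac{|u(x)|^p|u(y)|^p}{|x-y|^\mu} = s^{2\alpha p + (\mu-2N)\beta}\iint\frac{|v|^p|v|^p}{|x-y|^\mu}$, I would choose the exponents $\alpha,\beta$ precisely so that $|u|_2^2 = s\,|v|_2^2$ (giving mass $sa$ in part $(i)$) or $|u|_2^2=|v|_2^2$ together with the coefficient rescaling in part $(ii)$. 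Solving the resulting linear system in $\alpha,\beta$ should reproduce the stated exponents $\tfrac{2+N-\mu}{2(-Np+2N+2-\mu)}$ and so on; the denominator $-Np+2N+2-\mu$ is exactly what appears, so the algebra is forced. Crucially I must check that this map sends $\overline{S}_{b,a}$ onto $\overline{S}_{b,sa}$: it is a bijection on $H^1$ preserving criticality because the Pohozaev functional $\mathcal{P}_b$ and the energy transform consistently, and minimizers map to minimizers since the transformation is invertible and monotone in $E_b$. The energy identities~\eqref{e10} and~\eqref{e11} then follow by substituting the chosen exponents into the transformed energy.

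\textbf{Part $(iii)$: sign of $E_b(a)$.} For the sign, I would use the Pohozaev constraint $\mathcal{P}_b(u)=0$, which relates $\int|\nabla u|^2 = \tfrac{b(Np-\mu)}{2p}\iint\cdots$ on the constraint. Substituting this into $E_b(u)=\tfrac12\int|\nabla u|^2 - \tfrac{b}{2p}\iint\cdots$ yields a single expression $E_b(u) = \bigl(\tfrac12 - \tfrac{1}{Np-\mu}\bigr)\int|\nabla u|^2$ (up to recomputing the constant), whose sign is governed by whether $Np-\mu$ is larger or smaller than $2$, i.e.\ by the position of $p$ relative to the $L^2$-critical exponent $\tfrac{2N-\mu+2}{N}$. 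Since the infimum $E_b(a)$ is attained by some $u_a\in\overline{S}_{b,a}$ with $\int|\nabla u_a|^2>0$, the sign of $E_b(a)$ is determined, giving the two stated cases.

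\textbf{Part $(iv)$: continuity and monotonicity.} For fixed $b$, monotonicity in $a$ follows immediately from~\eqref{e10}: since $E_b(sa)=s^{\gamma}E_b(a)$ with $\gamma=\tfrac{-Np+2N-\mu+2p}{-Np+2N+2-\mu}$, and since by part $(iii)$ $E_b(a)$ has a definite sign, the map $a\mapsto E_b(a)$ is a power function up to the sign, hence strictly monotone and continuous on $(0,\infty)$; the same argument in $b$ uses~\eqref{e11}. I would verify that the exponent $\gamma$ has the correct sign to produce strict decrease in both subcritical and supercritical regimes once combined with the sign of $E_b(a)$ from part $(iii)$. The main obstacle I anticipate is part $(i)$: one must verify not merely that the scaling maps $S_{b,a}$ to $S_{b,sa}$ but that it carries the \emph{minimizing} set onto the minimizing set (both inclusions), which requires the transformation to be a genuine bijection respecting the variational characterization, and one must confirm the attainment $S_{b,sa}\neq\emptyset$ invoked in the statement, drawing on Theorem~\ref{T1.1}. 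Once the scaling bijection is clean, parts $(iii)$ and $(iv)$ are comparatively routine consequences.
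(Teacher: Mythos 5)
Your treatment of parts $(i)$, $(ii)$ and $(iv)$ is essentially the paper's own argument: the paper likewise just records how $|u|_2^2$, $|\nabla u|_2^2$ and the Riesz double integral transform under $u=s^{\alpha}v(s^{\beta}\cdot)$ and reads off \eqref{e10}--\eqref{e11}, then obtains $(iv)$ by combining the two scaling laws with the sign information from $(iii)$. Your general formula $\iint|u|^p|u|^p|x-y|^{-\mu}=s^{2\alpha p+(\mu-2N)\beta}\iint|v|^p|v|^p|x-y|^{-\mu}$ is correct, and your insistence on checking that the scaling is a bijection carrying $\overline{S}_{b,a}$ onto $\overline{S}_{b,sa}$ (both $E_b$ and $\mathcal{P}_b$ rescale by positive factors, so minimizers go to minimizers) is a point the paper passes over silently but which is needed. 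The genuine divergence is in part $(iii)$: the paper simply cites \cite{Y,CT1,Luo}, whereas you give a self-contained argument by evaluating $E_b$ on the Pohozaev manifold. That route is sound and arguably preferable, since it makes the lemma independent of external attainment results beyond Theorem \ref{T1.1}.

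One concrete caution about your part $(iii)$. Taking the paper's displayed $\mathcal{P}_b$ at face value you get $E_b(u)=\bigl(\tfrac12-\tfrac{1}{Np-\mu}\bigr)\int|\nabla u|^2$ on the constraint, and the sign of $\tfrac12-\tfrac1{Np-\mu}$ changes at $p=\tfrac{\mu+2}{N}$, \emph{not} at the mass-critical exponent $\tfrac{2N-\mu+2}{N}$ as you assert; the equivalence you state there is false as written. The dichotomy you want does come out, but only after correcting the Pohozaev coefficient: under the mass-preserving dilation $t^{N/2}u(t\cdot)$ the Riesz term scales like $t^{Np-2N+\mu}$, so the functional whose vanishing characterizes solutions is $\int|\nabla u|^2-\tfrac{b(Np-2N+\mu)}{2p}\iint$, giving $E_b(u)=\bigl(\tfrac12-\tfrac1{Np-2N+\mu}\bigr)\int|\nabla u|^2$, whose sign flips exactly at $Np-2N+\mu=2$, i.e.\ at $p=\tfrac{2N-\mu+2}{N}$, as required (and is negative on the whole subcritical range since $0<Np-2N+\mu<2$ there). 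So your ``up to recomputing the constant'' hedge is doing real work: the argument succeeds, but only with the corrected exponent, and in the supercritical case you do need attainment (or a Gagliardo--Nirenberg lower bound on $\int|\nabla u|^2$ over $S_{b,a}$) to upgrade $E_b(a)\geq0$ to strict positivity, as you note.
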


\begin{proof}
$(i)$
By calculating, we have
\begin{align*}
|u|^2_2=s|v|^2_2,\quad |\nabla u|^2_2=s^{\frac{-Np+2N-\mu+2p}{-Np+2N+2-\mu}}|\nabla v|^2_2,
 \int_{\mathbb{R}^N}\int_{\mathbb{R}^N}\frac{|u|^p|u|^p}{|x-y|^{\mu}}=s^{\frac{-Np+2N-\mu+2p}{-Np+2N+2-\mu}}
 \int_{\mathbb{R}^N}\int_{\mathbb{R}^N}\frac{|v|^p|v|^p}{|x-y|^{\mu}},
\end{align*}
for $u,v\in H^1(\mathbb{R}^N)$ satisfying $u(\cdot)=s^{\frac{2+N-\mu}{2(-Np+2N+2-\mu)}}v(s^{\frac{p-1}{-Np+2N+2-\mu}}\cdot)\in \overline{S}_{b,sa}$, which implies that
$v\in  \overline{S}_{b,a}$ if and only if $u\in \overline{S}_{b,sa}$ and thus (\ref{e10}) holds.

$(ii)$ A direct calculation gives that
\begin{align*}
|u|^2_2=|v|^2_2,\quad |\nabla u|^2_2=s^{\frac{2}{-Np+2N+2-\mu}}|\nabla v|^2_2,
 \int_{\mathbb{R}^N}\int_{\mathbb{R}^N}\frac{|u|^p|u|^p}{|x-y|^{\mu}}=s^{\frac{Np-\mu}{-Np+2N+2-\mu}}
 \int_{\mathbb{R}^N}\int_{\mathbb{R}^N}\frac{|v|^p|v|^p}{|x-y|^{\mu}},
\end{align*}
for $u,v\in H^1(\mathbb{R}^N)$ satisfying $u(\cdot)=s^{\frac{N}{2(-Np+2N+2-\mu)}}v(s^{\frac{1}{-Np+2N+2-\mu}}\cdot)\in \overline{S}_{sb,a}$ and thus (\ref{e11}) holds.

$(iii)$  From \cite{Y, CT1, Luo}, we complete of the proof of $(iii)$.

$(iv)$
According to (\ref{e10})-(\ref{e11}) and the results of $(iii)$, we easily deduce that $(iv)$ holds.
\end{proof}

Next, we discuss the existence and properties of system (\ref{e8}). For convince, let $b_i=M_i$ and $s_i=\frac{M_i^{\frac{-2}{p-1}}}{\sum^{ k}_{i=1}M_i^{\frac{-2}{p-1}}}$ where $M_i$ is given as $(Q)$.
 Then we study the existence and properties of positive solutions for the following system:
\begin{align}
\begin{cases}
-\Delta v_i+\lambda v_i=b_i\left(\int_{\mathbb{R}^N}\frac{|v_i(y)|^p}{|x-y|^{\mu}}\right)|v_i(x)|^{p-2}v_i\quad \text{in}\ \mathbb{R}^N,\\
v_i(x)>0,\ \lim\limits_{x\rightarrow \infty}v_i(x)=0,\ i=1,2,\dots, k,\\
|v_i|^2_2=s_i,\ \sum_{i=1}^{ k} s_i =1.
\end{cases}\label{e12}
\end{align}
Set $
\mathcal{S}_{k}:=\left\{s=(s_1,\cdots,s_{ k})\in (0,1)^{ k}|\sum_{i=1}^{ k}s_i=1\right\}$
and  $\sigma_s:=\sum^{ k}_{i=1}E_{b_i}(s_i)$ for $s=(s_1,\cdots,s_{ k})\in \mathcal{S}_{k}$.
From Lemma \ref{L2-3}, we know that $E_{b_i}(s_i)$ is achieved for each $i=1,\cdots, k$.
Next, we study the properties of $\sigma_s$ according to the following lemma, which plays a significant role in later chapters.

Denote
\begin{align}\label{e13}
s^0=(s_1^0,\cdots, s_k^0)=\left(\frac{b_1^{\frac{-1}{p-1}}}{\sum^{ k}_{i=1}(b_i)^{\frac{-1}{p-1}}},\cdots,
\frac{b_{ k}^{\frac{-1}{p-1}}}{\sum^{ k}_{i=1}b_{ k}^{\frac{-1}{p-1}}}\right).
\end{align}
Then we discuss the relationship between $\sigma_s$ ($s\in\mathcal{S}_{k}\setminus\ {s^0}$) and $\sigma_{s^0}$ which plays an important role in the proof of compactness.

\begin{lemma}\label{L2-4}
Let $s^0$ is given as (\ref{e13}). Then we have:

$(i)$ $\sigma_{s^0}=E_{1}(1)\left(\sum_{i=1}^{ k}(b_i)^{-\frac{1}{p-1}}\right)^{\frac{-2p+2}{-Np+2N+2-\mu}}$ and $\sigma_{s^0}$ is continuous and strictly decreasing  with respect to $b_i$, $i=1,\cdots, k$.

$(ii)$ For each $s\in \mathcal{S}_{k}\setminus\ {s^0}$,
there holds $\sigma_s<\sigma_{s^0}$ for any $p\in\left(\frac{2N-\mu}{N}, \frac{2N-\mu+2}{N}\right)$ and
$\sigma_s>\sigma_{s^0}$ for any $p\in\left(\frac{2N-\mu+2}{N},\frac{2N-\mu}{N-2}\right)$.
\end{lemma}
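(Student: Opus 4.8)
The plan is to exploit the explicit scaling formulas from Lemma \ref{L2-3} to reduce everything to the single universal constant $E_1(1)$, and then to analyze the resulting finite-dimensional optimization over the simplex $\mathcal{S}_k$. First I would prove part $(i)$. By Lemma \ref{L2-3}$(i)$ with $a=1$, one has $E_b(s)=s^{\theta}E_b(1)$ where $\theta:=\frac{-Np+2N-\mu+2p}{-Np+2N+2-\mu}$, and by Lemma \ref{L2-3}$(ii)$ with $a=1$ one has $E_b(1)=b^{\alpha}E_1(1)$ for the appropriate exponent $\alpha$ read off from (\ref{e11}). Combining these gives the clean formula $E_{b_i}(s_i)=E_1(1)\,b_i^{\alpha}s_i^{\theta}$. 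Substituting the explicit $s_i^0=b_i^{-1/(p-1)}\big/\sum_j b_j^{-1/(p-1)}$ into $\sigma_{s^0}=\sum_i E_{b_i}(s_i^0)$ and simplifying the powers of $b_i$ should collapse the sum into the stated expression $E_1(1)\big(\sum_i b_i^{-1/(p-1)}\big)^{(-2p+2)/(-Np+2N+2-\mu)}$; this is the routine bookkeeping step and I would only record that the exponents match. Continuity and strict monotonicity in each $b_i$ then follow since $b\mapsto b^{-1/(p-1)}$ is continuous and strictly decreasing and the outer exponent has a fixed sign (using Lemma \ref{L2-3}$(iii)$ to fix the sign of $E_1(1)$).

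For part $(ii)$, the key observation is that $s^0$ is precisely the minimizer (resp.\ maximizer) of the function $s\mapsto\sigma_s=E_1(1)\sum_i b_i^{\alpha}s_i^{\theta}$ on the simplex $\mathcal{S}_k$, and its criticality is exactly the Lagrange condition for the constraint $\sum_i s_i=1$. Concretely, I would treat $F(s):=\sum_i b_i^{\alpha}s_i^{\theta}$ as a function on the simplex and use the strict convexity or strict concavity of $s\mapsto s^{\theta}$ according to the position of $\theta$ relative to $1$. The pivotal sign computation is this: in the $L^2$-subcritical range $p\in\big(\frac{2N-\mu}{N},\frac{2N-\mu+2}{N}\big)$ one has $\theta\in(0,1)$ while $E_1(1)<0$, and in the supercritical range $p\in\big(\frac{2N-\mu+2}{N},\frac{2N-\mu}{N-2}\big)$ one has $\theta>1$ while $E_1(1)>0$; in either case the product $E_1(1)\cdot(\text{curvature of }s^{\theta})$ has a definite sign, forcing $s^0$ to be a strict global extremum. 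The strict Jensen-type inequality for the weighted sum of the convex/concave functions $s_i\mapsto s_i^{\theta}$, combined with the fact that $s^0$ is the unique critical point, then yields $\sigma_s<\sigma_{s^0}$ in the subcritical case and $\sigma_s>\sigma_{s^0}$ in the supercritical case for every $s\neq s^0$.

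The cleanest way to carry this out is to verify directly that $s^0$ solves the Lagrange system $\partial_{s_i}F(s)=\text{const}$, i.e.\ $\theta b_i^{\alpha}(s_i^0)^{\theta-1}$ is independent of $i$; substituting the formula for $s_i^0$ should make this constant manifestly the same for all $i$, confirming $s^0$ is the critical point. Then I would appeal to the global convexity/concavity of $F$ restricted to the affine simplex (the Hessian of $\sum_i b_i^{\alpha}s_i^{\theta}$ being diagonal with entries $\theta(\theta-1)b_i^{\alpha}s_i^{\theta-2}$ of one sign) to upgrade criticality to a strict global extremum. I expect the main obstacle to be bookkeeping the signs correctly and simultaneously: one must track the sign of $E_1(1)$ from Lemma \ref{L2-3}$(iii)$, the sign of $\theta-1$, and the sign of the exponent $\frac{-2p+2}{-Np+2N+2-\mu}$, and check that their combination flips exactly at the $L^2$-critical exponent $p=\frac{2N-\mu+2}{N}$, consistently with the direction of the claimed inequalities and with the earlier statement in Theorem \ref{T1.3}. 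Once the sign computation is pinned down, the strict extremality of $s^0$ is a one-line convexity argument, so the entire difficulty is concentrated in that sign audit rather than in any hard analysis.
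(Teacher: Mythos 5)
Your overall strategy is sound and, for part $(i)$, is exactly what the paper does: reduce $E_{b}(s)$ to $E_1(1)b^{\alpha}s^{\theta}$ via the two scalings of Lemma \ref{L2-3} and collapse the sum. For part $(ii)$ your route (show $s^0$ solves the Lagrange system for $F(s)=\sum_i b_i^{\alpha}s_i^{\theta}$ on the simplex, then invoke strict convexity/concavity) differs from the paper, which instead writes $\sigma_s=\sigma_{s^0}\sum_i s_i^{1+A_p}(s_i^0)^{-A_p}$ and proves $\sum_i s_i^{1+A_p}(s_i^0)^{-A_p}\geq 1$ by H\"older, with equality iff $s=s^0$; the two arguments are essentially equivalent (H\"older here is the integrated form of your convexity inequality), and your criticality check $\alpha=(\theta-1)/(p-1)$ does verify correctly.

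However, the ``pivotal sign computation'' you single out is wrong, and in the subcritical case fatally so. With $\theta=\frac{-Np+2N-\mu+2p}{-Np+2N+2-\mu}=1+\frac{2(p-1)}{-Np+2N+2-\mu}$, in the $L^2$-subcritical range the denominator $-Np+2N+2-\mu$ is positive and $p>1$, so $\theta>1$, not $\theta\in(0,1)$ as you claim (e.g.\ $N=3$, $\mu=1$, $p=2$ gives $\theta=3$); and in the supercritical range the denominator is negative while the numerator $2N-\mu-(N-2)p$ stays positive, so $\theta<0$, not $\theta>1$. Your claimed $\theta\in(0,1)$ would make $s\mapsto s^{\theta}$ strictly concave, hence $F$ concave and $\sigma=E_1(1)F$ convex (since $E_1(1)<0$), making $s^0$ a global \emph{minimum} and yielding $\sigma_s>\sigma_{s^0}$ -- the reverse of the lemma. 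The correct statement is that $\theta(\theta-1)>0$ in \emph{both} regimes ($\theta>1$ subcritically, $\theta<0$ supercritically), so $F$ is always strictly convex on the simplex, and the direction of the inequality is decided solely by the sign of $E_1(1)$ from Lemma \ref{L2-3}$(iii)$: negative subcritically (so $s^0$ maximizes $\sigma$) and positive supercritically (so $s^0$ minimizes $\sigma$). With that correction your argument closes and is a legitimate alternative to the paper's H\"older computation; as written, it does not.
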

\begin{proof}
$(i)$ From Lemma \ref{L2-3} $(i)-(ii)$, we can easily get this conclusion.

$(ii)$
Let $s=(s_1,\cdots,s_{ k})\in \mathcal{S}_{k}\setminus\ {s^0}$.
It follows from Lemma \ref{L2-3} $(i)-(ii)$ that
\begin{align*}
\sigma_s=\sum_{i=1}^{ k}E_{b_i}(s_i)
=&E_{1}(1)\sum_{i=1}^{ k}\left(s_i\right)^{\frac{-Np+2N-\mu+2p}{-Np+2N+2-\mu}}\left(b_i\right)^{\frac{2}{-Np+2N+2-\mu}}\\
=&E_{1}(1)\left(\sum_{i=1}^{ k}\left(b_i\right)^{-\frac{1}{p-1}}\right)^{\frac{-2p+2}{-Np+2N+2-\mu}}
\sum_{i=1}^{ k}\left(s_i\right)^{\frac{-Np+2N-\mu+2p}{-Np+2N+2-\mu}}(s^0_i)^{\frac{-2p+2}{-Np+2N+2-\mu}}\\
=&\sigma_{s^0}
\sum_{i=1}^{ k}\left(s_i\right)^{1+\frac{-(2p-2)}{-Np+2N+2-\mu}}(s^0_i)^{\frac{-(2p-2)}{-Np+2N+2-\mu}}.
\end{align*}
For convince, we define
\begin{align*}
A_{p}:=
\begin{cases}
\frac{2p-2}{-Np+2N+2-\mu},\quad &\text{if}\ p\in\left(\frac{2N-\mu}{N}, \frac{2N-\mu+2}{N}\right),\\
-1-\frac{2p-2}{-Np+2N+2-\mu},\quad &\text{if}\ p\in\left(\frac{2N-\mu+2}{N}, \frac{2N-\mu}{N-2}\right).
\end{cases}
\end{align*}
Clearly, $A_{p}>0$.
By the H\"{o}der inequality, we compute that for $p\in\left(\frac{2N-\mu}{N}, \frac{2N-\mu+2}{N}\right)$,
\begin{align*}
\left[\sum_{i=1}^{ k}(s_i)^{1+A_p}(s^0_i)^{-A_p}\right]^
{\frac{1}{1+A_p}}
=&\left[\sum_{i=1}^{ k}(s_i)^{1+A_p}(s^0_i)^{-A_p}\right]^{\frac{1}{1+A_p}}\left(\sum_{i=1}^{ k}s_i^0\right)^{A_p/(1+A_p)}\\
\geq&\sum_{i=1}^{ k}s_i (s_i^0)^{-A_p/(1+A_p)}(s_i^0)^{A_p/(1+A_p)}=1
\end{align*}
and  equality holds if and only if $s_i=s_i^0$, $i=1,2,\cdots, k$.
As for $p\in\left(\frac{2N-\mu+2}{N}, \frac{2N-\mu}{N-2}\right)$, following a similar method, we also obtain $
\left[\sum_{i=1}^{ k}(s_i^0)^{1+A_p}(s_i)^{-A_p}\right]^{\frac{1}{1+A_p}}\geq 1$
and equality holds if and only if $s_i=s_i^0$, $i=1,2,\cdots, k$. Combining with lemma \ref{L2-3} $(iii)$, we complete the proof.
\end{proof}

\begin{corollary}\label{L2-5}
Let $s^0$ is given as (\ref{e13}). Then there holds that
$(\lambda_0, (u_1,\cdots,u_{ k}))$ is the solution of system (\ref{e12}) where
$u_i\in \overline{S}_{b_i, s^0_i}$ and
\begin{align}\label{e14}
\lambda_0=\frac{2(-Np+2N-\mu+2p)}{Np-N-2-(N-\mu)}\left(\sum_{i=1}^{ k}(b_i)^{-\frac{1}{p-1}}\right)^{\frac{-2p+2}{-Np+2N+2-\mu}}E_{1}(1).
\end{align}
\end{corollary}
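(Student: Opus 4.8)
The plan is to build $(\lambda_0,(u_1,\dots,u_k))$ out of one minimizer per component and then to verify that the resulting Lagrange multipliers all agree. First I note that $s^0\in\mathcal{S}_k$, since $\sum_{i=1}^k s^0_i=1$; in particular each $s^0_i\in(0,1)$, so by Theorem \ref{T1.1} combined with Lemma \ref{L2-3} the class $\overline{S}_{b_i,s^0_i}$ is nonempty, and I fix $u_i\in\overline{S}_{b_i,s^0_i}$. Standard arguments (working with $|u_i|$ and Schwarz symmetrization, then elliptic regularity and the strong maximum principle) allow me to take $u_i>0$ with $u_i(x)\to0$ as $|x|\to\infty$. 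By Theorem \ref{T1.1} each $u_i$ is, for a suitable $\lambda_i\in\mathbb{R}$, a solution of (\ref{e9}) with $(a,b)=(s^0_i,b_i)$. Since the constraints $|u_i|^2_2=s^0_i$ and $\sum_{i=1}^k s^0_i=1$ hold by construction, what remains is to prove $\lambda_1=\dots=\lambda_k=:\lambda_0$ and to match $\lambda_0$ with (\ref{e14}).

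Next I compute each $\lambda_i$ in closed form. Set $\gamma:=\frac{-Np+2N-\mu+2p}{-Np+2N+2-\mu}$ and $\beta:=-Np+2N+2-\mu$. Testing the equation for $u_i$ against $u_i$ itself and combining the result with the Pohozaev identity $\mathcal{P}_{b_i}(u_i)=0$ eliminates the nonlocal term: it expresses both $\lambda_i s^0_i$ and $E_{b_i}(s^0_i)$ as explicit multiples of $\int|\nabla u_i|^2$, and dividing gives the relation $\lambda_i s^0_i=-2\gamma\,E_{b_i}(s^0_i)$. The coefficient is exactly what the scaling $E_{b_i}(sa)=s^{\gamma}E_{b_i}(a)$ of Lemma \ref{L2-3}(i) predicts, since that scaling yields $\frac{d}{da}E_{b_i}(a)=\gamma E_{b_i}(a)/a$, while the Lagrange structure of the constrained problem forces $\frac{d}{da}E_{b_i}(a)|_{a=s^0_i}=-\lambda_i/2$; the Nehari--Pohozaev route is preferable here because it produces the identity for the concrete minimizer $u_i$ without any smooth dependence of $a\mapsto u_a$.

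Then I insert the explicit energies. By Lemma \ref{L2-3}(i)--(ii) one has $E_{b_i}(s^0_i)=(s^0_i)^{\gamma}(b_i)^{2/\beta}E_1(1)$, whence $\lambda_i=-2\gamma\,(s^0_i)^{\gamma-1}(b_i)^{2/\beta}E_1(1)$. The decisive algebraic fact is that the defining split $s^0_i=(b_i)^{-\frac{1}{p-1}}/T$, with $T:=\sum_{j=1}^k (b_j)^{-\frac{1}{p-1}}$, together with $\gamma-1=\frac{2(p-1)}{\beta}$, makes $(s^0_i)^{\gamma-1}(b_i)^{2/\beta}=T^{-(\gamma-1)}$ independent of $i$. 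Hence $\lambda_i=-2\gamma\,T^{-(\gamma-1)}E_1(1)=-2\gamma\,\sigma_{s^0}$ for every $i$, so the multipliers coincide; writing $-2\gamma=\frac{2(-Np+2N-\mu+2p)}{Np-N-2-(N-\mu)}$ and recalling $\sigma_{s^0}=T^{(-2p+2)/\beta}E_1(1)$ from Lemma \ref{L2-4}(i) reproduces (\ref{e14}). The same conclusion can be read off Lemma \ref{L2-4}(ii): $s^0$ is an interior extremum of $\sigma_s=\sum_i E_{b_i}(s_i)$ on $\mathcal{S}_k$, so its Lagrange condition makes $\frac{d}{ds_i}E_{b_i}(s^0_i)=-\lambda_i/2$ the same for all $i$.

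The main obstacle is precisely this coincidence of the multipliers: each $u_i$ is only known to solve its own equation with its own $\lambda_i$, and a family of independently chosen minimizers has no reason to form a solution of the coupled system (\ref{e12}), which insists on one shared $\lambda_0$. What rescues the argument is the distinguished mass distribution $s^0_i\propto (b_i)^{-1/(p-1)}$: it is exactly at this point that the $b_i$-dependence of $(s^0_i)^{\gamma-1}(b_i)^{2/\beta}$ cancels, and, equivalently, it is the point at which $\sigma_s$ is extremal in Lemma \ref{L2-4}. A minor technical care I would take is to avoid invoking differentiability or uniqueness of the minimizing family, which is why I extract $\lambda_i s^0_i=-2\gamma E_{b_i}(s^0_i)$ directly from the Nehari and Pohozaev identities rather than from an envelope-theorem argument.
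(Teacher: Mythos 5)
Your proposal is correct and follows essentially the route the paper itself relies on: the paper states this corollary without proof, but the key identity $\lambda s=-2\gamma E_{b_i}(s_i^0)$ (in the paper's notation, $\lambda=-A\,E_{b_i}(u_i)/s_i$) appears in the proof of Lemma \ref{L2-6}, and your scaling computation showing $(s_i^0)^{\gamma-1}(b_i)^{2/\beta}$ is independent of $i$ is exactly the calculation behind Lemma \ref{L2-4}(i). Your explicit verification that the multipliers coincide for \emph{any} choice of minimizers $u_i\in\overline{S}_{b_i,s_i^0}$ is the right way to fill in the details the paper omits.
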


The solution of  system (\ref{e12}) is not necessarily unique.  Therefore, the following lemma is needed.

\begin{lemma}\label{L2-6}
Let $(\lambda, (u_1,\cdots,u_{ k}))$ is the solution of system $(\ref{e12})$. Then one has $\lambda\leq \lambda_0$ and $\sum_{i=1}^{ k}E_{b_i}(u_i)\geq \sigma_{s^0}$. In particular, equalities hold if and only if
 $\lambda= \lambda_0$ and $u_i\in \overline{S}_{b_i, s^0_i}$, $i=1,\cdots,k$.
 where $s_i^0$, $\lambda_0$ are given as (\ref{e13}) and (\ref{e14}) respectively.
\end{lemma}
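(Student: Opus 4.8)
The plan is to reduce both assertions to a single comparison between the multipliers $\lambda$ and $\lambda_0$, and then settle that comparison by a monotonicity argument. First I would record the two pointwise identities satisfied by each component. Testing the $i$-th equation of (\ref{e12}) against $u_i$ gives the Nehari relation $\int_{\mathbb{R}^N}|\nabla u_i|^2\,dx+\lambda s_i=b_i\int_{\mathbb{R}^N}\int_{\mathbb{R}^N}\frac{|u_i(x)|^p|u_i(y)|^p}{|x-y|^{\mu}}\,dx\,dy$, while the Pohozaev identity is encoded by the constraint $\mathcal{P}_{b_i}(u_i)=0$. Eliminating the gradient term and the nonlocal term between these two identities expresses the energy of each bump purely through $\lambda$ and its mass, namely $E_{b_i}(u_i)=-\frac{1}{2\theta}\lambda s_i$, where $\theta=\frac{-Np+2N-\mu+2p}{-Np+2N+2-\mu}$ is the scaling exponent appearing in (\ref{e10}); crucially the proportionality constant is the same for every $i$. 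Summing over $i$ and using $\sum_{i=1}^{k}s_i=1$ yields $\sum_{i=1}^{k}E_{b_i}(u_i)=-\frac{1}{2\theta}\lambda$. Running the identical computation on the ground-state vector solution of Corollary \ref{L2-5} gives $\sigma_{s^0}=-\frac{1}{2\theta}\lambda_0$, which one checks is exactly the content of (\ref{e14}). Thus the two target statements become equivalent to comparing $\lambda$ with $\lambda_0$, with the direction governed by the sign of $\theta$.

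Next I would pin down that comparison. Since $u_i\in S_{b_i,s_i}$, minimality of $E_{b_i}(s_i)=\inf_{S_{b_i,s_i}}E_{b_i}$ gives $E_{b_i}(u_i)\ge E_{b_i}(s_i)$ for each $i$. Using the scaling law $E_{b_i}(a)=E_{1}(1)\,b_i^{\,2/(-Np+2N+2-\mu)}a^{\theta}$ that follows from Lemma \ref{L2-3}, together with the identity of the first step applied both to $u_i$ and to the ground state at mass $s_i$, this inequality translates into a comparison of $\lambda$ with the ground-state multiplier at mass $s_i$. Introducing the ground-state mass $m_i(\nu)$ that realizes a given multiplier $\nu$ (the inverse of the strictly monotone mass--multiplier relation), I would deduce $m_i(\lambda)\le s_i$ for every $i$. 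Summing and using $\sum_i s_i=1=\sum_i s_i^0=\sum_i m_i(\lambda_0)$ gives $\sum_i m_i(\lambda)\le\sum_i m_i(\lambda_0)$, and since $\nu\mapsto\sum_i m_i(\nu)$ is strictly monotone the comparison of $\lambda$ and $\lambda_0$ follows; feeding it back into the first step produces $\sum_{i}E_{b_i}(u_i)\ge\sigma_{s^0}$.

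Finally, equality is traced through the chain: $\sum_i m_i(\lambda)=\sum_i m_i(\lambda_0)$ forces $\lambda=\lambda_0$ and $m_i(\lambda)=s_i$ for each $i$, i.e. equality in $E_{b_i}(u_i)\ge E_{b_i}(s_i)$ together with $s_i=m_i(\lambda_0)=s_i^0$; hence each $u_i$ is a minimizer at mass $s_i^0$, that is $u_i\in\overline{S}_{b_i,s_i^0}$, and the converse is Corollary \ref{L2-5}. I expect the main obstacle to be precisely the non-uniqueness of solutions of (\ref{e12}) flagged before the statement: because the $u_i$ need not be minimizers, one cannot read $\lambda$ off from $s_i$ directly, so the argument must instead exploit the single shared multiplier, the monotone mass function $m_i$, and the normalization $\sum_i s_i=1$. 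A secondary but essential point is careful sign bookkeeping across the two ranges of $p$ — the signs of $\theta$, of $E_1(1)$ (Lemma \ref{L2-3}$(iii)$), and the direction of the strict inequality in Lemma \ref{L2-4} — which is what aligns the final comparison of $\lambda$ and $\lambda_0$ with the monotonicity recorded there.
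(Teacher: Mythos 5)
Your proposal is correct and follows essentially the same route as the paper: the paper's proof also combines the Nehari and Pohozaev identities into $\lambda s_i=-A\,E_{b_i}(u_i)$ with $A=\frac{2(-Np+2N-\mu+2p)}{-Np+2N+2-\mu}$ (your $2\theta$), uses the minimality $E_{b_i}(u_i)\ge E_{b_i}(s_i)$ together with the scaling law $E_{b_i}(s_i)=E_1(1)s_i^{\theta}b_i^{2/(-Np+2N+2-\mu)}$, and then compares with the common value at $s^0$; summing over $i$ and $\sum_i s_i=1$ give $\sum_i E_{b_i}(u_i)=-\lambda/A$ versus $\sigma_{s^0}=-\lambda_0/A$ exactly as you do. The only differences are cosmetic: where you invert the mass--multiplier relation and sum the $m_i(\lambda)\le s_i$, the paper simply observes that some $s_i\le s_i^0$ and takes the minimum of the bounds $\lambda\le\nu_i(s_i)$; and in the $L^2$-supercritical range the paper bypasses the multiplier altogether, invoking $\sum_iE_{b_i}(s_i)=\sigma_s\ge\sigma_{s^0}$ from Lemma \ref{L2-4}$(ii)$, whereas your argument is uniform in $p$. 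One point worth making explicit: carried out with the correct signs ($\theta<0$, $E_1(1)>0$, $\nu_i$ decreasing) your scheme yields $\lambda\ge\lambda_0$ in the supercritical case, not $\lambda\le\lambda_0$ as the lemma literally states; the energy inequality $\sum_iE_{b_i}(u_i)\ge\sigma_{s^0}$ and the equality characterization (the parts actually used in Proposition \ref{L3.1}) come out correctly either way, so this exposes a misprint in the statement rather than a flaw in your argument.
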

\begin{proof}
Let $|u_i|^2_2=s_i$, $\sum\limits_{i=1}^{ k}s_i=1$ for each $i=1,\cdots, k$. For convince, we denote $
A=\frac{2(-Np+2N-\mu+2p)}{-Np+2N+2-\mu}$.
If $p\in\left(\frac{2N-\mu}{N},\frac{2N-\mu+2}{N}\right)$, then $A>0$ and
a direct calculation gives that
\begin{align*}
\lambda=-A\frac{E_{b_i}(u_i)}{s_i}
\leq-A\frac{E_{b_i}(s_i)}{s_i}=-AE_{1}(1)
(s_i)^{\frac{2p-2}{-Np+2N+2-\mu}}(b_i)^{\frac{-2}{-Np+2N+2-\mu}},
\end{align*}
where $i=1,\cdots, k$.
If $s_i\equiv s_i^0, i=1,\cdots, k$, the proof is completed. If $s_i\not\equiv s_i^0, i=1,\cdots, k$, there exists $i$ such that $s_i<s_i^0$. Clearly, there holds
\begin{align*}
\min\left\{(s_i)^{\frac{2p-2}{-Np+2N+2-\mu}}(b_i)^{\frac{-2}{-Np+2N+2-\mu}},\ i=1,\cdots, k\right\}<(s^0_i)^{\frac{2p-2}{-Np+2N+2-\mu}}(b_i)^{\frac{-2}{-Np+2N+2-\mu}}
 \end{align*}
which implies $\lambda<\lambda_0$ and $\sum\limits_{i=1}^{ k}E_{b_i}(u_i)>
\sum\limits_{i=1}^{ k}E_{b_i}(s_i^0)=\sigma_{s^0}$.
If $p\in \left(\frac{2N-\mu+2}{N},\frac{2N-\mu}{N-2}\right)$, it follows from Lemma \ref{L2-4} $(ii)$ that the conclusion is true.
\end{proof}

Denote $\mathcal{S}=\overline{S}_{b_1, s_1^0}\times\cdots\times\overline{S}_{b_{ k}, s_{ k}^0}$ be the set of normalized ground state  solutions of system (\ref{e12}), where  $s_1^0,\cdots, s_k^0$ is given as (\ref{e13}). For each $\mathbf{U}=(u_1,\cdots,u_{ k})\in \mathcal{S}$, without loss of generality, we suppose $u_i(0)=\max\limits_{x\in\mathbb{R}^N}u_i(x)$. Then we have the following conclusion.

\begin{lemma}\label{L2-7}

Let $\mathcal{S}=\overline{S}_{b_1, s_1^0}\times\cdots\times\overline{S}_{b_{ k}, s_{ k}^0}$ where $s_i^0$ is given as (\ref{e13}). Then we have that
$\mathcal{S}$ is a bounded set and compact in $(H^1(\mathbb{R}^N))^{ k}$.
\end{lemma}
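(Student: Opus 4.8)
The plan is to reduce to a single factor and then run a concentration--compactness argument. Since $\mathcal{S}=\overline{S}_{b_1,s_1^0}\times\cdots\times\overline{S}_{b_k,s_k^0}$ is a finite Cartesian product, it is bounded (resp. sequentially compact) in $(H^1(\mathbb{R}^N))^k$ as soon as each factor $\overline{S}_{b_i,s_i^0}$ is bounded (resp. compact) in $H^1(\mathbb{R}^N)$: boundedness of a product of bounded sets is immediate, and a convergent subsequence of a sequence in the product is obtained by extracting successively in each of the $k$ coordinates. Thus it suffices to fix $a,b>0$ and prove that $\overline{S}_{b,a}$, normalized by $u(0)=\max_{x\in\mathbb{R}^N}u(x)$, is bounded and compact.

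Boundedness is essentially free. Every $u\in S_{b,a}$ satisfies $\mathcal{P}_b(u)=0$; using this relation to eliminate the nonlocal term from $E_b$ shows that on $S_{b,a}$ one has $E_b(u)=\kappa\,\|\nabla u\|_2^2$ for a constant $\kappa=\kappa(N,p,\mu)$ which is nonzero precisely because $p\neq\frac{2N-\mu+2}{N}$. Hence for $u\in\overline{S}_{b,a}$, where $E_b(u)=E_b(a)$, the Dirichlet energy $\|\nabla u\|_2^2=E_b(a)/\kappa$ is a fixed constant, while $|u|_2^2=a$ by definition, so $\|u\|_{H^1}^2=a+E_b(a)/\kappa$ is constant on $\overline{S}_{b,a}$, which is therefore bounded. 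The same relation fixes $\mathcal{D}(u):=\int_{\mathbb{R}^N}\int_{\mathbb{R}^N}\frac{|u(x)|^p|u(y)|^p}{|x-y|^{\mu}}$ to a positive constant, a fact I will use below.

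For compactness, take any sequence $(u_n)\subset\overline{S}_{b,a}$; by the above it is bounded in $H^1(\mathbb{R}^N)$ and is a minimizing sequence for $E_b$ on $S_{b,a}$. I would apply Lions' concentration--compactness lemma to $|u_n|^2$ and exclude the two bad alternatives. Vanishing is impossible: if $\sup_{y}\int_{B_1(y)}|u_n|^2\to0$, then $u_n\to0$ in $L^q$ for $q\in(2,2^{\ast})$ and, by Proposition \ref{L2-1}, $\mathcal{D}(u_n)\to0$, contradicting that $\mathcal{D}(u_n)$ equals a positive constant. Dichotomy is ruled out by strict subadditivity of $a\mapsto E_b(a)$: the scaling identity (\ref{e10}) gives $E_b(a)=E_b(1)a^{\theta}$ with $\theta=\frac{-Np+2N-\mu+2p}{-Np+2N+2-\mu}$, and combining the sign of $E_b(1)$ from Lemma \ref{L2-3}$(iii)$ with the position of $\theta$ relative to $1$ yields $E_b(a)<E_b(a_1)+E_b(a_2)$ whenever $a_1+a_2=a$ and $a_1,a_2\in(0,a)$ (the one-component version of the strict inequalities in Lemma \ref{L2-4}). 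Since the nonlocal cross term between two far-apart pieces vanishes by Proposition \ref{L2-2}, a genuine splitting of the mass would force $E_b(a)\geq E_b(a_1)+E_b(a_2)$, a contradiction. Hence the compactness alternative holds: there are translations $y_n$ with $u_n(\cdot-y_n)\to u$ strongly in $L^2$, and since $(u_n)$ is bounded in $H^1$ with $|u|_2^2=a$ and $E_b$ weakly lower semicontinuous, $u$ attains $E_b(a)$, so $u\in\overline{S}_{b,a}$ and the convergence is in fact strong in $H^1$.

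It remains to remove the translations using the normalization $u_n(0)=\max_{x}u_n(x)$, and this, together with the dichotomy step, is the part I expect to be delicate. All $u_n$ solve the same Euler--Lagrange equation $-\Delta u_n+\lambda_\ast u_n=b\left(\int_{\mathbb{R}^N}\frac{|u_n(y)|^p}{|x-y|^{\mu}}dy\right)|u_n|^{p-2}u_n$ with a common Lagrange multiplier $\lambda_\ast=\lambda_\ast(a,b)$ determined exactly as in the computation in the proof of Lemma \ref{L2-6}; by the regularity theory for Choquard equations the $u_n$ are then uniformly bounded in $L^\infty$ and $u_n(\cdot-y_n)\to u$ locally uniformly. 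Since $u\not\equiv0$ decays at infinity its maximum is attained at a finite point, and the requirement that $u_n$ attains its maximum at the origin forces $(y_n)$ to stay bounded; passing to a subsequence $y_n\to y_0$ gives $u_n\to u(\cdot+y_0)$ strongly in $H^1(\mathbb{R}^N)$, with limit again in $\overline{S}_{b,a}$. The main obstacle is precisely this interplay of non-uniqueness and translation invariance: in the sublinear range $p<2$ the decay and regularity of the ground states are only weakly controlled, so establishing the uniform $L^\infty$ bound and locally uniform convergence needed to pin down $(y_n)$ must rely on the Choquard regularity estimates rather than the exponential decay available when $p\geq2$.
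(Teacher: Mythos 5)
Your route --- Lions' concentration--compactness applied to $|u_n|^2$, followed by removal of the translations via the normalization $u_n(0)=\max u_n$ --- is genuinely different from the paper's, which never introduces translations at all: since every element of $\mathcal{S}$ is already centered at its maximum, the paper passes directly to the weak limit, rules out a zero limit by combining the normalization with the interpolation estimate for the Riesz term and the identity $\mathcal{P}_{b_i}(u_n^i)=0$, and then prevents loss of mass by a Brezis--Lieb splitting of both $E_{b_i}$ and $\mathcal{P}_{b_i}$, using strict subadditivity in the subcritical case and strict monotonicity of $a\mapsto E_b(a)$ in the supercritical case. Your boundedness step and your subcritical compactness argument are sound.

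The genuine gap is the dichotomy step in the $L^2$-supercritical range $p\in\left(\frac{2N-\mu+2}{N},\frac{2N-\mu}{N-2}\right)$. There $E_b(a)$ is the infimum of $E_b$ over $S_{b,a}=\{|u|_2^2=a,\ \mathcal{P}_b(u)=0\}$, while the infimum over the mass sphere alone is $-\infty$. If $(u_n)$ dichotomizes into far-apart pieces $u_n^1,u_n^2$ of masses $a_1,a_2$, these pieces do not individually satisfy $\mathcal{P}_b=0$; one only knows $\mathcal{P}_b(u_n^1)+\mathcal{P}_b(u_n^2)\to0$. Hence the inequality $E_b(u_n^j)\ge E_b(a_j)-o(1)$ --- which is precisely what converts the strict subadditivity $E_b(a)<E_b(a_1)+E_b(a_2)$ into a contradiction --- is not available, and without it your "genuine splitting forces $E_b(a)\ge E_b(a_1)+E_b(a_2)$" does not follow. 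Closing this would require projecting each piece back onto its Pohozaev manifold and controlling the resulting change of energy; the paper sidesteps exactly this by splitting $\mathcal{P}_{b_i}$ via Brezis--Lieb to get $\mathcal{P}_{b_i}(v_n^i)\to 0$, hence $\liminf_n E_{b_i}(v_n^i)\ge 0$, and then invoking only the strict monotonicity of $E_b(\cdot)$ from Lemma \ref{L2-3} $(iv)$ rather than subadditivity. The remaining ingredients of your proof (exclusion of vanishing via the fixed positive value of the nonlocal term, and the removal of translations using the common Lagrange multiplier, uniform $L^\infty$ bounds and uniform decay) are correct, though the translation step costs you regularity machinery that the paper's already-centered sequence does not need.
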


\begin{proof}
Firstly, we claim that $\mathcal{S}$ is a bounded set in $(H^1(\mathbb{R}^N))^{ k}$.
 Let $(u_1,\cdots,u_{ k})\in \mathcal{S}$, then we prove that $u_i$ is bounded in $H^1(\mathbb{R}^N)$ for each $i=1,\cdots, k$. Clearly, $u_i\in \overline{S}_{b, s_i^0}$. This indicates
\begin{align*}
&E_{b_i}(s_i^0)=\frac{1}{2}\int_{\mathbb{R}^N}|\nabla u_i|^2-\frac{b_i}{2p}\int_{\mathbb{R}^N}\int_{\mathbb{R}^N}\frac{|u_i(x)|^p|u_i(y)|^p}{|x-y|^{\mu}},\\
&\mathcal{P}_b(u_i)=\int_{\mathbb{R}^N}|\nabla u_i|^2-\frac{b_i(Np-\mu)}{2p}\int_{\mathbb{R}^N}\int_{\mathbb{R}^N}\frac{|u_i(x)|^p|u_i(y)|^p}{|x-y|^{\mu}}=0.
\end{align*}
By calculating, we have $\|u_i\|^2=\frac{2(Np-\mu)}{Np-\mu-2}E_{b_i}(s_i^0)+s_i^0$. Hence, we obtain $u_i$ is bounded in $H^1(\mathbb{R}^N)$ for each $i=1,\cdots, k$.

Now, we claim that $\mathcal{S}$ is compact in $(H^1(\mathbb{R}^N))^{ k}$.
If $\mathcal{S}$ contains only finite elements, then $\mathcal{S}$ is compact in $(H^1(\mathbb{R}^N))^{ k}$.
 Otherwise, taking a sequence $\{\mathbf{U_n}\}\in  \mathcal{S}$, then $\{\mathbf{U_n}\}$ is bounded in $(H^1(\mathbb{R}^N))^{ k}$. Up to a subsequence, there exists
 $\mathbf{U}$ such that $\mathbf{U_n}\rightharpoonup\mathbf{U}$ weakly in $(H^1(\mathbb{R}^N))^{ k}$.
 Let $\mathbf{U_n}=(u_{n}^1,\cdots, u_n^{ k})$ and $\mathbf{U}=(u^1,\cdots, u^{ k})$, one has
$u^i_n\rightharpoonup u^i,\ \text{weakly in}\ H^1(\mathbb{R}^N)$ and
  \begin{align*}
 -\Delta u^i+\lambda_0 u^i=b_i\left(\int_{\mathbb{R}^N}\frac{|u^i|^p}{|x-y|^{\mu}}\right)|u^i(x)|^{p-2}u^i,\ \text{ in}\ H^1(\mathbb{R}^N),
 \end{align*}
 for each $i=1,\cdots, k$. Using the Maximum Principle, we obtain that $u^i=0$ or $u^i>0$ for each $i=1,\cdots, k$.
If there exists $i$ such that $u^i=0$, we deduce that $u^i(0)\rightarrow 0$ as $n\rightarrow\infty$.
Then it follows from $\mathcal{P}_{b_i}(u_n^i)=0$ that
\begin{align*}
\int_{\mathbb{R}^N}|\nabla u_n^i|^2=&\frac{b_i(Np-\mu)}{2p}\int_{\mathbb{R}^N}\int_{\mathbb{R}^N}\frac{|u_n^i(x)|^p|u_n^i(y)|^p}{|x-y|^{\mu}}
\leq C\left(\int_{\mathbb{R}^N}|u_n^i|^{2Np/(2N-\mu)}\right)^{(2N-\mu)/N}\\
\leq&C\max\limits_{x\in \mathbb{R}^N}\{|u_n^i|^{2(NP-\mu)/N}\}\left(\int_{\mathbb{R}^N}|u_n^i|^2\right)^{(2N-\mu)/N}\rightarrow 0,
\end{align*}
which is a contradiction. Hence, $u^i>0$.

Next, we prove $v^i_n=u_n^i-u^i\rightarrow0$ in $H^1(\mathbb{R}^N)$ for $i=1,\cdots, k$. By Brezis-Lieb lemma, we have $
s^0_i=\lim\limits_{n\rightarrow\infty}\|v_n^i+u^i\|^2_{L^2(\mathbb{R}^N)}=\|u^i\|^2_{L^2(\mathbb{R}^N)}+\lim\limits_{n\rightarrow
\infty}\|v_n^i\|^2_{L^2(\mathbb{R}^N)}$
and
\begin{align}
 E_{b_i}(s^0_i)=
 \lim\limits_{n\rightarrow+\infty}E_{b_i}(v_n^i)+E_{b_i}(u^i).\label{e15}
 \end{align}
For convince, we define $|u^i|^2_2=s_i$.
If $p\in (\frac{2N-\mu}{N}, \frac{2N-\mu+2}{2})$, it follows from \cite[Lemma 2.5]{Y} that $E_{b_i}(c)<E_{b_i}(c-a)+E_{b_i}(a)$ for any $0<a<c<+\infty$ .
 From (\ref{e15}) we have
 \begin{align*}
 E_{b_i}(s^0_i)=\lim\limits_{n\rightarrow+\infty}E_{b_i}(v_n^i)+E_{b_i}(u^i)\geq  E_{b_i}(s^0_i-s_i)+ E_{b_i}(s_i),
 \end{align*}
which implies $\lim\limits_{n\rightarrow+\infty}E_{b_i}(v_n^i)=0$ and $|u^i|^2_{2}=s^0_i$. Hence we deduce
 $v^i_n\rightarrow0$ in $H^1(\mathbb{R}^N)$ and $u^i\in\overline{S}_{b_i,s_i^0}$.
If $p\in (\frac{2N-\mu+2}{2},\frac{2N-\mu}{N-2})$,  it follows from Lemma \ref{L2-3} $(iv)$ that we have $E_{b_i}(c)<E_{b_i}(a)$ for any $0<a<c$.
Similar to the proof of equality (\ref{e15}), and combining with $\mathcal{P}_{b_i}(u^i)=0$, we have
$0=\lim\limits_{n\rightarrow+\infty}\mathcal{P}_{b_i}(u_n^i)=\mathcal{P}_{b_i}(u^i)+\lim\limits_{n\rightarrow+\infty}\mathcal{P}_{b_i}(v_n^i)=\lim\limits_{n\rightarrow+\infty}
\mathcal{P}_{b_i}(v_n^i)$,
which implies that $\lim\limits_{n\rightarrow+\infty}E_{b_i}(v_n^i)\geq0$.
 Since $\lim\limits_{n\rightarrow+\infty}E_{b_i}(v_n^i)\geq0$ and $u^i\in  S_{s_i,b_i}$, it follows from (\ref{e15}) that
\begin{equation*}
E_{b_i}(s^0_i)=\lim\limits_{n\rightarrow+\infty}E_{b_i}(v_n^i)+E_{b_i}(u^i)\geq E_{b_i}(u^i)\geq E_{b_i}(s_i)\geq E_{b_i}(s^0_i).
\end{equation*}
This indicates that $\lim\limits_{n\rightarrow+\infty}E_{b_i}(v_n^i)=0$ and $|u^i|^2_2=s^0_i$.
 Hence we deduce $v_n\rightarrow 0$ in $H^1(\mathbb{R}^N)$ and $u^i\in\overline{S}_{b_i,s_i^0}$.

From above results, we deduce that $\mathbf{U_n}\rightarrow\mathbf{U}$  in $(H^1(\mathbb{R}^N))^{ k}$ and $\mathbf{U}\in\mathcal{S}$.
\end{proof}

\textbf{Proof of Theorem \ref{T1.3}:} It is a direct results of corollary \ref{L2-5} and Lemma \ref{L2-6}.

\section{The existence of normalized multi-bump solutions}
~~~~In this section, we mainly prove the existence of normalized multi-bump solutions for problem (\ref{e7}).
Define the energy functional $E_{\epsilon}:H^1(\mathbb{R}^N)\rightarrow\mathbb{R}$
is given by
\begin{equation*}
E_{\epsilon}(u):=\frac{1}{2}\int_{\mathbb{R}^N}|\nabla u|^2-\frac{1}{2p}\int_{\mathbb{R}^N}\int_{\mathbb{R}^N}\frac{Q(\epsilon x)Q(\epsilon y)|u(x)|^p|u(y)|^p}{|x-y|^{\mu}}.
\end{equation*}
To find the solutions of problem (\ref{e7}) via a variational method, we need to
look for the critical point of $E_{\epsilon}$ restricted on the constraint
 $ S_1:=\left\{u\in H^1(\mathbb{R}^N)| |u|^2_2=1\right\}$.

\subsection{The concentration-compactness type result}
~~~~We first give some notations.
For the sake of convenience, let
 $b_i=M_i^2$, $i=1,\cdots, k$, where $M_1,\cdots, M_{ k}$ are the positive numbers given in $(Q)$ and
 \begin{align}\label{e16}
 \sigma_0:=\sum_{i=1}^{ k}E_{b_i}(u_i),
  \end{align}
where $(u_1,\cdots,u_{ k})\in\mathcal{S}$. Clearly, $\sigma_0=\sigma_{s^0}$.
For $\tau>0$, let $(\mathcal{A}_i)^{\tau}=\left\{x\in\mathbb{R}^N: dist(x, \mathcal{A}_i)<\tau\right\}$ be a $\tau$-neighbourhood of $\mathcal{A}_i$ for $i=1,\cdots, k$.
Then we choose $\tau>0$ small such that $(\mathcal{A}_i)^{2\tau}\subset\Omega_i$ for $i=1,\cdots, k$.
Fix a cutoff function $\zeta\in C^{\infty}_{0}(\mathbb{R}^N,[0,1])$ such that $0\leq \zeta\leq1$, $\zeta(x)=1$ for $|x|<\tau$ and $\zeta(x)=0$ for $|x|\geq 2\tau$. For convince, we define
$\zeta_{\epsilon}(y)=\zeta(\epsilon y)$, $y\in\mathbb{R}^N$, $(\mathcal{A}_{\epsilon,i})^{\tau}=\frac{1}{\epsilon}(\mathcal{A}_{i})^{\tau}$, $(\mathcal{A}_{\epsilon,i})^{2\tau}=\frac{1}{\epsilon}(\mathcal{A}_{i})^{2\tau}$ and $\Omega_{\epsilon,i}=\frac{1}{\epsilon}\Omega_{i}$ for $i=1,\cdots, k$.
Set
\begin{equation*}
D^{\epsilon}=\left\{\sum_{i=1}^{ k}\zeta_{\epsilon}(x-x_i/{\epsilon})u_i(x-x_i/\epsilon)\bigg| x_i\in(\mathcal{A}_i)^{\tau},u_i\in \overline{S}_{b_i, s_i^0}\right\}
\end{equation*}
and $
D^{\epsilon}_d:=\left\{u\in  S_1\bigg| \inf\limits_{v\in D^{\epsilon}}\|u-v\|\leq d\right\}$
where $0<d<\frac{1}{2}\min\limits_{1\leq i\leq k}\|u_i\|_{L^2(B_1(0))}$,
We will find the solutions to problem (\ref{e7}) in $D^{\epsilon}_d$ for sufficiently small $\epsilon, d>0$.

We denote $
W(u)=\frac{1}{2}|u|^2_2,\ u\in H^1(\mathbb{R}^N)$.
Then we present the following $\epsilon$-dependent concentration-compactness type result, which will give a useful gradient estimate later in Corollary \ref{L3.2}.

\begin{proposition}\label{L3.1}
Suppose that $(Q)$ holds.
 Let $d>0$ and sequences $\{\epsilon_n\}\subset\mathbb{R}^+$, $\{u_n\}\subset D^{\epsilon_n}_{d}$ satisfying
\begin{align}
&\epsilon_n\rightarrow 0^+,\quad \limsup\limits_{n\rightarrow\infty}E_{\epsilon_n}(u_n)\leq \sigma_0, \label{e17} \\
&\|E'_{\epsilon_n}(u_n)+\lambda_nW'(u_n)\|_{H^{-1}}\rightarrow 0.\label{e18}
\end{align}
Then there holds $\lambda_n\rightarrow \lambda_0$, $\lim\limits_{n\rightarrow\infty}E_{\epsilon}(u_n)=\sigma_0$ and
\begin{align}
\|u_n-\sum \zeta_{\epsilon_n}(x-\widetilde{x}_{n}^{i})u_i(x-\widetilde{x}_{n}^{i})\|\rightarrow 0\ \text{and}\ dist(x_{n}^{i}, \mathcal{A}_i)\rightarrow0,\label{e19}
\end{align}
for some $\widetilde{x}_{n}^{i}=\frac{x_{n}^{i}}{\epsilon_n}$ and $u_i\in \overline{S}_{b_i, s_i^0}$, $i=1,\cdots, k$.
Here, $\lambda_0$ and $\sigma_0$ are shown as (\ref{e14}) and (\ref{e16}) respectively.
\end{proposition}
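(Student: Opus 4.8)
My strategy is a concentration-compactness argument for the constrained sequence $\{u_n\}\subset D^{\epsilon_n}_d$, exploiting that each $u_n$ lives within distance $d$ of the model set $D^{\epsilon_n}$ built from ground states localized near the sets $\mathcal A_i$. The guiding principle is that the condition $\limsup E_{\epsilon_n}(u_n)\le\sigma_0$ forces the energy to split into exactly $k$ pieces that each reach the limit-problem ground-state level, leaving no mass to escape to infinity or to be dissipated by the cutoff $\zeta_{\epsilon_n}$. I would carry the argument out in the scaled variables $x\mapsto x/\epsilon_n$, where the potential $Q(\epsilon_n x)$ becomes locally constant near each concentration region.

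\textbf{Step 1: locating the bumps and boundedness.} First I would use $u_n\in D^{\epsilon_n}_d$ to write $u_n=\sum_{i=1}^k \zeta_{\epsilon_n}(\cdot-x_n^i/\epsilon_n)u_n^i+o_d(1)$, extracting profiles $u_n^i$ and centers $x_n^i\in(\mathcal A_i)^\tau$. Boundedness of $\{u_n\}$ in $H^1$ follows exactly as in Lemma \ref{L2-7}: combine the energy bound \eqref{e17} with the near-Pohozaev identity coming from \eqref{e18} to control $\|u_n\|^2$, since $|u_n|_2^2=1$ is fixed. Because $(\mathcal A_i)^{2\tau}\subset\Omega_i$ are mutually disjoint, the supports are separated by distance $\gtrsim 1/\epsilon_n\to\infty$, so Proposition \ref{L2-2} makes the cross interactions between different bumps negligible, and the Gagliardo--Nirenberg / Hardy--Littlewood--Sobolev inequality (Proposition \ref{L2-1}) controls each nonlocal term.

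\textbf{Step 2: the Lagrange multiplier and passing to the limit.} Testing \eqref{e18} against $u_n$ gives $\lambda_n=-(E'_{\epsilon_n}(u_n),u_n)/|u_n|_2^2+o(1)$, which is bounded; after translating each bump to the origin via $\widetilde x_n^i=x_n^i/\epsilon_n$, I would pass to weak limits $u_n^i\rightharpoonup u^i$. Since $Q(\epsilon_n x)\to Q(x_0^i)\le M_i$ locally near each center and $x_n^i\to$ some limit in $\overline{(\mathcal A_i)^\tau}$, the weak limit $u^i$ solves the limit equation with coefficient $b_i=M_i^2$ and multiplier $\lambda_0:=\lim\lambda_n$. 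The decisive inequality is the energy lower bound from Lemma \ref{L2-6}: any solution of the system \eqref{e12} satisfies $\sum_i E_{b_i}(u^i)\ge\sigma_{s^0}=\sigma_0$, with equality forcing $u^i\in\overline S_{b_i,s_i^0}$ and $\lambda_0=\lambda_0$ of \eqref{e14}. Combined with the upper bound \eqref{e17}, Brezis--Lieb splitting (as in Lemma \ref{L2-7}) shows no energy or mass is lost, hence strong convergence $u_n^i\to u^i$ in $H^1$, giving \eqref{e19}; the fact that $Q(\epsilon_n x_n^i)\to M_i$ forces $\mathrm{dist}(x_n^i,\mathcal A_i)\to0$.

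\textbf{The main obstacle.} The hard part is the split-and-no-loss step in the sublinear regime $p<2$, precisely where the penalization approach of \cite{ZZ} breaks down. Because exponential decay of the bound states is unavailable, I cannot cut off tails cheaply; instead I must rule out vanishing and dichotomy directly on the constraint. The key leverage is the strict subadditivity / superadditivity of $E_{b_i}(\cdot)$ established in Lemma \ref{L2-3}$(iv)$ together with the sharp comparison $\sigma_s\lessgtr\sigma_{s^0}$ of Lemma \ref{L2-4}$(ii)$: these guarantee that mass cannot redistribute among the $k$ wells, nor split within a single well, without strictly violating the energy ceiling $\sigma_0$. I would use the support-separation estimate of Proposition \ref{L2-2} to decouple the $k$ components so that the global energy bound descends to each component, and then invoke Lemma \ref{L2-6} componentwise. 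Verifying that the cutoff $\zeta_{\epsilon_n}$ contributes only $o(1)$ to both the $H^1$-norm and the nonlocal energy — so that the limiting profiles genuinely satisfy the clean system \eqref{e12} rather than a cutoff-perturbed version — is the most delicate bookkeeping, and is exactly the place where I expect the argument to require care.
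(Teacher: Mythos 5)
Your proposal is correct and follows essentially the same route as the paper's proof: decompose $u_n$ via the structure of $D^{\epsilon_n}_d$, extract translated weak profiles solving the limit equation with coefficient $Q(x_i)^2$, bound the multiplier by testing against $u_n$, and squeeze $\sum_i E_{Q^2(x_i)}(\phi_i)$ between the lower bound from Lemmas \ref{L2-4}--\ref{L2-6} and the ceiling $\sigma_0$ to force $Q(x_i)=M_i$, $\lambda=\lambda_0$, and strong convergence. The only cosmetic difference is in how the remainder is controlled: the paper uses a Lions-type vanishing argument on the annuli $B_{2\tau/\epsilon_n}(\widetilde{x}_n^i)\setminus B_{\tau/\epsilon_n}(\widetilde{x}_n^i)$ together with the smallness $\|u_{n,2}\|\leq 2d$ to show $E_{\epsilon_n}(u_{n,2})\geq o_n(1)$, whereas you invoke Brezis--Lieb splitting and strict sub/superadditivity, which amounts to the same bookkeeping.
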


\begin{proof}
For $n$ sufficiently large, by compactness of $\mathcal{S}$, there exist $\mathbf{U}=(u_1,\cdots,u_{ k})\in\mathcal{S}$, $v_n\in H^1(\mathbb{R}^N)$
 and
$x_{n}^{i}\in(\mathcal{A}_i)^{\tau}$ for $i=1,\cdots, k$, passing to a subsequence still denote $\{u_n\}$, such that
\begin{align}
u_n=\sum_{i=1}^{ k}\zeta_{\epsilon_n}(x-\frac{x_{n}^{i}}{\epsilon_n})u_i(x-\frac{x_{n}^{i}}{\epsilon_n})+v_{n},\ \|v_{n}\|\leq d.\label{e20}
\end{align}
For convince, we write $\widetilde{x}_{n}^{i}=\frac{x_{n}^{i}}{\epsilon_n}$.
For any $R\geq1$, one has $
\lim\limits_{n\rightarrow\infty}|u_n(x+\widetilde{x}_{n}^{i})-u_i|_{L^2(B_R(0))}\leq d,$
which implies that for $0<d<\frac{1}{2}\min\limits_{1\leq i\leq k}|u_i|_{L^2(B_1(0))}$, there holds
\begin{align}
0< |u_i|_{L^2(B_R(0))}-d\leq \lim_{n\rightarrow\infty}|u_n(x+\widetilde{x}_{n}^{i})|_{L^2(B_R(0))}\leq d+|u_i|_{L^2(B_R(0))}.\label{e21}
\end{align}
To verify (\ref{e19}), we first denote $u_{n,1}=\sum_{i=1}^{ k} \zeta_{\epsilon_n}(x-\widetilde{x}_{n}^{i})u_n$, $u_{n,2}=u_n-u_{n,1}$ and discuss the properties of $u_{n,1}$ and $u_{n,2}$.

\textbf{Claim 1:} $u_n$ must vanish in $\cup^{ k}_{i=1}B_{2\tau/{\epsilon_n}}(\widetilde{x}_{n}^{i})\setminus B_{\tau/{\epsilon_n}}(\widetilde{x}_{n}^{i})$.

For $R>0$, we define
\begin{align*}
\delta:=\lim\limits_{n\rightarrow\infty}\sup\limits_{y\in \cup^{ k}_{i=1}B_{2\tau/{\epsilon_n}}(\widetilde{x}_{n}^{i})\setminus B_{\tau/{\epsilon_n}}(\widetilde{x}_{n}^{i})}
\int_{B_R(y)}|u_n|^2dx.
\end{align*}
We claim that $\delta=0$. Otherwise, for $\epsilon_n>0$ small, there exist $y_{n,i}\in B_{2\tau/{\epsilon_n}}(\widetilde{x}_{n}^{i})\setminus B_{\tau/{\epsilon_n}}(\widetilde{x}_{n}^{i})$ for some $i$, such that
\begin{align*}
\int_{B_R(y_{n,i})}|u_n|^2>C_0>0.
\end{align*}
Since $y_{n,i}\in B_{2\tau/{\epsilon_n}}(\widetilde{x}_{n}^{i})\setminus B_{\tau/{\epsilon_n}}(\widetilde{x}_{n}^{i})$, it holds $
\|\zeta_{\epsilon_n}(\cdot-\widetilde{x}_{n}^{i})u_i(\cdot-\widetilde{x}_{n}^{i})\|_{B_R(y_{n,i})}=o_n(1)$.
It follows from (\ref{e20}) that $
\|u_n-\zeta_{\epsilon_n}(\cdot-\widetilde{x}_{n}^{i})u_i(\cdot-\widetilde{x}_{n}^{i})\|_{B_R(y_{n,i})}\leq 2d.$
Thus, for sufficiently small $d$, we obtain a contradiction. Similarly, according to Lion's lemma (See \cite{Lions}, Lemma 1.1), we can derive that
\begin{align}
\delta:=\lim\limits_{n\rightarrow\infty}
\int_{ \cup^{ k}_{i=1}B_{2\tau/{\epsilon_n}}(\widetilde{x}_{n}^{i})\setminus B_{\tau/{\epsilon_n}}(\widetilde{x}_{n}^{i})}|u_n|^qdx=0,\ \text{for}\ 2<q<2^{\ast}.
\label{e22}
\end{align}

\textbf{Claim 2:} $E_{\epsilon_n}(u_n)\geq  E_{\epsilon_n}(u_{n,1})+ E_{\epsilon_n}(u_{n,2})+o_n(1)$.

It follows from (\ref{e21}) and (\ref{e22}) that
\begin{align*}
 \int_{\mathbb{R}^N}\int_{\mathbb{R}^N}\frac{Q(\epsilon_n x)|u_n(x)|^pQ(\epsilon_n y)|u_n(y)|^p}{|x-y|^{\mu}}
 =&\int_{\mathbb{R}^N}\int_{\mathbb{R}^N}\frac{Q(\epsilon_n x)|u_{n,1}(x)|^pQ(\epsilon_n y)|u_{n,1}(y)|^p}{|x-y|^{\mu}}\\&+
 \int_{\mathbb{R}^N}\int_{\mathbb{R}^N}\frac{Q(\epsilon_n x)|u_{n,2}(x)|^pQ(\epsilon_n y)|u_{n,2}(y)|^p}{|x-y|^{\mu}}+o_n(1).
 \end{align*}
Then we have
 \begin{align*}
 E_{\epsilon_n}(u_n)=&E_{\epsilon_n}(u_{n,1})+E_{\epsilon_n}(u_{n,2})+\int_{\mathbb{R}^N}\sum_{i=1}^{ k}\zeta_{\epsilon_n}(x-\widetilde{x}_{n}^{i})
 \left(1-\sum_{i=1}^{ k}\zeta_{\epsilon_n}(x-\widetilde{x}_{n}^{i})\right)
 |\nabla u_n|^2+o_n(1)\\
 \geq&E_{\epsilon_n}(u_{n,1})+ E_{\epsilon_n}(u_{n,2})+o_n(1).
 \end{align*}

 \textbf{Claim 3:} $E_{\epsilon_n}(u_{n,2})\geq o_n(1)$.

From (\ref{e20}), we conclude that $\|u_{n,2}\|\leq 2d$ for $n$ sufficiently large.
A direct calculation gives that
 \begin{align*}
E_{\epsilon_n}(u_{n,2})= & \frac{1}{2}\int_{\mathbb{R}^N}|\nabla u_{n,2}|^2-\frac{1}{2p}\int_{\mathbb{R}^N}\int_{\mathbb{R}^N}\frac{|Q(\epsilon x)u_{n,2}|^p|Q(\epsilon y)u_{n,2}|^p}{|x-y|^{\mu}}\\ \geq &\frac{1}{2}\int_{\mathbb{R}^N}|\nabla u_{n,2}|^2-
C\left(\int_{\mathbb{R}^N}|u_{n,2}|^{2Np/(2N-\mu)}\right)^{(2N-\mu)/N}\\
\geq& \frac{1}{2}\int_{\mathbb{R}^N}|\nabla u_{n,2}|^2-C_{\beta}\left(\int_{\mathbb{R}^N}|\nabla u_{n,2}|^2\right)^p-
\beta\left(\int_{\mathbb{R}^N}|u_{n,2}|^2\right)^p.
\end{align*}
 Taking $ d$ and $\beta$ small enough, we can deduce that $E_{\epsilon_n}(u_{n,2})\geq o_n(1)$.

\textbf{Claim 4:} $\|u_{n,1}-\sum\limits_{i=1}^{ k}\zeta_{\epsilon_n}(\cdot-\widetilde{x}_{n}^{i})u_i(\cdot-\widetilde{x}_{n}^{i})\|=o_n(1)$.

By (\ref{e17}), we show that $
|\lambda_n|=\left|\lambda_n\int_{\mathbb{R}^N}|u_n|^2\right|\leq \|u_n\|^2+C\|u_n\|^p\leq C$,
which implies that $\lambda_n$ is bounded.
Recall that $u_n$ does not vanish near $\widetilde{x}_{n}^{i}$, then we define $
\phi_{n}^{i}=\zeta_{\epsilon_n}(x)u_n(x+\widetilde{x}_{n}^{i})$.
Then up to a subsequence, $\epsilon_n\widetilde{x}_{n}^{i}\rightarrow x_i\in (\mathcal{A}_i)^{\tau}$
and $\lambda_n\rightarrow\lambda$ in $\mathbb{R}^N$,
$\phi_{n}^{i}\rightharpoonup \phi_i$ weakly in $H^1(\mathbb{R}^N)$.
Moreover, $\phi_i\neq 0$ satisfying
\begin{align*}
-\Delta \phi_i+\lambda \phi_i=\left(Q(x_i)\right)^2\int_{\mathbb{R}^N}\frac{|\phi_i(y)|^p}{|x-y|^{\mu}}dy|\phi_i(x)|^{p-2}\phi_i(x),\ x\in\mathbb{R}^N.
\end{align*}
By the Poho\v{z}aev identity, we deduce $\lambda>0$.

Now, we prove $\phi_{n}^{i}\rightarrow \phi_i$ strongly in $H^1(\mathbb{R}^N)$.  First of all, we proof that $\phi_{n}^{i}\rightarrow \phi_i$ strongly in $L^q$ for any $q\in(2, 2^{\ast})$. Otherwise,
there exist $R>0$ and a sequence $\{x_n\}\subset\mathbb{R}^N$ with $x_n\in B(\widetilde{x}_{n}^{i}, 2\tau/{\epsilon_n})$ such that
$|x_n-\widetilde{x}_{n}^{i}|\rightarrow \infty$ and
\begin{align*}
\varliminf\limits_{n\rightarrow\infty}\int_{B_R(x_n)}|\zeta_{\epsilon_n}(x-\widetilde{x}_{n}^{i})u_n(x)|^2>0.
\end{align*}
Let $\epsilon_nx_n\rightarrow x_0\in B_{2\tau}(x_i)$ as $n\rightarrow\infty$ and
define $\widetilde{\phi}_{n}^{i}=\zeta_{\epsilon_n}(x-\widetilde{x}_{n}^{i}+x_n)u_n(x+x_n)$, then up to a subsequence, $
\widetilde{\phi}_{n}^{i}\rightharpoonup \widetilde{\phi}_i\neq 0$
weakly in $H^1(\mathbb{R}^N)$ and satisfying
\begin{align*}
-\Delta \widetilde{\phi}_i+\lambda \widetilde{\phi}_i=
\left(Q(x_0)\right)^2\int_{\mathbb{R}^N}\frac{|\widetilde{\phi}_i(y)|^p}{|x-y|^{\mu}}dy|\widetilde{\phi}_i(x)|^{p-2}\widetilde{\phi}_i(x),\ x\in\mathbb{R}^N
\end{align*}
Similar to the argument of Claim $1$,  we get a contradiction. So, $\phi_{n}^{i}\rightarrow \phi_i$ strongly in $L^q(\mathbb{R}^N)$ for any $q\in(2,2^{\ast})$. Then we have
\begin{align*}
&\lim\limits_{n\rightarrow \infty}\int_{\mathbb{R}^N}\int_{\mathbb{R}^N}\frac{|Q(\epsilon_n x)\phi_{n}^{i}(x-\widetilde{x}_{n}^{i})|^p|Q(\epsilon_n y)\phi_{n}^{i}(y-\widetilde{x}_{n}^{i})|^p}{|x-y|^{\mu}}\\=&
\lim\limits_{n\rightarrow \infty}\int_{\mathbb{R}^N}\int_{\mathbb{R}^N}\frac{|Q(\epsilon_n (x+\widetilde{x}_{n}^{i}))
\phi_{n}^{i}(x)|^p|Q(\epsilon_n (y+\widetilde{x}_{n}^{i}))\phi_{n}^{i}(y)|^p}{|x-y|^{\mu}}\\=&\left(Q(x_i)\right)^2
\int_{\mathbb{R}^N}\int_{\mathbb{R}^N}\frac{|\phi_{i}(x)|^p|\phi_{i}(y)|^p}{|x-y|^{\mu}}
\end{align*}
and $
\sum_{i=1}^{ k}|\phi_i|_2^2=\lim\limits_{n\rightarrow\infty}\sum_{i=1}^{ k}|\zeta_{\epsilon_n}(x-\widetilde{x}_{n}^{i})u_n(x)|^2_2=1.$
It follows from the weak convergence of $\phi_{n}^{i}$ to $\phi_i$ in $H^1(\mathbb{R}^N)$ that
\begin{align*}
E_{\epsilon_n}(u_{n,1})= & \sum_{i=1}^{ k}\frac{1}{2}\int_{\mathbb{R}^N}|\nabla \phi_{n}^{i}(x-\widetilde{x}_{n}^{i})|^2-\frac{1}{2p}\int_{\mathbb{R}^N}\int_{\mathbb{R}^N}\frac{|Q(\epsilon_n x)
\phi_{n}^{i}(x-\widetilde{x}_{n}^{i})|^p|Q(\epsilon_n y)\phi_{n}^{i}(y-\widetilde{x}_{n}^{i})|^p}{|x-y|^{\mu}}\\ \geq &\sum_{i=1}^{ k}\frac{1}{2}\int_{\mathbb{R}^N}|\nabla \phi_i|^2
-\frac{1}{2p}Q^2(x_i)\int_{\mathbb{R}^N}\int_{\mathbb{R}^N}\frac{|\phi_{i}(x)|^p|\phi_{i}(y)|^p}{|x-y|^{\mu}}+o_n(1)
\\ =& \sum_{i=1}^{ k} E_{Q^2(x_i)}(\phi_i)+o_n(1).
\end{align*}
From Lemmas \ref{L2-4} $(i)$ and \ref{L2-6}, we deduce that
\begin{align*}
\sum\limits_{i=1}^{ k}E_{Q^2(x_i)}(\phi_i)\geq E_1(1)\left[\sum\limits_{i=1}^{ k}(Q(x_i))^{-2/(p-1)}\right]^{\frac{-2p+2}{-Np+2N-\mu+2}}\geq \sigma_0.
\end{align*}
Then combining with  (\ref{e17}) and the conclusion of Claim 2, we obtain that
\begin{equation*}
\sigma_0\leq\lim\limits_{n\rightarrow\infty}
E_{\epsilon_n}(u_{n,1})\leq \limsup\limits_{n\rightarrow\infty}
E_{\epsilon_n}(u_{n})\leq \sigma_0.
\end{equation*}
Hence, we deduce that $\phi_{n}^{i}\rightarrow \phi_i$ strongly in $H^1(\mathbb{R}^N)$ and $Q(x_i)=M_i$, $\lambda=\lambda_0$, $\phi_i=u_i$ for some $u_i\in\overline{S}_{b_i, s_i^0}$, where $i=1,\cdots, k$.  By calculating, $
\|u_{n,1}-\sum\limits_{i=1}^{ k}\zeta_{\epsilon_n}(\cdot-\widetilde{x}_{n}^{i})u_i(\cdot-\widetilde{x}_{n}^{i})\|=o_n(1)$. From the above results, we complete the proof.
\end{proof}

\begin{corollary}\label{L3.2}
Suppose that $(Q)$ holds.
Let $d>0$ small enough. Then constants $c,\epsilon_0>0$ exists such that $\|E'_{\epsilon}|_{ S_1}(u)\|_{T^*_u S_1}\geq c$
for $u\in E_{\epsilon}^{\sigma_{\epsilon}}\cap (D^{\epsilon}_d\setminus D^{\epsilon}_{d/2})$ and $\epsilon\in (0,\epsilon_0)$,
where $E_{\epsilon}^{\sigma_{\epsilon}}=\{u\in H^1(\mathbb{R}^N):\ E_{\epsilon}\leq \sigma_{\epsilon}$\}.
\end{corollary}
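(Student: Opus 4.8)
The plan is to argue by contradiction and reduce the statement directly to the concentration--compactness result of Proposition \ref{L3.1}. Suppose the conclusion fails. Then it fails for every choice of $c,\epsilon_0>0$, so taking $c=1/n$ and $\epsilon_0=1/n$ we can extract sequences $\epsilon_n\to 0^+$ and $u_n\in E_{\epsilon_n}^{\sigma_{\epsilon_n}}\cap(D^{\epsilon_n}_d\setminus D^{\epsilon_n}_{d/2})$ such that $\|E'_{\epsilon_n}|_{S_1}(u_n)\|_{T^*_{u_n}S_1}\to 0$. The whole point is that these sequences are engineered to satisfy hypotheses (\ref{e17})--(\ref{e18}) of Proposition \ref{L3.1}, after which its conclusion (\ref{e19}) is incompatible with $u_n$ lying in the annulus $D^{\epsilon_n}_d\setminus D^{\epsilon_n}_{d/2}$.

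First I would translate the smallness of the constrained differential into the free Euler--Lagrange form of (\ref{e18}). Since $u_n\in S_1$, the tangent space is $T_{u_n}S_1=\{v\in H^1(\mathbb{R}^N):\langle u_n,v\rangle_{L^2}=0\}$, and $E'_{\epsilon_n}|_{S_1}(u_n)$ is just $E'_{\epsilon_n}(u_n)$ with its component along the normal direction $W'(u_n)$ removed. Taking $\lambda_n\in\mathbb{R}$ to be the Lagrange multiplier realizing this projection, one has the standard identity $\|E'_{\epsilon_n}(u_n)+\lambda_n W'(u_n)\|_{H^{-1}}=\|E'_{\epsilon_n}|_{S_1}(u_n)\|_{T^*_{u_n}S_1}\to 0$, which is exactly (\ref{e18}). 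Then I would verify (\ref{e17}): the condition $\epsilon_n\to 0^+$ is already in hand, while $u_n\in E_{\epsilon_n}^{\sigma_{\epsilon_n}}$ gives $E_{\epsilon_n}(u_n)\le\sigma_{\epsilon_n}$, and combining this with the built-in property $\limsup_{\epsilon\to 0}\sigma_{\epsilon}\le\sigma_0$ of the level $\sigma_\epsilon$ yields $\limsup_n E_{\epsilon_n}(u_n)\le\sigma_0$.

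With both (\ref{e17}) and (\ref{e18}) established, Proposition \ref{L3.1} applies and provides, after passing to a subsequence, points $\widetilde{x}_n^i=x_n^i/\epsilon_n$ with $\mathrm{dist}(x_n^i,\mathcal{A}_i)\to 0$ and profiles $u_i\in\overline{S}_{b_i,s_i^0}$ such that
\[
\Big\|u_n-\sum_{i=1}^{k}\zeta_{\epsilon_n}(\cdot-\widetilde{x}_n^i)u_i(\cdot-\widetilde{x}_n^i)\Big\|\to 0 .
\]
For $n$ large, $\mathrm{dist}(x_n^i,\mathcal{A}_i)<\tau$ forces $x_n^i\in(\mathcal{A}_i)^{\tau}$, and since $u_i\in\overline{S}_{b_i,s_i^0}$, the comparison function $w_n:=\sum_{i=1}^{k}\zeta_{\epsilon_n}(\cdot-\widetilde{x}_n^i)u_i(\cdot-\widetilde{x}_n^i)$ belongs to $D^{\epsilon_n}$ by the very definition of that set. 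Hence $\inf_{v\in D^{\epsilon_n}}\|u_n-v\|\le\|u_n-w_n\|\to 0$, so $u_n\in D^{\epsilon_n}_{d/2}$ for all large $n$, contradicting $u_n\in D^{\epsilon_n}_d\setminus D^{\epsilon_n}_{d/2}$.

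The only genuinely delicate point is the passage in the second paragraph: one must be sure that smallness of the intrinsic constrained gradient in $T^*_{u_n}S_1$ really produces a multiplier sequence $\lambda_n$ obeying (\ref{e18}) in the $H^{-1}$ norm used by Proposition \ref{L3.1}. This is the standard constrained-manifold computation on the sphere $S_1$, and the boundedness of $\lambda_n$ needed to make it effective is in any case secured internally within Proposition \ref{L3.1}; all the substantive analysis (non-vanishing, splitting of the energy, and the rigidity forcing $Q(x_i)=M_i$, $\lambda=\lambda_0$) has already been absorbed into that proposition, so the corollary itself is essentially a bookkeeping consequence.
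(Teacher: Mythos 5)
Your argument is correct and is precisely the route the paper intends: the corollary is stated without proof as an immediate consequence of Proposition \ref{L3.1}, and your contradiction argument (extracting sequences $\epsilon_n\to 0^+$, $u_n\in E_{\epsilon_n}^{\sigma_{\epsilon_n}}\cap(D^{\epsilon_n}_d\setminus D^{\epsilon_n}_{d/2})$, converting the constrained gradient bound into (\ref{e18}) via the dual-norm identity $\|f\|_{T^*_uS_1}=\min_{\lambda}\|f-\lambda W'(u)\|_{H^{-1}}$, and then using (\ref{e19}) to force $u_n\in D^{\epsilon_n}_{d/2}$) is exactly the bookkeeping the authors leave to the reader. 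No gaps.
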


Set the tangent space to $ S_1$ at $u\in  S_1$ by $
T_u S_1=\{v\in H^1(\mathbb{R}^N)|\ \int_{\mathbb{R}^N} uv=0\}$
and $T^*_u S_1$ the dual space of $T_{u} S_1$ with the dual norm $
\|f\|_{T^*_u S_1}=\sup\limits_{v\in T_u S_1,\|v\|=1}f(v)=\min\limits_{\lambda\in\mathbb{R}}\|f-\lambda W'(u)\|_{H^{-1}}$.
Next we show that $E_{\epsilon}$ satisfies the Palais-Smale condition in $D^{\epsilon}_d$ for $\epsilon>0$ fixed.

\begin{proposition}\label{L3.3}
Suppose that $(Q)$ holds.
For each $\sigma\in\mathbb{R}^+$,  there exists $\epsilon_{\sigma}>0$ such that for any $\epsilon\in (0,\epsilon_{\sigma})$, if a sequence $\{v_{n}\}\subset D^{\epsilon}_d$ satisfies
\begin{align}
E_{\epsilon}(v_{n})\leq\sigma,\ \lim\limits_{n\rightarrow\infty}\|E'_{\epsilon}|_{ S_1}(v_{n})\|_{T^*_{v_{n}} S_1}=0,\label{e23}
\end{align}
then up to a subsequence, there exists $v_0\in H^1(\mathbb{R}^N)$ such that $v_{n}\rightarrow v_0$ in $H^1(\mathbb{R}^N)$.
\end{proposition}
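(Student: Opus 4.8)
The plan is to run a Palais--Smale scheme on the constraint $S_1$ and to show that every loss of compactness occurs at spatial infinity, where it is excluded by the smallness of $d$ (and of $\epsilon$). First I would note that $\{v_n\}$ is automatically bounded in $H^1(\mathbb{R}^N)$: since $v_n\in D^{\epsilon}_d$, it lies within $H^1$-distance $d$ of an element of $D^{\epsilon}$, whose norm is controlled by the boundedness of $\mathcal{S}$ (Lemma \ref{L2-7}). Hence, up to a subsequence, $v_n\rightharpoonup v_0$ in $H^1$, $v_n\to v_0$ in $L^q_{loc}$ for $q\in[2,2^{\ast})$ and a.e. Using the identity $\|f\|_{T^*_uS_1}=\min_{\lambda}\|f-\lambda W'(u)\|_{H^{-1}}$, the hypothesis (\ref{e23}) provides multipliers $\lambda_n$ with $E'_{\epsilon}(v_n)-\lambda_n W'(v_n)\to0$ in $H^{-1}$; testing against $v_n$ and using $|v_n|_2^2=1$ expresses $\lambda_n$ through the bounded quantities $\|\nabla v_n\|_2^2$ and the Choquard term, so $\{\lambda_n\}$ is bounded and $\lambda_n\to\lambda_0$ along a subsequence. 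The energy bound $E_{\epsilon}(v_n)\le\sigma$ bounds $\lambda_0$ from above, while (as justified below) $v_0$ stays within $O(d)$ of the ground-state skeleton whose multiplier is the strictly positive $\lambda_0$ of Corollary \ref{L2-5} once $\epsilon$ is small, so $\lambda_0$ is also bounded below by a positive constant. Passing to the limit (for fixed $\epsilon$ the weight $Q(\epsilon\cdot)$ is a fixed continuous bounded function, and the nonlocal term is weakly continuous on bounded sets by Proposition \ref{L2-1} and local compactness), $v_0$ solves $-\Delta v_0+\lambda_0 v_0=Q(\epsilon x)\left(\int_{\mathbb{R}^N}\frac{Q(\epsilon y)|v_0(y)|^p}{|x-y|^{\mu}}\,dy\right)|v_0|^{p-2}v_0$.

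The decisive structural point is that $D^{\epsilon}_d$ confines almost all of the mass to a fixed bounded region. Writing $v_n=V_n+r_n$ with $V_n\in D^{\epsilon}$ supported in $\bigcup_i B_{2\tau/\epsilon}(\widetilde{x}_n^i)$ and $\|r_n\|\le d$, the centres $\widetilde{x}_n^i$ lie in the fixed bounded set $\frac1\epsilon\bigcup_i\overline{(\mathcal{A}_i)^{\tau}}$, so by Lemma \ref{L2-7} one may assume $V_n\to V$ in $H^1$ with $V$ compactly supported. A Brezis--Lieb splitting in $L^2$ then bounds the lost mass $1-|v_0|_2^2=\lim_n|v_n-v_0|_2^2$ by $O(d^2)$; in particular $v_0\ne0$ for $d$ small, and $\|v_0-V\|\le d$, which is the closeness to the skeleton used above. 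Setting $w_n=v_n-v_0\rightharpoonup0$ and applying the nonlocal Brezis--Lieb lemma (as in the proof of Lemma \ref{L2-7}) to both the functional and its differential, $w_n$ becomes an \emph{unconstrained} approximate critical sequence, so that
\begin{equation*}
\|\nabla w_n\|_2^2+\lambda_n|w_n|_2^2=\int_{\mathbb{R}^N}\int_{\mathbb{R}^N}\frac{Q(\epsilon x)Q(\epsilon y)|w_n(x)|^p|w_n(y)|^p}{|x-y|^{\mu}}\,dx\,dy+o(1).
\end{equation*}

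The heart of the matter -- and the main obstacle -- is to rule out a nontrivial profile escaping to infinity, which is exactly where the merely bounded, non-decaying potential $Q$ and the sublinear range $p<2$ obstruct the usual exponential-decay and penalization tools. Applying Lions' lemma to $w_n$, either $w_n\to0$ in $L^{2Np/(2N-\mu)}$, so that the right-hand side of the displayed identity tends to $0$ (Proposition \ref{L2-1}) and, since $\lambda_0>0$, $w_n\to0$ in $H^1$; or there exist $y_n$ with $|y_n|\to\infty$ (forced by $w_n\rightharpoonup0$) along which $w_n(\cdot+y_n)\rightharpoonup\widehat{w}\ne0$ solves a translation-invariant limit Choquard equation with multiplier $\lambda_0$ and a potential value $Q_\infty\in[0,\|Q\|_\infty]$. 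For this limit equation the Nehari and Pohozaev identities are two linear relations that eliminate the nonlocal term and yield $\|\nabla\widehat{w}\|_2^2=\kappa\lambda_0|\widehat{w}|_2^2$ with an explicit $\kappa>0$ in both ranges of $p$; thus a nontrivial bubble forces $\lambda_0>0$ and satisfies $\|\widehat{w}\|^2=(1+\kappa\lambda_0)|\widehat{w}|_2^2$. Since $|\widehat{w}|_2^2$ is part of the lost mass it is $O(d^2)$, whereas the Hardy--Littlewood--Sobolev inequality gives the threshold $\|\widehat{w}\|^{2(p-1)}\ge c(\lambda_0,\|Q\|_\infty)>0$; as $\lambda_0$ is bounded above and below away from $0$, these two estimates are incompatible for $d$ (and $\epsilon$) small. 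Hence the bubble cannot occur, only the vanishing alternative survives, and $w_n\to0$, i.e. $v_n\to v_0$ in $H^1(\mathbb{R}^N)$; Proposition \ref{L2-2} is used throughout to discard the cross nonlocal terms between the bounded region and any escaping profile.
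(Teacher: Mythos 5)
Your proposal is correct in its overall architecture and shares the paper's skeleton up to the last step: boundedness from the $D^{\epsilon}_d$-decomposition, boundedness and convergence of the multipliers, positivity of the limit multiplier via a Poho\v{z}aev argument for small $\epsilon$, and identification of the equation satisfied by the weak limit $v_0$ all appear in the paper in essentially the form you give them. Where you genuinely diverge is in proving $w_n=v_n-v_0\to0$. The paper does not invoke Lions' dichotomy or an escaping bubble at all: it tests the difference of the two equations against $v_n-v_0$, applies the Brezis--Lieb identity to reduce the nonlocal remainder to $\int\int Q(\epsilon x)Q(\epsilon y)|w_n(x)|^p|w_n(y)|^p|x-y|^{-\mu}$, and then absorbs this term into $\min\{1,\lambda_{\sigma}/2\}\|w_n\|^2$ by splitting $\mathbb{R}^N$ into a large ball $B_R(0)\supset\cup_i\Omega_{\epsilon,i}$ (where Rellich--Kondrachov gives $o_n(1)$) and its complement (where $\|w_n\|_{H^1}$ is of order $d$ because the skeleton is supported inside). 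Your route instead extracts a profile at infinity and rules it out by comparing the Hardy--Littlewood--Sobolev lower bound $\|\widehat{w}\|^{2(p-1)}\geq c(\lambda_0)>0$, transferred to $|\widehat{w}|_2^2$ via Nehari--Poho\v{z}aev, against the $O(d^2)$ bound on the lost mass. Both mechanisms exploit the same fact (the tail of $w_n$ is $O(d)$ in $H^1$), and both work; yours is arguably more transparent about \emph{why} compactness holds, while the paper's is shorter and avoids the profile decomposition.

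One caveat you should address: to say the escaping bubble solves an autonomous limit equation with coefficient $Q_\infty^2$ (which is what licenses the Nehari and Poho\v{z}aev identities), you need $Q(\epsilon x+\epsilon y_n)\to Q_\infty$ locally uniformly in $x$, and condition $(Q)$ only gives $Q\in C\cap L^{\infty}$, not uniform continuity. The cleanest repair is to note that you never actually need the limit PDE: since $\|w_n\|_{H^1}\leq 2d+o_n(1)$ (from $\|v_n-V_n\|\leq d$, $\|v_0-V\|\leq d$ by weak lower semicontinuity, and $V_n\to V$ strongly), the approximate identity $\min\{1,\lambda_0\}\|w_n\|^2\leq C\|w_n\|^{2p}+o_n(1)$ obtained from HLS already forces $\|w_n\|\to0$ once $(2d)^{2(p-1)}<\min\{1,\lambda_0\}/C$, with no dichotomy required. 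With that adjustment your argument closes completely.
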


\begin{proof}
It follows from the definition of $D^{\epsilon}_d$ and the compactness of $\mathcal{S}$ that we may assume that up to a subsequence,
$v_{n}=\sum_{i=1}^{ k}\zeta_{\epsilon}(x-x_{n,\epsilon}^i)u_i(x-x_{n,\epsilon}^i)+v_{n,\epsilon}$ with $\epsilon x_{n,\epsilon}^i\in(\mathcal{A}_i)^{\tau}$
, $(u_1,\cdots,u_{ k})\in \mathcal{S}$ and $\|v_{n,\epsilon}\|\leq d$. Clearly, we can easily deduce that $v_{n}$ is bounded.
From the assumption (\ref{e23}), we deduce that for some $\lambda_{n, \epsilon}\in\mathbb{R}$, there holds $
E'_{\epsilon}(v_{n})+\lambda_{n,\epsilon}W'(v_{n})=o_n(1)$ in $H^{-1}$.
That is,
for any $\eta\in H^1(\mathbb{R}^N)$, it holds
\begin{align}
\int_{\mathbb{R}^N}\left(\nabla v_{n}\nabla \eta+\lambda_{n,\epsilon} v_{n}\eta\right)-\int_{\mathbb{R}^N}\int_{\mathbb{R}^N}\frac
{Q(\epsilon x)Q(\epsilon y)|v_{n}(x)|^p|v_{n}(y)|^{p-2}\eta v_{n}}{|x-y|^{\mu}}=o_n(1).\label{e24}
\end{align}
It follows from (\ref{e24}) that
\begin{align*}
|\lambda_{n,\epsilon}|=&|\lambda_{n,\epsilon}W'(v_{n})v_{n}|=|E'_{\epsilon}(v_{n})+o_n(1)|\\
\leq& C\left(\|v_{n}\|^2+
\left(\int_{\mathbb{R}^N}|v_{n}|^p)^{2N/(2N-\mu)}\right)^{(2N-\mu)/N}\right)\\
\leq&C\left(\|v_{n}\|^2+\|v_{n}\|^{2p}\right)
\end{align*}
Note that for $d<\frac{1}{2}\min_{1\leq i\leq k}\|u_i\|_{H^1(B_1(0))}$, it holds
\begin{equation*}
\liminf_{n\rightarrow\infty}\|v_{n}(\cdot+x_{n,\epsilon}^{i})\|_{H^1(B_1(0))}\geq \|u_i\|_{H^1(B_1(0))}-d>0.
\end{equation*}
Thus we assume that up to a subsequence, $\lambda_{n,\epsilon}\rightarrow\lambda_{\epsilon}$ in $\mathbb{R}$ and $v_{n}\rightharpoonup v_0=
\sum_{i=1}^{ k}
\zeta(\epsilon x-x_{\epsilon}^i)u_{i}(x-x_{\epsilon}^i)+v_{\epsilon}$ in $H^1(\mathbb{R})\setminus\{0\}$ with $x_{n,\epsilon}^i\rightarrow x_{\epsilon}^i\in (\mathcal{A}_{\epsilon,i})^{\tau}$ and $v_{n,\epsilon}\rightharpoonup v_{\epsilon}$ in $H^1(\mathbb{R}^N)$. Moreover, $(\lambda_{\epsilon}, v_0)$ satisfies that for any $\eta\in H^1(\mathbb{R}^N)$,
\begin{align}
\int_{\mathbb{R}^N}\left(\nabla v_0\nabla \eta+\lambda_{\epsilon} v_0\eta\right)-\int_{\mathbb{R}^N}\int_{\mathbb{R}^N}\frac
{Q(\epsilon x)Q(\epsilon y)|v_0(x)|^p|v_0(y)|^{p-2}\eta v_0}{|x-y|^{\mu}}=0.\label{e25}
\end{align}

Now we prove $\lambda_{\epsilon}>0$ for $\epsilon$ sufficiently small. Suppose to the contrary, we assume that  up to a subsequence, $\lambda_{\epsilon}\rightarrow\lambda\leq 0$ as $\epsilon\rightarrow0$. Since $v_0$ is
bounded in $H^1(\mathbb{R}^N)$, we may assume that $v_0(\cdot+x_{\epsilon}^i)\rightharpoonup v$.
According to the following inequality
\begin{align*}
\liminf\limits_{\epsilon\rightarrow0}\|v_0(\cdot+x_{\epsilon}^i)\|_{H^1(B_1(0))}\geq \|u_i\|_{H^1(B_1(0))}-d>0.
\end{align*}
We have $v\neq0$. Denote $\psi=\eta(\cdot-x_{\epsilon}^1)$ in (\ref{e25}) for each $\eta\in H^1(\mathbb{R}^N)$ and take limits as $\epsilon\rightarrow0$.
We conclude that $v$ is a nontrivial solution to $-\Delta u+\lambda u=(M_0)^2\left(\int_{\mathbb{R}^N}\frac{|u|^p}{|x-y|^{\mu}}\right)|u|^{p-2}u$ for some $M_0>0$.
By the Poho\v{z}aev  identity, we deduce $\lambda>0$ which is a contradiction. Thus, we conclude that there exist $\epsilon_{\sigma}$ and $\lambda_{\sigma}$
 such that $\lambda_{\epsilon}>\lambda_{\sigma}$ for each $(0,\epsilon_{\sigma})$.

Combining (\ref{e24})-(\ref{e25}) and replacing $\eta$ by
$v_n-v_0$, one has
\begin{align*}
&\int_{\mathbb{R}^N}\left(|\nabla (v_n-v_0)|^2+\
\lambda_{\epsilon}(v_n-v_0)^2+(\lambda_{n,\epsilon}-\lambda_{\epsilon})v_{n}(v_n-v_0)\right)
\\-&\int_{\mathbb{R}^N}\int_{\mathbb{R}^N}\frac{Q(\epsilon y)Q(\epsilon x)(
|v_{n}(x)|^p|v_{n}(y)|^{p-2} v_{n}-
|v_0(x)|^p|v_0(y)|^{p-2} v_0)(v_n-v_0)}{|x-y|^{\mu}}
=o_n(1).
\end{align*}
Since $\lambda_{n,\epsilon}\rightarrow\lambda_{\epsilon}>0$ in $\mathbb{R}$ and
$v_{n}\rightharpoonup v_0$ as $n\rightarrow\infty$, we deduce that
\begin{align*}
&\min\{1,\lambda_{\sigma}/2\}\|v_n-v_0\|^2
\\ \leq& \int_{\mathbb{R}^N}\int_{\mathbb{R}^N}\frac{Q(\epsilon y)Q(\epsilon x)\left(
|v_{n}(x)|^p|v_{n}(y)|^{p-2} v_{n}-
|v_0(x)|^p|v_0(y)|^{p-2} v_0\right)(v_n-v_0)}{|x-y|^{\mu}}
+o_n(1)\\
=&\int_{\mathbb{R}^N}\int_{\mathbb{R}^N}\frac{
Q(\epsilon y)Q(\epsilon x)(|v_{n}(x)|^p|v_{n}(y)|^{p}-|v_0(x)|^p|v_0(y)|^{p})}{|x-y|^{\mu}}+o_n(1)\\
=&\int_{\mathbb{R}^N}\int_{\mathbb{R}^N}\frac{
Q(\epsilon y)Q(\epsilon x)|v_{n}(x)-v_0(x)|^p|v_{n}(y)-v_0(y)|^{p}}{|x-y|^{\mu}}+o_n(1).
\end{align*}
For $R>0$ sufficiently large such that $\cup^{ k}_{i=1}\Omega_{\epsilon,i}\subset B_R(0)$, the Classical Rellich-Kondrachov embedding theorem and Sobolev inequality imply that
\begin{align*}
&\int_{\mathbb{R}^N}\int_{\mathbb{R}^N}\frac{
Q(\epsilon y)Q(\epsilon x)|v_n(x)-v_0(x)|^p|v_{n}(y)-v_0(y)|^{p}}{|x-y|^{\mu}}\\
\leq& C\left(\int_{\mathbb{R}^N}|v_n(x)-v_0(x)|^{2Np/(2N-\mu)}\right)^{(2N-\mu)/N}\\
\leq &C\left(\int_{B_R(0)}|v_n(x)-v_0(x)|^{2Np/(2N-\mu)}+\int_{\mathbb{R}^N\setminus B_R(0)}|v_{n}(x)
-v_0(x)|^{2Np/(2N-\mu)}\right)^{\frac{(p-1)(2N-\mu)}{Np}}\|v_n-v_0\|^2\\
\leq& C\left(\int_{\mathbb{R}^N\setminus B_R(0)}|v_n(x)-v_0(x)|^{2Np/(2N-\mu)}\right)^{\frac{(p-1)(2N-\mu)}{Np}}\|v_n-v_0\|^2+o_n(1)\\
\leq &C\|v_n-v_0\|^{2(p-1)}_{H^1(\mathbb{R}^N\setminus B_R(0))}\|v_n-v_0\|^2+o_n(1).
\end{align*}
Controlling $\epsilon_{\sigma}$ sufficiently small such that
$C\|v_n-v_0\|^{2(p-1)}_{H^1(\mathbb{R}^N\setminus B_R(0))}<\frac{1}{2}\min\{1,\lambda_{\sigma}/2\}$,
 there holds
 $\lim\limits_{n\rightarrow\infty}\|v_n-v_0\|=0$. Thus we complete the proof.
\end{proof}

\subsection{The proof of Theorem \ref{T1.5}}
In this section, we complete the proof of Theorem \ref{T1.5}.
Fix $d>0$ such that Proposition \ref{L3.1} and Corollary \ref{L3.2} hold. Then we have the following results.
\begin{proposition} \label{L3.4}
Suppose that $(Q)$ holds. Then
there exists a $\overline{\epsilon}_{\sigma}>0$ such that for any $\epsilon\in(0, \overline{\epsilon}_{\sigma})$, $E_{\epsilon}|_{S_1}$ has a critical point $v_{\epsilon}$ in $D^{\epsilon}_d$. Furthermore, it holds $\lim\limits_{\epsilon\rightarrow0}E_{\epsilon}(v_{\epsilon})=\sigma_0$ and the corresponding Lagrange multiplier $\lambda_{\epsilon}$ satisfis
\begin{align}
\ \lim\limits_{\epsilon\rightarrow0}\lambda_{\epsilon}=\lambda_0,\ \text{and}\ E'_{\epsilon}(v_{\epsilon})+\lambda_{\epsilon}W'(v_{\epsilon})=0,\label{e26}
\end{align}
where $\lambda_0$ and $\sigma_0$ are shown as (\ref{e14}) and (\ref{e16}) respectively..
\end{proposition}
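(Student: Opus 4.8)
The plan is to obtain $v_\epsilon$ as a constrained minimizer of $E_\epsilon$ over the tube $D^\epsilon_d$, and to use the gradient estimate of Corollary \ref{L3.2} together with the tail-minimizing map $J_\epsilon$ to deform any boundary minimizer off the collar $D^\epsilon_d\setminus D^\epsilon_{d/2}$, so that the minimizer becomes a free critical point of $E_\epsilon|_{S_1}$. First I would set $c_\epsilon:=\inf_{u\in D^\epsilon_d}E_\epsilon(u)$. By Lemma \ref{L2-7} the set $\mathcal S=\overline S_{b_1,s_1^0}\times\cdots\times\overline S_{b_k,s_k^0}$, and hence $D^\epsilon$, is bounded in $H^1$; since $\nabla\zeta_\epsilon=\epsilon\,\nabla\zeta(\epsilon\,\cdot)$ is uniformly small, $D^\epsilon_d$ is bounded in $H^1(\mathbb R^N)$, so $E_\epsilon$ is bounded on $D^\epsilon_d$ and $c_\epsilon$ is \emph{finite in both the $L^2$-subcritical and the $L^2$-supercritical regimes}. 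This boundedness is essential, because in the supercritical case $E_\epsilon$ is unbounded below on the full sphere $S_1$ and a naive minimization there would be meaningless. The upper bound $\limsup_{\epsilon\to0}c_\epsilon\le\sigma_0$ then follows by testing with the glued ground states $w_\epsilon=\sum_i\zeta_\epsilon(\cdot-x_i/\epsilon)u_i(\cdot-x_i/\epsilon)$, $x_i\in\mathcal A_i$: the interaction (cross) terms decay by Proposition \ref{L2-2} since the bumps separate at rate $1/\epsilon$, and $Q(x_i)=M_i$ yields $E_\epsilon(w_\epsilon)\to\sum_iE_{b_i}(u_i)=\sigma_0$.

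Next I would produce an interior critical point. A minimizing sequence lies in $D^\epsilon_d$ with $E_\epsilon\to c_\epsilon\le\sigma_\epsilon$ and, by Ekeland's principle, vanishing constrained gradient, so Proposition \ref{L3.3} (the Palais--Smale condition in $D^\epsilon_d$ for fixed small $\epsilon$) yields a minimizer $v_\epsilon\in D^\epsilon_d$ with $E_\epsilon(v_\epsilon)=c_\epsilon$. It remains to exclude $v_\epsilon$ from the boundary collar: Corollary \ref{L3.2} gives $\|E'_\epsilon|_{S_1}(u)\|_{T^*_uS_1}\ge c>0$ for every $u\in E_\epsilon^{\sigma_\epsilon}\cap(D^\epsilon_d\setminus D^\epsilon_{d/2})$. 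Running the negative constrained gradient flow and interleaving the tail-minimizing map $J_\epsilon$ — which by construction does not increase $E_\epsilon$, preserves the $L^2$ norm, and sends $D^\epsilon_d$ back into itself — I would build a deformation that stays in $D^\epsilon_d$ and strictly lowers $E_\epsilon$ whenever it meets the collar. If the minimum were attained on $\{\operatorname{dist}(\cdot,D^\epsilon)=d\}$ this would create points of $D^\epsilon_d$ with energy $<c_\epsilon$, a contradiction; hence $v_\epsilon\in D^\epsilon_{d/2}$, only the $L^2$ constraint is active, and there is a Lagrange multiplier $\lambda_\epsilon$ with $E'_\epsilon(v_\epsilon)+\lambda_\epsilon W'(v_\epsilon)=0$.

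For the asymptotics (\ref{e26}) I would feed the family $\{v_\epsilon\}$ back into Proposition \ref{L3.1}. Along any sequence $\epsilon_n\to0$ the pair $(\epsilon_n,v_{\epsilon_n})$ satisfies (\ref{e17}) (since $\limsup_n E_{\epsilon_n}(v_{\epsilon_n})=\limsup_n c_{\epsilon_n}\le\sigma_0$) and (\ref{e18}) (indeed $E'_{\epsilon_n}(v_{\epsilon_n})+\lambda_{\epsilon_n}W'(v_{\epsilon_n})=0$). Proposition \ref{L3.1} then forces $\lambda_{\epsilon_n}\to\lambda_0$, $E_{\epsilon_n}(v_{\epsilon_n})\to\sigma_0$, and the concentration (\ref{e19}); in particular this delivers the matching lower bound $c_{\epsilon_n}\to\sigma_0$ for free, so no separate lower estimate is needed. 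Since $\lambda_0$ and $\sigma_0$ do not depend on the chosen sequence, these conclusions hold for the whole family as $\epsilon\to0$, which is exactly (\ref{e26}) together with $\lim_{\epsilon\to0}E_\epsilon(v_\epsilon)=\sigma_0$; the location statement $\operatorname{dist}(x_\epsilon^i,\mathcal A_i)\to0$ of Theorem \ref{T1.5} is the translation-part of (\ref{e19}).

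The hard part will be the construction and uniform control of the tail-minimizing flow $J_\epsilon$ and the resulting flow-invariance of $D^\epsilon_d$. The plain negative gradient flow will in general leak out of the tube — most dangerously in the supercritical regime, where the limit ground states are saddles rather than local minima — so one cannot deform inside $D^\epsilon_d$ using the gradient alone. The crux is therefore to verify, \emph{uniformly for small $\epsilon$}, that $J_\epsilon$ does not increase $E_\epsilon$, respects the constraint $S_1$, and maps the collar strictly inward, and that it can be coupled with the gradient flow into a single continuous deformation; Corollary \ref{L3.2} is precisely what guarantees that the collar is only ever crossed with a definite drop in energy. This $J_\epsilon$-based argument replaces the penalization/exponential-decay machinery used for $p\ge2$, which is unavailable here because the decay of the Choquard bound states is not known in the sublinear range $p<2$.
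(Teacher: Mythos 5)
Your existence mechanism is wrong: the critical point sought in Proposition \ref{L3.4} is not a minimizer of $E_\epsilon$ over $D^\epsilon_d$, and $\sigma_0$ is not $\inf_{D^\epsilon_d}E_\epsilon$. In the $L^2$-subcritical case, Lemma \ref{L2-4} (ii) gives $\sigma_s<\sigma_{s^0}=\sigma_0$ for every mass distribution $s\neq s^0$; the glued configurations $L_{\epsilon,s}$ with $s\in K_{\nu}$, $s\neq s^0$, lie in $D^\epsilon_{d/4}$ and satisfy $E_\epsilon(L_{\epsilon,s})\to\sigma_s$, so $\inf_{D^\epsilon_d}E_\epsilon\leq\sigma_0-2\sigma$ for some $\sigma>0$ independent of $\epsilon$ (this is exactly what the paper's inequality (\ref{e31}) records). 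In the $L^2$-supercritical case the same loss occurs in the dilation directions: $t\mapsto E_{b_i}(t^{N/2}v_i(t\cdot))$ has a strict maximum at $t=1$, so $\widetilde L_{\epsilon,t}$ with $t\neq(1,\dots,1)$ again has energy strictly below $\sigma_0$ while staying inside the tube. Consequently, if your minimizer existed as an interior critical point it would have energy $\leq\sigma_0-2\sigma$, whereas Proposition \ref{L3.1} forces any critical point in $D^\epsilon_d$ at levels $\leq\sigma_0$ to have energy tending to $\sigma_0$ --- a contradiction. What actually happens is that a minimizing sequence drifts toward non-optimal mass (or dilation) configurations and the boundary collar, where Corollary \ref{L3.2} excludes critical points; Ekeland then yields nothing, and your ``deform off the collar'' step cannot repair this, since the deformation only lowers the energy further. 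The desired critical point is a saddle inside the tube with respect to the mass-distribution variables (subcritical) or the dilation variables (supercritical), so a pure minimization scheme cannot reach it.

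The paper's proof is accordingly a minimax-plus-degree argument rather than a minimization: assuming no critical point exists below $\sigma_\epsilon$, it applies the deformation $\theta$ of Lemma \ref{L3.5} and the tail map $J_\epsilon$ of Proposition \ref{L3.6} to the finite-dimensional families $L_{\epsilon,s}$ ($s\in K_{\nu}$) resp.\ $\widetilde L_{\epsilon,t}$ ($t\in[1-h,1+h]^{k}$), pushing the whole family below $\sigma_0-\sigma$ while leaving it unchanged on the parameter boundary; a Brouwer degree computation with the localized masses $H_{\epsilon,i}$ (resp.\ the Poho\v{z}aev-type quantities $\widetilde H_{\epsilon,i}$) then produces a parameter at which the deformed function still carries the optimal distribution $s^0$ (resp.\ $t^0=(1,\dots,1)$), whence its energy is at least $\sigma_0+o_\epsilon(1)$ by Lemma \ref{L2-4} (ii), contradicting the upper bound $\sigma_0-\sigma$. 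Your final paragraph --- feeding the resulting critical point into Proposition \ref{L3.1} to obtain $\lambda_\epsilon\to\lambda_0$ and $E_\epsilon(v_\epsilon)\to\sigma_0$ --- is correct and matches the paper, but it presupposes an existence step that your minimization scheme does not deliver.
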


\textbf{Proof of Proposition \ref{L3.4}:}
Suppose to the contrary, we assume that there exists
$\epsilon_n\rightarrow 0$ such that for any sequence $\{\sigma_{\epsilon_n}\}$, there holds
 $\sigma_{\epsilon_n}\rightarrow \sigma_0$ and $E_{\epsilon_n}$ admits no critical points in
$\{u\in D^{\epsilon_n}_{d}| E_{\epsilon_n}\leq \sigma_{\epsilon_n}\}$. For convince, let $\epsilon=\epsilon_n$. It follows from Proposition \ref{L3.3} and Corollary
\ref{L3.2} that there exist $\sigma_1>0, \omega>0$ independent of $\epsilon$ and $\omega_{\epsilon}>0$ such that
\begin{align}
&\|E'_{\epsilon}|_{ S_1}(u)\|_*\geq \omega_{\epsilon},\ \text{for}\ u\in [\sigma_0-2\sigma_1\leq E_{\epsilon}\leq \sigma_{\epsilon}]\cap D^{\epsilon}_d,\notag\\
&\|E'_{\epsilon}|_{ S_1}(u)\|_*\geq \omega,\ \text{for}\ u\in [\sigma_0-2\sigma_1\leq E_{\epsilon}\leq \sigma_{\epsilon}]\cap (D^{\epsilon}_d\setminus D^{\epsilon}_{d/2}),\label{e27}
\end{align}
where
we define $[\sigma_0-2\sigma_1\leq E_{\epsilon}\leq \sigma_{\epsilon}]=\{u\in H^1(\mathbb{R}^N)|\sigma_0-2\sigma_1\leq E_{\epsilon}(u)\leq \sigma_{\epsilon}\}$. Similarly, we also denote  $[\sigma_0-2\sigma_0\leq E_{\epsilon}]$, $[E_{\epsilon}\leq \sigma_{\epsilon}]$ etc. From the conditions (\ref{e27}), we  can construct two gradient flows as follows:
\begin{lemma} \label{L3.5}
Assume that the conditions (\ref{e27}) hold. Then for any positive constant $\sigma<\min\left\{\sigma_1,\frac{d\omega}{16}\right\}$,  an $\epsilon_{\sigma}>0$ exists such that
 for any $\epsilon\in(0,\epsilon_{\sigma})$, there exists a deformation $\theta: S_{1}\rightarrow  S_{1}$ and satisfies the following properties:

$(i)$ $\theta(u)=u$, if $u\in S_{1}\setminus\left(D^{\epsilon}_d\cap\left[\sigma_0-2\sigma\leq E_{\epsilon}\right]\right)$.

$(ii)$ $E_{\epsilon}(\theta(u))\leq E_{\epsilon}(u)$, if $u\in S_1$.

$(iii)$ $\theta(u)\in D^{\epsilon}_d\cap\left[ E_{\epsilon}\leq  \sigma_0-\sigma\right]$, if $u\in D^{\epsilon}_{d/4}\cap
\left[E_{\epsilon}\leq \sigma_{\epsilon}\right]$.
\end{lemma}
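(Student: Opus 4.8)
The plan is to realize $\theta$ as the time-$T$ map of a cut-off negative pseudo-gradient flow on the manifold $S_1$, letting the descent structure of $E_\epsilon$ together with the two lower bounds in (\ref{e27}) do the work. Since the constrained gradient $\|E'_\epsilon|_{S_1}(u)\|_*$ is bounded away from zero on $[\sigma_0-2\sigma_1\le E_\epsilon\le\sigma_\epsilon]\cap D^\epsilon_d$ by (\ref{e27}), the standard pseudo-gradient lemma on a Hilbert manifold provides a locally Lipschitz tangent field $W:\mathcal N\to TS_1$, defined on a neighborhood of this set, with $\|W(u)\|\le1$ and $\langle E'_\epsilon(u),W(u)\rangle\ge\tfrac12\|E'_\epsilon|_{S_1}(u)\|_*$; tangency to $S_1$ is precisely what keeps the flow on the constraint $|u|_2^2=1$. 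I would then introduce the $1$-Lipschitz function $\rho(u)=\inf_{v\in D^\epsilon}\|u-v\|$, so that $D^\epsilon_r=\{\rho\le r\}$, and a locally Lipschitz cut-off $\chi:S_1\to[0,1]$ equal to $1$ on $D^\epsilon_{3d/4}\cap[\sigma_0-\sigma\le E_\epsilon\le\sigma_\epsilon]$ and equal to $0$ outside $D^\epsilon_d\cap[\sigma_0-2\sigma\le E_\epsilon\le\sigma_\epsilon]$; since $\sigma<\sigma_1$, this active band sits inside the set where (\ref{e27}) supplies its bounds, so $W$ is defined wherever $\chi>0$.

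The deformation is the flow $\eta(\cdot,u)$ solving $\partial_t\eta=-\chi(\eta)\,W(\eta)$ with $\eta(0,u)=u$; boundedness of $\chi W$ gives global existence and continuous dependence, and I set $\theta=\eta(T,\cdot)$ with $T=4\sigma/\omega_\epsilon$. Property (ii) is immediate from $\tfrac{d}{dt}E_\epsilon(\eta)=-\chi(\eta)\langle E'_\epsilon(\eta),W(\eta)\rangle\le0$. Property (i) follows because $\chi$ vanishes off $D^\epsilon_d\cap[\sigma_0-2\sigma\le E_\epsilon]$: at such a starting point the field is zero, so $u$ is an equilibrium of the ODE and $\eta(t,u)\equiv u$, while monotonicity of the energy guarantees that a point with $E_\epsilon<\sigma_0-2\sigma$ never re-enters the active band and a point with $\rho>d$ stays frozen.

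The heart of the matter, and the step I expect to be the main obstacle, is property (iii); I would argue by contradiction. Assume $u\in D^\epsilon_{d/4}\cap[E_\epsilon\le\sigma_\epsilon]$ yet $E_\epsilon(\eta(t,u))>\sigma_0-\sigma$ for all $t\in[0,T]$. First choose $\epsilon$ small enough (legitimate since $\sigma_\epsilon\to\sigma_0$ as $\epsilon\to0$) that $\sigma_\epsilon<\sigma_0+\sigma$, so the energy budget above the target is $E_\epsilon(u)-(\sigma_0-\sigma)<2\sigma$. The uniform bound $\omega$ on the annulus $D^\epsilon_d\setminus D^\epsilon_{d/2}$ now traps the trajectory: while the energy stays in $[\sigma_0-\sigma,\sigma_\epsilon]$ one has $\|E'_\epsilon|_{S_1}\|_*\ge\omega$ there, and since $|\tfrac{d}{dt}\rho(\eta)|\le\|\partial_t\eta\|\le\chi(\eta)$, any increase $\Delta\rho$ of $\rho$ inside the annulus costs energy at least $\tfrac{\omega}{2}\Delta\rho$; the budget thus forces $\Delta\rho\le 4\sigma/\omega<d/4$, strictly less than the annulus width $d/2$ precisely because $\sigma<\tfrac{d\omega}{16}$. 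Consequently $\eta(t,u)$ never leaves $D^\epsilon_{3d/4}\subset D^\epsilon_d$ while its energy exceeds the target, and there $\chi\equiv1$, so the first bound in (\ref{e27}) yields $\tfrac{d}{dt}E_\epsilon(\eta)\le-\tfrac12\omega_\epsilon$. Integrating over $[0,T]$ with $T=4\sigma/\omega_\epsilon$ produces an energy drop exceeding $2\sigma>\sigma_\epsilon-(\sigma_0-\sigma)$, contradicting $E_\epsilon(\eta(T,u))>\sigma_0-\sigma$. Hence $\eta(\cdot,u)$ meets $[E_\epsilon\le\sigma_0-\sigma]$ within time $T$; by monotonicity it stays there, and by the trapping it stays in $D^\epsilon_d$, giving $\theta(u)\in D^\epsilon_d\cap[E_\epsilon\le\sigma_0-\sigma]$. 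The delicate bookkeeping is the interplay of the $\epsilon$-uniform constant $\omega$, which fixes the width of the safety annulus through the choice $\sigma<\tfrac{d\omega}{16}$, and the possibly small $\omega_\epsilon$, which only enlarges the descent time $T$ without affecting the geometry.
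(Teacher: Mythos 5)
Your proposal is correct and follows essentially the same route as the paper: a cut-off negative pseudo-gradient flow on $S_1$ run for time $T=4\sigma/\omega_{\epsilon}$, with $(iii)$ obtained by contradiction through the same two-regime energy budget --- the $\epsilon$-uniform bound $\omega$ on the annulus traps the trajectory because $\sigma<\tfrac{d\omega}{16}$, and the bound $\omega_{\epsilon}$ then forces an energy drop of $2\sigma$ over $[0,T]$. Your explicit reduction to $\sigma_{\epsilon}<\sigma_0+\sigma$ for small $\epsilon$ is a point the paper's computation (ending at $\sigma_{\epsilon}-2\sigma$) uses only implicitly, so it is a welcome clarification rather than a deviation.
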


According to the compactness of $\mathcal{S}$, for each $u\in D^{\epsilon}_d$,  there exist $(u_1,\cdots,u_{ k})\in\mathcal{S}$ and $x_i\in(\mathcal{A}_i)^{\tau}$ such that $u=\zeta_{\epsilon}(x-x_i/\epsilon)u_i(x-x_i/\epsilon)+v$ where $\|v\|\leq d$, $v\in H^1(\mathbb{R}^N)$. This implies $\|u\|_{H^1(\cup_{i=1}^{ k}(\mathcal{A}_{\epsilon,i})^{\tau})^c}\leq 2d$ for $\epsilon$ small.
Define $
F_{\epsilon}^d=\left\{u\in H^1(\mathbb{R}^N):\ \inf\limits_{v\in D^{\epsilon}}\|u-v\|\leq d\right\}$.
Then we construct another gradient flow.
\begin{proposition}\label{L3.6}
Let $\epsilon>0$ be sufficiently small and fixed.
 Then there exists a map $J_{\epsilon}: D^{\epsilon}_d\rightarrow F^{\epsilon}_d$ such that

$(i)$ $J_{\epsilon}$ is a continuous function.

$(ii)$ $J_{\epsilon}(u)=u$ if $u(x)=0$ for all $x\in\left(\cup_{i=1}^{ k}(\mathcal{A}_{\epsilon,i})^{\tau}\right)^c$.

$(iii)$ For all $u\in D^{\epsilon}_d$, there holds
\begin{align}
&J_{\epsilon}(u)(x)=u(x)\ \text{for}\ x\in\cup_{i=1}^{ k}(\mathcal{A}_{\epsilon,i})^{\tau},\label{e28}\\
&|\nabla J_{\epsilon}(u)|^2_{L^2(\cup_{i=1}^{ k}\Omega_{\epsilon,i})^c}\leq c_{\epsilon},\label{e29}\\
&E_{\epsilon}(J_{\epsilon}(u))\leq E_{\epsilon}(u),\label{e30}
\end{align}
where $ c_{\epsilon}>0$ is independent of $u$ and satisfies $\lim\limits_{\epsilon\rightarrow0}c_{\epsilon}\rightarrow0$.
\end{proposition}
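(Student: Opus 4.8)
The plan is to build $J_{\epsilon}$ as a tail-minimizing operator, adapting the flow of \cite{BT} to the nonlocal functional $E_{\epsilon}$ in the spirit of \cite{CT}. Set $O_{\epsilon}:=\cup_{i=1}^{k}(\mathcal{A}_{\epsilon,i})^{\tau}$, the inner region on which (\ref{e28}) forces $J_{\epsilon}$ to act as the identity. For $u\in D^{\epsilon}_{d}$ I would freeze $u$ on $O_{\epsilon}$ and, over the affine class $\mathcal{C}_{u}:=\{w\in H^{1}(\mathbb{R}^{N}):\ w=u\ \text{on}\ O_{\epsilon}\}$ intersected with a small ball $\{\|w-u\|\le\rho\}$, minimise a coercive exterior energy of the form
\[
\Phi_{\epsilon,u}(w)=\frac12\int_{O_{\epsilon}^{c}}|\nabla w|^2+\frac{\lambda_0}{2}\int_{O_{\epsilon}^{c}}|w|^2-\frac{1}{2p}\int_{O_{\epsilon}^{c}}\int_{O_{\epsilon}^{c}}\frac{Q(\epsilon x)Q(\epsilon y)|w(x)|^p|w(y)|^p}{|x-y|^{\mu}},
\]
retaining the positive-definite quadratic part and only the self-interaction of the tail; I then glue $J_{\epsilon}(u):=u$ on $O_{\epsilon}$ and $J_{\epsilon}(u):=w_{\min}$ on $O_{\epsilon}^{c}$, where $w_{\min}$ is the minimiser. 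By Proposition \ref{L2-1} and the Sobolev embedding the nonlocal term is $O(\|w\|^{2p})$ with $2p>2$, hence a higher-order perturbation of the quadratic part on the small-tail regime $\|v\|\le 2d$ dictated by (\ref{e20}); this is what keeps $\Phi_{\epsilon,u}$ strictly convex and coercive on the small ball.

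This convexity and coercivity give a unique minimiser depending continuously on the frozen data, whence the continuity $(i)$. If $u\equiv 0$ on $O_{\epsilon}^{c}$ then the trace of $u$ on $\partial O_{\epsilon}$ vanishes, so $w\equiv 0$ lies in $\mathcal{C}_{u}$ and, being the unique minimiser of the positive-definite part while the self-interaction vanishes at $0$, it is the minimiser of $\Phi_{\epsilon,u}$; thus $J_{\epsilon}(u)=u$, which is $(ii)$. Since $J_{\epsilon}(u)=u$ on $O_{\epsilon}$ and $\|w_{\min}\|$ is controlled by the boundary data, $\|J_{\epsilon}(u)-u\|$ is small and $J_{\epsilon}(u)$ remains within distance $d$ of $D^{\epsilon}$, i.e. $J_{\epsilon}(u)\in F^{\epsilon}_{d}$.

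Identity (\ref{e28}) holds by construction. For the gradient bound (\ref{e29}) I would exploit that, since $(\mathcal{A}_i)^{2\tau}\subset\Omega_i$, the far region $(\cup_{i}\Omega_{\epsilon,i})^{c}$ is separated from $\partial O_{\epsilon}$ by a buffer of width of order $\tau/\epsilon\to\infty$. The minimiser solves the exterior Euler--Lagrange equation $-\Delta w_{\min}+\lambda_0 w_{\min}=(\text{nonlocal right-hand side})$ on $O_{\epsilon}^{c}$ with bounded trace; running Caccioppoli cut-offs across the buffer and using the exponential-type decay generated by the mass term $\lambda_0|w|^2$, I expect $|\nabla J_{\epsilon}(u)|^2_{L^2((\cup_{i}\Omega_{\epsilon,i})^{c})}\le c_{\epsilon}$ with $c_{\epsilon}\to 0$ uniformly in $u$. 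For the monotonicity (\ref{e30}) I would compare $E_{\epsilon}(J_{\epsilon}(u))$ with $E_{\epsilon}(u)$: the contributions on $O_{\epsilon}$ cancel, minimality of $w_{\min}$ makes the Dirichlet-plus-self-interaction part non-increasing, and there remain only the discrepancy created by the auxiliary mass term and the nonlocal coupling between the bumps (essentially supported in $O_{\epsilon}$) and the two tails $u|_{O_{\epsilon}^{c}}$ and $w_{\min}$.

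The coupling just mentioned is the main obstacle, and it is where $p<2$ departs from $p\ge 2$. Written as $\int Q(\epsilon\cdot)\,\Psi_{\mathrm{bump}}\,|\mathrm{tail}|^{p}$ with $\Psi_{\mathrm{bump}}$ the Riesz potential of the bumps, it scales like the $p$-th power of the tail; for $p<2$ this is super-quadratic-dominant near $0$ and is not controlled by the $H^{1}$-norm over the unbounded exterior, so minimising the \emph{full} $E_{\epsilon}$ would be unbounded below and would wreck $(ii)$ — this is exactly why the self-interaction is kept but the coupling is treated separately. To close (\ref{e30}) I would split each tail at the buffer: on the far part, where the distance to the bumps is $\ge R\sim\tau/\epsilon$, Proposition \ref{L2-2} bounds the coupling by $D_{R}(\cdots)\to 0$; on the near annulus the coupling is small because both tails are quantitatively small there. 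Balancing the guaranteed Dirichlet decrease against this $o(1)$ coupling error, together with the mass-term discrepancy, uniformly for $u\in D^{\epsilon}_{d}$, is the delicate point, and it is precisely the use of Proposition \ref{L2-2} in place of the exponential decay of bound states (unavailable for $p<2$) that lets the construction go through without a penalisation scheme.
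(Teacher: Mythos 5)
Your construction is a genuinely different route from the paper's (a constrained minimisation of an auxiliary exterior functional, versus the paper's explicit tail--decreasing ODE flow), but as written it has two genuine gaps, both located exactly at the points you yourself flag as ``delicate''. First, (\ref{e30}) is not established. Minimality of $w_{\min}$ only controls $\Phi_{\epsilon,u}$, which differs from the exterior part of $E_{\epsilon}$ by the auxiliary mass term $\tfrac{\lambda_0}{2}\int_{O_{\epsilon}^{c}}|w|^2$ and by the bump--tail coupling. The mass-term discrepancy $\tfrac{\lambda_0}{2}\bigl(\int_{O_{\epsilon}^{c}}|w_{\min}|^2-\int_{O_{\epsilon}^{c}}|u|^2\bigr)$ has indeterminate sign and is of size $O(d^2)$, not $o_{\epsilon}(1)$, and there is no ``guaranteed Dirichlet decrease'' to absorb it: for a $u\in D^{\epsilon}_d$ whose tail is already essentially optimal for $\Phi_{\epsilon,u}$ the minimisation gains nothing, while the error terms need not vanish. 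So the pointwise inequality $E_{\epsilon}(J_{\epsilon}(u))\leq E_{\epsilon}(u)$, which is what (\ref{e32}) and (\ref{e35}) actually use, does not follow. Second, your argument for (\ref{e29}) rests on ``exponential-type decay generated by the mass term'', i.e.\ on treating the Euler--Lagrange equation as a perturbation of $-\Delta+\lambda_0$ across the buffer of width $\sim\tau/\epsilon$. For $p<2$ the Choquard nonlinearity behaves like $|w|^{p-1}$ with $p-1<1$ near zero, hence is \emph{not} subordinate to $\lambda_0 w$ for small $w$; this is precisely the obstruction the paper identifies as the reason penalisation fails in the sublinear range. A single Caccioppoli estimate across the buffer only yields $|\nabla w_{\min}|^2_{L^2((\cup_i\Omega_{\epsilon,i})^{c})}\leq o_{\epsilon}(1)+Cd^{2p}$, and iterating it to get a bound tending to $0$ with $\epsilon$ reintroduces the decay problem you cannot solve for $p<2$. (A smaller issue: your minimiser may sit on the boundary of the ball $\{\|w-u\|\leq\rho\}$, in which case the Euler--Lagrange equation, uniqueness and the continuity of $u\mapsto w_{\min}$ all need separate justification; and strict convexity of $\Phi_{\epsilon,u}$ requires $w|_{O_{\epsilon}^{c}}$ to be uniformly small, which is not automatic since the bumps, centred at points $x_i/\epsilon$ possibly near $\partial(\mathcal{A}_{\epsilon,i})^{\tau}$, protrude into $O_{\epsilon}^{c}$.)

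The paper avoids all of this by never solving an exterior problem. It sets $\vartheta_{\epsilon}(u)=|\nabla u|^2_{L^2((\cup_i\Omega_{\epsilon,i})^{c})}$ and, in Lemma \ref{L4.1}, tests with $\xi_u=\sum_i\chi^i_{\epsilon}u\in H^1_0\bigl((\cup_i(\mathcal{A}_{\epsilon,i})^{\tau})^{c}\bigr)$, a cut-off of $u$ itself, showing that the \emph{same} direction satisfies both $E'_{\epsilon}(u)\xi_u\geq C(\vartheta_{\epsilon}(u)-c_{\epsilon})$ and $\vartheta'_{\epsilon}(u)\xi_u\geq C(\vartheta_{\epsilon}(u)-c_{\epsilon})$; here Proposition \ref{L2-2} controls only the far-field nonlocal interaction, with no pointwise decay input. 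Running the ODE (\ref{e42}) along $-\gamma(Z)$ until time $T_{\epsilon}$ makes (\ref{e29}) hold by construction (Lemma \ref{L4.3} $(ii)$, $(iv)$), makes (\ref{e30}) hold because $E_{\epsilon}$ is non-increasing along the flow, and gives (\ref{e28}) and $(ii)$ for free since the vector field vanishes on $\cup_i(\mathcal{A}_{\epsilon,i})^{\tau}$. If you want to keep a minimisation formulation you would have to replace $\Phi_{\epsilon,u}$ by a quantity whose decrease is directly comparable to the decrease of $E_{\epsilon}$ itself; the paper's choice of a common descent direction is the mechanism that accomplishes this.
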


The proof of Lemma \ref{L3.5} and proposition \ref{L3.6}  are given in the appendix.

Next, we verify the existence of critical points based on the two gradient flows established in Lemma \ref{L3.5} and proposition \ref{L3.6}, combined with degree theory. Through the energy geometry of the limit system (\ref{e8}), we can discuss the mass subcritical and mass supercritical cases respectively. That is, the case of  $p\in \left(\frac{2N-\mu}{N},\frac{2N-\mu+2}{N}\right)$ or $p\in \left(\frac{2N-\mu+2}{N},\frac{2N-\mu}{N-2}\right)$.

\textbf{Case 1: $L^2$-subcritical case $p\in \left(\frac{2N-\mu}{N},\frac{2N-\mu+2}{N}\right)$.}

Fix $x_i\in \mathcal{A}_{i}$ and set
$x_{\epsilon}^i=\frac{1}{\epsilon} x_i$ for $i=1,2,\cdots, k$.
Then we define the $( k-1)$-dimensional initial path by $
L_{\epsilon,s}(x)=T_{\epsilon}\sum_{i=1}^{ k}(\zeta_{\epsilon} u_{s_i})(x-x_{\epsilon}^i)\quad \text{for}\ s=(s_1, \cdots,s_{ k})\in \mathcal{S}_{k}$,
where $u_{s_i}\in \overline{S}_{b_i,s_i}$ and $
T_{\epsilon}:=\left|\sum^{ k}_{i=1}(\zeta_{\epsilon}u_{s_i})(\cdot-x_{\epsilon}^i)\right|^{-1}_{2}$.
Define $
K_{\nu}:=\{s\in S^+_{ k-1}|\ |s-s^0|\leq\nu\}$
and fix $\nu>0$ small enough (independent of $\epsilon$ small) such that $
L_{\epsilon,s}\in D^{\epsilon}_{d/4}\quad \text{for}\ s\in K_{\nu} $
and $T_{\epsilon}\rightarrow 1$ as $\epsilon\rightarrow 0$ uniformly in $K_{\nu}$.
Here, $s^0$ is given as (\ref{e13}).
We denote
\begin{align*}
\sigma_{\epsilon}=\sup\limits_{s\in K_{\nu}} E_{\epsilon}(L_{\epsilon,s}).
\end{align*}
Clearly, it holds that $\lim\limits_{\epsilon\rightarrow0}\sigma_{\epsilon}=\sigma_0$.
Combining with Lemma \ref{L2-4} $(ii)$, we easily obtain the following conclusions:
there is $\sigma\in \left(0,\min\{\sigma_1,\frac{d\omega}{16}\}\right)$ such that for any $\epsilon\in (0,\epsilon_{\sigma})$,
\begin{align}\label{e31}
\sup\limits_{s\in \partial K_{\nu}}E_{\epsilon}(L_{\epsilon,s})<\sigma_0-2\sigma.
\end{align}
It follows from lemma \ref{L3.5} and the inequality (\ref{e31}) that
\begin{align*}
&\theta\circ L_{\epsilon,s}=L_{\epsilon,s},\quad \text{if}\ s\in\partial K_{\nu},\\
&E_{\epsilon}\circ\theta\circ L_{\epsilon,s}\leq \sigma_0-\sigma\ \text{and}\ \theta\circ L_{\epsilon,s}\in D^{\epsilon}_d,\ \text{if}\ s\in K_{\nu}.
\end{align*}
Then from Proposition \ref{L3.6}, we obtain that
\begin{align}\label{e32}
\begin{cases}
E_{\epsilon}\circ J_{\epsilon}\circ \theta\circ L_{\epsilon,s}\leq \sigma_0-\sigma,\
\text{if}\ s\in K_{\nu},\\
\|\nabla J_{\epsilon}(\theta(L_{\epsilon,s}))\|_{L^2(\cup_{i=1}^{ k}\Omega_{\epsilon,i})^c}= o_{\epsilon}(1),\ \\
J_{\epsilon}\circ\theta\circ L_{\epsilon,s}(x)=\theta\circ L_{\epsilon,s}(x),\quad \text{if}\
x\in \cup_{i=1}^{ k}(\mathcal{A}_{\epsilon,i})^{\tau},\\
J_{\epsilon}\circ\theta\circ L_{\epsilon,s}(x)=L_{\epsilon,s}(x),\quad \text{if}\ s\in\partial K_{\nu}\ \text{and}\
x\in \cup_{i=1}^{ k}(\mathcal{A}_{\epsilon,i})^{\tau}.
\end{cases}
\end{align}
For each $i=1,\cdots, k$, we define
\begin{equation*}
H_{\epsilon,i}(v)=\left(\int_{\Omega_{\epsilon,i}}|v|^2\right)\left(\sum\limits_{i=1}^{ k}\int_{\Omega_{\epsilon,i}}|v|^2\right)^{-1}
\quad \text{for}\ v\in H^1(\mathbb{R}^N).
\end{equation*}
Then we prove that there exists $s^1\in K_{\nu}$ such that $H_{\epsilon,i}(J_{\epsilon}(\theta(L_{\epsilon,s^1})))=s_i^0, (i=1,\cdots,k)$.
For each $s\in K_{\nu}$, we define $
\rho_{\epsilon}(s):=(H_{\epsilon,1}\circ J_{\epsilon}\circ\theta\circ L_{\epsilon,s},\cdots,H_{\epsilon, k}\circ J_{\epsilon}\circ\theta\circ L_{\epsilon,s})$.
Note that
\begin{equation*}
H_{\epsilon,i}\circ L_{\epsilon,s}\rightarrow |u_{s_i}|^2_2=s_i\quad \text{as}\ \epsilon\rightarrow 0\ \text{uniformly for every}\ s\in  K_{\nu}.
\end{equation*}
Combining with (\ref{e32}),
we have $s^1\not\in \rho_{\epsilon}(\partial K_{\nu})$ and the Brower degree $
\deg(\rho_{\epsilon},K_{\nu},s^0)=\deg(id, K_{\nu}, s^0)=1$ for $\epsilon$ small.
This implies there exists $s^1\in K_{\nu}$ such that $H_{\epsilon,i}(J_{\epsilon}(\theta(L_{\epsilon,s^1})))=s^0_i=|u_i|_2$.
Now denote
\begin{align*}
\phi_{\epsilon}:=
\begin{cases}
J_{\epsilon}(\theta(L_{\epsilon,1}))\quad &\text{if}\  k=1,\\
J_{\epsilon}(\theta(L_{\epsilon,s^1}))\quad &\text{if}\  k\geq 2,
\end{cases}\quad
\phi_{i,\epsilon}:=\zeta_{\epsilon,i}\phi_{\epsilon},
\end{align*}
where $\zeta_{\epsilon,i}\in C^{\infty}_0(\Omega'_{\epsilon,i},[0,1])$ is a cut-off function such that $\zeta_{\epsilon,i}=1$ on
$\Omega_{\epsilon,i}$
and $|\nabla \zeta_{\epsilon,i}|\leq C\epsilon$ for each $i=1,2,\cdots, k$ and some $C>0$, where $\Omega'_{i}$ is an open neighborhood of $\overline{\Omega}_{i}$ and is disjoint with $\Omega'_{j}$ for $j\neq i$.
Therefore, it follows from $\|\nabla \phi_{\epsilon}\|_{L^2(\cup_{i=1}^{ k}\Omega_{\epsilon,i})^c}=o_{\epsilon}(1)$ that we deduce $
|\phi_{i,\epsilon}|^2_2
=H_{\epsilon,i}(\phi_{\epsilon})+o_{\epsilon}(1)=s_i^0+o_{\epsilon}(1)$,
where  $o_{\epsilon}(1)\rightarrow0$ as $\epsilon\rightarrow 0$ uniformly. Then we have
\begin{align}
E_{b_i}(\phi_{i,\epsilon})\geq E_{b_i}(|\phi_{i,\epsilon}|^2_2)\geq E_{b_i}(s_i^0)+o_{\epsilon}(1).\label{e33}
\end{align}
From (\ref{e32}) and (\ref{e33}) we deduce that
\begin{align*}
\sigma_0-\sigma\geq& E_{\epsilon}(\phi_{\epsilon})\geq \sum\limits_{i=1}^{ k}\left(\frac{1}{2}\int_{\Omega_{\epsilon,i}}|\nabla \phi_{\epsilon}|^2-b_i
\int_{\Omega_{\epsilon,i}}\int_{\Omega_{\epsilon,i}}\frac{|\phi_{\epsilon}|^p|\phi_{\epsilon}|^p}{|x-y|^{\mu}}\right)+o_{\epsilon}(1)
\\ =&\sum\limits_{i=1}^{ k}E_{b_i}(\phi_{i,\epsilon})+o_{\epsilon}(1)
\geq \sigma_0+o_{\epsilon}(1),
\end{align*}
which is a contradiction. This completes the proof of proposition \ref{L3.4} in the $L^2$-subcritial case.

\textbf{Case 2: $L^2$-supercritical case $p\in \left(\frac{2N-\mu+2}{N},\frac{2N-\mu}{N-2}\right)$.}

We  also define the $( k-1)$-dimensional initial path by $
\widetilde{L}_{\epsilon,t}(x)=\widetilde{K}_{\epsilon}\sum_{i=1}^{ k}t_i^{N/2}(\zeta_{\epsilon} v_i)(t_i(\cdot-x_{\epsilon}^i)),\ t=(t_1,\cdots,t_{ k})\in(0,\infty)^{ k}$. Here,
$\widetilde{K}_{\epsilon}:=\left|\sum^{ k}_{i=1}t_i^{N/2}(\zeta_{\epsilon}v_i(t_i(\cdot-x_{\epsilon}^i))\right|^{-1}_{2}$ and  $v_i\in  \overline{S}_{b_i,s^0_i}$, $i=1,\cdots,k$.
Note that a constant $h>0$ exists such that $\widetilde{L}_{\epsilon,t}\in D^{\epsilon}_{d/4}$ and $\widetilde{K}_{\epsilon}\rightarrow1$ as
 $\epsilon\rightarrow 0$ uniformly for $t\in [1-h,1+h]^{ k}$.  Similar to case 1, we also define
\begin{align*}
\sigma_{\epsilon}:=\max\limits_{t\in [1-h,1+h]^{ k}}E_{\epsilon}(\widetilde{L}_{\epsilon,t})
\end{align*}
and has the following conclusions:
$\lim\limits_{\epsilon\rightarrow0}\sigma_{\epsilon}=\sigma_0$ and there exists $k\in \left(0,\min\{\sigma_1,\frac{d\omega}{16}\}\right)$ such that for any $\epsilon\in (0,\epsilon_{\sigma})$
\begin{align}\label{e34}
\sup\limits_{t\in\partial[1-h,1+h]^{ k}}E_{\epsilon}(\widetilde{L}_{\epsilon,t})<\sigma_0-2\sigma.
\end{align}
From Lemma \ref{L3.5} and (\ref{e34}), we conclude that
\begin{align*}
&\theta\circ \widetilde{L}_{\epsilon,t}=\widetilde{L}_{\epsilon,t},\ \text{if}\ t\in \partial[1-h,1+h]^{ k},\\
&E_{\epsilon}\circ\theta\circ \widetilde{L}_{\epsilon,t}\leq \sigma_0-\sigma\ \text{and}\ \theta\circ \widetilde{L}_{\epsilon,t}\in D^{\epsilon}_d,\
\text{for}\ t\in [1-h,1+h]^{ k}.
\end{align*}
From the above equalities, one has
\begin{align}\label{e35}
\begin{cases}
E_{\epsilon}\circ J_{\epsilon}\circ \theta\circ \widetilde{L}_{\epsilon,t}\leq \sigma_0-\sigma,\
\text{if}\ t\in [1-h,1+h]^{ k},\\
\|\nabla J_{\epsilon}(\theta(\widetilde{L}_{\epsilon,t}))\|_{L^2(\cup_{i=1}^{ k}\Omega_{\epsilon,i})^c}= o_{\epsilon}(1),\\
J_{\epsilon}\circ\theta\circ \widetilde{L}_{\epsilon,t}(x)=\theta\circ \widetilde{L}_{\epsilon,t}(x),\quad \text{if}\
x\in \cup_{i=1}^{ k}\Omega_{\epsilon,i},\\
J_{\epsilon}\circ\theta\circ \widetilde{L}_{\epsilon,t}(x)=\widetilde{L}_{\epsilon,t}(x),\quad \text{if}\ t\in \partial[1-h,1+h]^{ k}\ \text{and}\
x\in \cup_{i=1}^{ k}\Omega_{\epsilon,i}.
\end{cases}
\end{align}

Set
\begin{align*}
\widetilde{H}_{\epsilon,i}(v)=\left(\int_{\Omega_{\epsilon,i}}|\nabla v|^2\right)^{\frac{1}{-Np+2+2N-\mu}}\left(\frac{b_i(Np-\mu)}
{2p}\int_{\Omega_{\epsilon,i}}\int_{\Omega_{\epsilon,i}}\frac{|v|^p|v|^p}{|x-y|^{\mu}}\right)^{\frac{1}{Np-2-\mu}}, v\in H^1(\mathbb{R}^N)
\end{align*}
and  the map $\widetilde{\rho}_{\epsilon}(t)=\left(\widetilde{H}_{\epsilon,1}\circ J_{\epsilon}\circ\theta\circ \widetilde{L}_{\epsilon,t_{1}},\cdots,
\widetilde{H}_{\epsilon, k}\circ J_{\epsilon}\circ\theta\circ \widetilde{L}_{\epsilon,t_{ k}}\right)$.
Since $\mathcal{P}_{b_i}(v_i)=0$, i.e.,
\begin{align*}
\int_{\mathbb{R}^N}|\nabla v_i|^2=\frac{b_i(Np-\mu)}
{2p}\int_{\mathbb{R}^N}\int_{\mathbb{R}^N}\frac{|v_i|^p|v_i|^p}{|x-y|^{\mu}},
\end{align*} we easily get that
\begin{align*}
\lim\limits_{\epsilon\rightarrow0}\widetilde{H}_{\epsilon,i}(\widetilde{L}_{\epsilon,t})
=&\left(\int_{\mathbb{R}^N}|\nabla t_i^{N/2}v_i(t_i\cdot)|^2\right)^{\frac{1}{-Np+2+2N-\mu}}\left(\frac{b_i(Np-\mu)}
{2p}\int_{\mathbb{R}^N}\int_{\mathbb{R}^N}
\frac{|t_i^{N/2}v_i(t_i\cdot)|^p|t_i^{N/2}v_i(t_i\cdot)|^p}{|x-y|^{\mu}}\right)^{\frac{1}{Np-2-\mu}}\\=&t_i
\end{align*}
uniformly for $t\in [1-h,1+h]^{ k}$ ($i=1,\cdots, k$).
Then it follows from (\ref{e35}) that $t^0:=(1,\cdots,1)\in \mathbb{R}^{ k}\setminus \widetilde{\rho}_{\epsilon}(\partial [1-h,1+h]^{ k})$ and the Brower degree $\deg(\widetilde{\rho}_{\epsilon}, [1-h,1+h]^{ k},t^0)=\deg(id, [1-h,1+h]^{ k},t^0)=1$ for $\epsilon$ small.
This shows that $\overline{t}\in[1-h,1+h]^{ k}$ exists such that
\begin{equation*}
\widetilde{H}_{\epsilon,i}(J_{\epsilon}(\theta(\widetilde{L}_{\epsilon,\overline{t}})))=1,\quad \text{for each}\ i=1,\cdots, k.
\end{equation*}
Now we denote $v_{\epsilon}^0:=J_{\epsilon}(\theta(\widetilde{L}_{\epsilon,\overline{t}}))$ and $v_{\epsilon}^i:=\zeta_{\epsilon,i}v_{\epsilon}^0\left(\sum_{i=1}^{ k}|\zeta_{\epsilon,i}v_{\epsilon}^0|^2_2\right)^{-1},\ i=1,\cdots, k$.
It follows from $\|\nabla v_{\epsilon}^0\|_{L^2(\cup_{i=1}^{ k}\Omega_{\epsilon,i})^c}= o_{\epsilon}(1)$ that $\sum_{i=1}^{ k}|\zeta_{\epsilon,i}v_{\epsilon}^0|^2_2=1+o_{\epsilon}(1)$
and
\begin{align*}
t_{\epsilon,i}:=&\left(\int_{\mathbb{R}^N}|\nabla v_{\epsilon}^i|^2\right)^{\frac{1}{-Np+2+2N-\mu}}\left(\frac{b_i(Np-\mu)}
{2p}\int_{\mathbb{R}^N}\int_{\mathbb{R}^N}\frac{|v_{\epsilon}^i|^p|v_{\epsilon}^i|^p}{|x-y|^{\mu}}\right)^{\frac{1}{Np-2-\mu}}\\
=&\widetilde{H}_{\epsilon,i}(v_{\epsilon}^0)+o_{\epsilon}(1)=1+o_{\epsilon}(1),
\end{align*}
where $o_{\epsilon}(1)\rightarrow 0$ as $\epsilon\rightarrow 0$.
Combining with  Lemma \ref{L2-4} $(ii)$,
we have
\begin{align}\label{e36}
\sum\limits_{i=1}^{ k}E_{b_i}\left(v_{\epsilon}^i\right)+o_{\epsilon}(1)=
\sum\limits_{i=1}^{ k}E_{b_i}\left(t_{\epsilon,i}^{-\frac{N}{2}}v_{\epsilon}^i(t_{\epsilon,i}^{-1}\cdot)\right)\geq \sum_{i=1}^{ k}E_{b_i}(|v_{\epsilon}^i|^2_2)\geq \sigma_0
.
\end{align}
 By  (\ref{e35}) and (\ref{e36}) , we obtain that
 \begin{align*}
  \sigma_0-\sigma\geq E_{\epsilon}(v_{\epsilon}^0)\geq &\sum\limits_{i=1}^{ k}\left(\int_{\Omega_{\epsilon,i}}|\nabla v_{\epsilon}^0|^2
  -\frac{b_i}{2p}\int_{\Omega_{\epsilon,i}}\int_{\Omega_{\epsilon,i}}
  \frac{|v_{\epsilon}^0|^p|v_{\epsilon}^0|^p}{|x-y|^{\mu}}\right)+o_{\epsilon}(1)\\
  \geq&\sum\limits_{i=1}^{ k}E_{b_i}(v_{\epsilon}^i)+o_{\epsilon}(1)\geq \sigma_0+o_{\epsilon}(1),
 \end{align*}
which is a contraction. Hence, we complete the proof of Proposition \ref{L3.4} in the $L^2$-supercritical case.

\textbf{Proof of Theorem \ref{T1.5}:} It is a direct consequence of Lemma \ref{L2-6} and Proposition \ref{L3.4}.

\section{Appendix}

\textbf{The proof of Lemma \ref{L3.5}:} Define locally Lipschitz functions $Z_1, Z_2:  S_{1}\rightarrow [0,1]$ such that
\begin{align*}
Z_1(v)=\begin{cases} 0,\quad\ v\notin D^{\epsilon}_d,\\1,\quad\ v\in D^{\epsilon}_{d/2}, \end{cases}\quad
Z_2(v)=\begin{cases} 0,\quad\ v\notin \left[\sigma_0-2\sigma\leq E_{\epsilon}\right]\\1,\quad\ v\in \left[\sigma_0-\sigma\leq E_{\epsilon}\right],.\end{cases}
\end{align*}
Then we obtain a deformation flow defined by
\begin{align*}
\begin{cases}
\frac{\rm{d}\overline{\theta}}{\rm{d}s}(s,v)=-Z_1(\overline{\theta}(s,v))Z_2(\overline{\theta}(s,v))
\beta_{\epsilon}(\overline{\theta}(s,v)),\\
\overline{\theta}(0,v)=v,
\end{cases}
\end{align*}
has a unique solution $\overline{\theta}(\cdot,v)$ and
$\overline{\theta}\in C([0,+\infty)\times S_{1}, S_{1})$ satisfies that $\overline{\theta}(\cdot,v)\in D^{\epsilon}_d$ for any $\ v\in D^{\epsilon}_d$ and $\overline{\theta}(\cdot,v)\equiv v$ for any $v\notin \left[\sigma_0-2\sigma_1\leq E_{\epsilon}\right]\cap D^{\epsilon}_d$.
Here $\beta_{\epsilon}: S_{1}\rightarrow H^1(\mathbb{R}^N)$ is a locally Lipschitz continuous tangent vector field satisfying
$\beta_{\epsilon}(v)\in T_{v} S_{1}$ and
\begin{align}\label{e37}
&\|\beta_{\epsilon}(v)\|\leq1\ \text{and}\ \langle E'_{\epsilon}|_{ S_{1}}(v),\beta_{\epsilon}(v)\rangle\geq 0\ \text{for}\ v\in S_{1},\notag\\
&\langle E'_{\epsilon}|_{ S_{1}}(v),\beta_{\epsilon}(v)\rangle\geq\frac{\omega_{\epsilon}}{2}\ \text{for}\ v\in D^{\epsilon}_d\cap
[\sigma_0-2\sigma_1\leq E_{\epsilon}\leq \sigma_{\epsilon}],\\
&\langle E'_{\epsilon}|_{ S_{1}}(v),\beta_{\epsilon}(v)\rangle\geq \frac{\omega}{2}\ \text{for}
\ v\in (D^{\epsilon}_d\setminus D^{\epsilon}_{d/4})\cap[\sigma_0-2\sigma_1\leq E_{\epsilon}\leq \sigma_{\epsilon}].\notag
\end{align}
The existence of such a $\beta_{\epsilon}$ follows from \cite{R, W}.
For $v\in  S_{1}$ and $s\geq0$, we compute that
\begin{align}\label{e38}
\frac{\rm{d}}{\rm{d}s} E_{\epsilon}(\overline{\theta}(s,v))
\begin{cases}
=-Z_1(\overline{\theta}(s,v))Z_2(\overline{\theta}(s,v))
\langle E'_{\epsilon}|_{ S_{1}}(\overline{\theta}(s,v)),\beta_{\epsilon}(\overline{\theta}(s,v))\rangle\leq0,\\
\leq -\omega_{\epsilon}/2,\ \text{for}\ v\in D^{\epsilon}_d\cap
[\sigma_0-2\sigma_1\leq E_{\epsilon}\leq \sigma_{\epsilon}],\\
\leq -\omega/2,\ \text{for}
\ v\in (D^{\epsilon}_d\setminus D^{\epsilon}_{d/4})\cap[\sigma_0-2\sigma_1\leq E_{\epsilon}\leq \sigma_{\epsilon}].
\end{cases}
\end{align}
From (\ref{e38}), we can set
$\theta:=\overline{\theta}\left(\frac{4\sigma}{\omega_{\epsilon}},v\right)$. Then it is obvious that $\theta$  satisfies $(i)$ and $(ii)$.
To prove $(iii)$, we argue by contradiction and assume that there exists $v_{\epsilon}\in D^{\epsilon}_{d/4}\cap\left[E_{\epsilon}\leq \sigma_0\right]$ such that
\begin{equation}
E_{\epsilon}(\overline{\theta}(s,v_{\epsilon}))> \sigma_0-\sigma,\quad \text{for all}\ s\in\left[0,\frac{4\sigma}{\omega_{\epsilon}}\right].\label{e39}
\end{equation}
If $\overline{\theta}\in D^{\epsilon}_{d/2}$ for every $s\in \left[0,\frac{4\sigma}{\omega_{\epsilon}}\right]$, then using (\ref{e37}) and (\ref{e38}) one has
\begin{align*}
E_{\epsilon}(\theta(v_{\epsilon}))=E_{\epsilon}\left(\overline{\theta}\left(\frac{4\sigma}{\omega_{\epsilon}},
v_{\epsilon}\right)\right)
=&\int^{\frac{4\sigma}{\omega_{\epsilon}}}_0\frac{\rm{d}}{\rm{d}s}E_{\epsilon}(\overline{\theta}(s,v_{\epsilon}))ds
+E_{\epsilon}(v_{\epsilon})
\\ \leq&-\int_{0}^{\frac{4\sigma}{\omega_{\epsilon}}}\frac{\omega_{\epsilon}}{2}ds+\sigma_{\epsilon}\\
=&\sigma_{\epsilon}-2\sigma,
\end{align*}
which contradicts with (\ref{e39}). If $\overline{\theta}(s,v_{\epsilon})\notin D^{\epsilon}_{d/2}$ for some $s\in\left[0,\frac{4\sigma}{\omega_{\epsilon}}\right]$,
then we can find an interval $[s_1, s_2]\subset\left[0,\frac{4\sigma}{\omega_{\epsilon}}\right]$ such that $\overline{\theta}(s_1,v_{\epsilon})\in\partial D^{\epsilon}_{d/4}$,
 $\overline{\theta}(s_2,v_{\epsilon})\in\partial D^{\epsilon}_{d/2}$
and $\overline{\theta}(s,v_{\epsilon})\subset D^{\epsilon}_{d/2}\setminus D^{\epsilon}_{d/4}$ for all $s\in (s_1, s_2)$. Since
\begin{equation*}
s_2-s_1\geq\int_{s_1}^{s_2}\|\beta_{\epsilon}(\overline{\theta}(s,v_{\epsilon}))\|ds\geq \left\|
\int_{s_1}^{s_2}\frac{\rm{d}\overline{\theta}}{\rm{d}s}(s,v_{\epsilon})ds\right\|=\|\overline{\theta}(s_2,v_{\epsilon})
-\overline{\theta}(s_1,v_{\epsilon})\|\geq d/4,
\end{equation*}
it follows from (\ref{e38}) and (\ref{e39}) that
\begin{align*}
E_{\epsilon}(\theta(v_{\epsilon}))=E_{\epsilon}\left(\overline{\theta}\left(\frac{4\sigma}{\omega_{\epsilon}},v_{\epsilon}\right)\right)
\leq E_{\epsilon}(\overline{\theta}(s_2,v_{\epsilon}))\leq &\int_{s_1}^{s_2}
\frac{\rm{d}}{\rm{d}s}E_{\epsilon}(\overline{\theta}(s,v_{\epsilon}))+
E_{\epsilon}(\overline{\theta}(s_1,v_{\epsilon}))\\
\leq& -\frac{\omega}{2}(s_2-s_1)+\sigma_{\epsilon}\\
\leq& \sigma_{\epsilon}-2\sigma,
\end{align*}
which also contradicts with (\ref{e39}).

\textbf{The proof of Proposition \ref{L3.6}:}

Set
$\vartheta_{\epsilon}(u): D^{\epsilon}_d\rightarrow \mathbb{R}$ by $
\vartheta_{\epsilon}(u)=|\nabla u|^2_{L^2(\cup_{i=1}^{ k}\Omega_{\epsilon,i})^c}$,
which will play important roles to construct $J_{\epsilon}$.
\begin{lemma}\label{L4.1}
Suppose that $u\in D^{\epsilon}_d$. Then there exists a $\xi_u\in H^1_0(\cup_{i=1}^{ k}(\mathcal{A}_{\epsilon,i})^{\tau})^c$ and satisfies the following conditions:

$(i)$ There holds $
\|\xi_u\|\leq 2d$.

$(ii)$ We have
\begin{align}
E'_{\epsilon}(u)\xi_u\geq C\left(|\nabla u|^2_{L^2(\cup_{i=1}^{ k}\Omega_{\epsilon,i})^c}-c_{\epsilon}\right),\label{e40}
\end{align}
where $c_{\epsilon}$ is independent of $p, u$ and satisfies $\lim\limits_{\epsilon\rightarrow0}c_{\epsilon}=0$.

$(iii)$ For any $u\in D^{\epsilon}_d$,
\begin{align}
\vartheta'_{\epsilon}(u)\xi_u\geq C\left(|\nabla u|^2_{L^2(\cup_{i=1}^{ k}\Omega_{\epsilon,i})^c}-c_{\epsilon}\right)\label{e41}
\end{align}
where $C, c_{\epsilon}>0$ are as in $(ii)$.
\end{lemma}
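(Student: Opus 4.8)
The plan is to take $\xi_u$ to be a cut-off of $u$ itself, i.e.\ the tail-minimizing direction in the spirit of \cite{CT,BT} adapted to the present constraint. Fix a smooth $\chi_\epsilon$ with $0\le\chi_\epsilon\le1$, $\chi_\epsilon\equiv0$ on $\cup_{i=1}^{k}(\mathcal{A}_{\epsilon,i})^{3\tau/2}$, $\chi_\epsilon\equiv1$ on $\big(\cup_{i=1}^{k}\Omega_{\epsilon,i}\big)^c$ and $|\nabla\chi_\epsilon|\le C\epsilon$; this is possible because $(\mathcal{A}_i)^{2\tau}\subset\Omega_i$, so the transition layer $\Omega_{\epsilon,i}\setminus(\mathcal{A}_{\epsilon,i})^{3\tau/2}$ has width of order $\tau/\epsilon$. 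Put $\xi_u:=\chi_\epsilon u$. Since $\chi_\epsilon$ vanishes on $\cup_{i=1}^k(\mathcal{A}_{\epsilon,i})^{\tau}$, indeed $\xi_u\in H^1_0\big((\cup_{i=1}^k(\mathcal{A}_{\epsilon,i})^{\tau})^c\big)$. Conditions $(i)$ and $(iii)$ are then almost immediate. For $(i)$, using $0\le\chi_\epsilon\le1$ and the a priori bound $\|u\|_{H^1((\cup_i(\mathcal{A}_{\epsilon,i})^{\tau})^c)}\le 2d$ recorded before Proposition \ref{L3.6}, one gets $\|\xi_u\|\le\|u\|_{H^1(\mathrm{supp}\,\chi_\epsilon)}+C\epsilon\le 2d$ for $\epsilon$ small. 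For $(iii)$, since $\chi_\epsilon\equiv1$ and $\nabla\chi_\epsilon\equiv0$ on $(\cup_i\Omega_{\epsilon,i})^c$ we have $\nabla\xi_u=\nabla u$ there, whence $\vartheta'_\epsilon(u)\xi_u=2\int_{(\cup_i\Omega_{\epsilon,i})^c}|\nabla u|^2=2\vartheta_\epsilon(u)$, which yields $(iii)$ trivially.

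The real content is $(ii)$. The quadratic part of $E'_\epsilon(u)\xi_u$ equals $\int\chi_\epsilon|\nabla u|^2+\int u\,\nabla u\cdot\nabla\chi_\epsilon$; the first term is $\ge\vartheta_\epsilon(u)$ (as $\chi_\epsilon\equiv1$ on $(\cup_i\Omega_{\epsilon,i})^c$ and $\chi_\epsilon\ge0$), while the second is $O(\epsilon)$ since $\nabla\chi_\epsilon$ is supported in the tail and $|\nabla\chi_\epsilon|\le C\epsilon$. It remains to bound from above the nonlocal term
\[
N=\int_{\mathbb{R}^N}\int_{\mathbb{R}^N}\frac{Q(\epsilon x)Q(\epsilon y)|u(x)|^p|u(y)|^p\chi_\epsilon(y)}{|x-y|^{\mu}}\,dx\,dy\ge0 .
\]
I would split the $x$-integral over $\mathcal{B}:=\cup_i(\mathcal{A}_{\epsilon,i})^{5\tau/4}$ and $\mathcal{B}^c$. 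On $\{x\in\mathcal{B}\}$ the supports of $\mathbf 1_{\mathcal B}|u|^p$ and $\chi_\epsilon|u|^p$ are separated by at least $\tau/(4\epsilon)$, so choosing $q=r$ with $\tfrac{2N}{2N-\mu}<q\le\tfrac{2N}{(N-2)p}$ (possible since $p<\tfrac{2N-\mu}{N-2}$) one has $\tfrac1q+\tfrac1r<\tfrac{2N-\mu}{N}$ and $\big||u|^p\mathbf 1_{\mathcal B}\big|_q\le|u|_{pq}^p\le C\|u\|^p$ by the Sobolev embedding; Proposition \ref{L2-2} then bounds this piece by $D_{R_\epsilon}\,C$ with $R_\epsilon=\tau/(4\epsilon)\to\infty$, hence by a quantity tending to $0$. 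On $\{x\in\mathcal{B}^c\}\subset(\cup_i(\mathcal{A}_{\epsilon,i})^{\tau})^c$ the factor $|u(x)|^p$ is itself a tail, so Proposition \ref{L2-1} together with the Sobolev embedding and $\|u\|_{H^1((\cup_i(\mathcal{A}_{\epsilon,i})^{\tau})^c)}\le 2d$ controls this ``tail--tail'' piece by $C(2d)^{2p}$, which for $d$ fixed small is absorbed, via Young's inequality, into $\tfrac12\vartheta_\epsilon(u)$ up to a remainder that is made small by shrinking $d$. Collecting terms gives $E'_\epsilon(u)\xi_u\ge\tfrac12\vartheta_\epsilon(u)-c_\epsilon$, i.e.\ $(ii)$ with $C=\tfrac12$.

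The main obstacle is precisely the estimate of $N$: in contrast with the local equation, the Choquard nonlinearity couples the concentrated bumps to the cut-off tail across all of $\mathbb{R}^N$, and for $p<2$ no pointwise or exponential decay of $u$ is available. The resolution hinges on two structural inputs, namely the separation of order $\tau/\epsilon$ between the concentration cores and $\mathrm{supp}\,\chi_\epsilon$, which activates the vanishing constant $D_{R_\epsilon}$ of Proposition \ref{L2-2} for the long-range interaction, and the a priori smallness of $u$ on the tail, which renders the short-range self-interaction absorbable into $\vartheta_\epsilon(u)$. The delicate bookkeeping is to certify that every residual is independent of $u$ and driven to $0$ uniformly over $D^\epsilon_d$ as $\epsilon\to0$ (with $d$ kept small and fixed); here the compactness of $\mathcal S$ from Lemma \ref{L2-7} is what makes the separation estimate and the tail bound uniform in the base point $(u_1,\dots,u_k)\in\mathcal S$ and in $x_i\in(\mathcal A_i)^\tau$.
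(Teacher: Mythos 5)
Your construction and estimates coincide with the paper's: the paper also takes $\xi_u$ to be a cut-off of $u$ supported outside $\cup_i(\mathcal{A}_{\epsilon,i})^{\tau}$, extracts $\vartheta_{\epsilon}(u)$ from the quadratic part plus a small cross term, splits the nonlocal term into a well-separated piece controlled by Proposition \ref{L2-2} (so that $D_{R_\epsilon}\to0$) and a tail--tail piece bounded by $C\|u\|^{2p}$ on the tail and absorbed for $d$ small, and obtains $(iii)$ by the same computation. This is essentially the same proof, including the same (slightly delicate) absorption step at the end.
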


\begin{proof}
$(i)$
 Set $
\xi_u(x)=\sum_{i=1}^{ k}\chi_{\epsilon}^i(x)u(x)\in H^1_0(\cup_{i=1}^{ k}(\mathcal{A}_{\epsilon,i})^{\tau})^c$
with $\chi_{\epsilon}^i(x)\in C_0^{\infty}\left(\mathbb{R}^N,[0,1]\right)$ a cut-off function such that $\chi_{\epsilon}^i=1$ on
$(\Omega_{\epsilon,i})^c$ and $\chi_{\epsilon}^i=0$ on
$(\mathcal{A}_{\epsilon,i})^{\tau}$ for each $i=1,\cdots, k$. Clearly, $\|\xi_u\|_{H^1_0(\cup_{i=1}^{ k}(\mathcal{A}_{\epsilon,i})^{\tau})^c}\leq 2d$.

$(ii)$ We compute that
\begin{align*}
E'_{\epsilon}(u)\xi_u=&\int_{\mathbb{R}^N}\nabla u(x)\nabla \sum_{i=1}^{ k}\chi_{\epsilon}^i(x)u(x)dx\\
&-\int_{\mathbb{R}^N}\int_{(\cup_{i=1}^{ k}(\mathcal{A}_{\epsilon,i})^{\tau})^c}
\frac{Q(\epsilon x)Q(\epsilon y)|u(x)|^p|u(y)|^{p}\sum_{i=1}^{ k}\chi_{\epsilon}^i(y)}{|x-y|^{\mu}}\\
\geq &\int_{(\cup_{i=1}^{ k}\Omega_{\epsilon,i})^c}|\nabla u|^2dx+\int_{\cup_{i=1}^{ k}(\Omega_{\epsilon,i}\setminus(\mathcal{A}_{\epsilon,i})^{\tau})}u\nabla u\nabla \sum_{i=1}^{ k}\chi_{\epsilon}^i(x)dx\\
&-\int_{\mathbb{R}^N}\int_{(\cup_{i=1}^{ k}(\mathcal{A}_{\epsilon,i})^{\tau})^c}
\frac{Q(\epsilon x)Q(\epsilon y)|u(x)|^p|u(y)|^{p}}{|x-y|^{\mu}}\\
=&\int_{(\cup_{i=1}^{ k}\Omega_{\epsilon,i})^c}|\nabla u|^2dx+(I)-(II).
\end{align*}
It is easy to deduce that
\begin{align*}
(I)=&\int_{\cup_{i=1}^{ k}(\Omega_{\epsilon,i}\setminus(\mathcal{A}_{\epsilon,i})^{\tau})}u\nabla u\nabla \sum_{i=1}^{ k}\chi_{\epsilon}^i(x)dx\\
\leq&C|\nabla u|^4_{ L^2(\cup_{i=1}^{ k}(\Omega_{\epsilon,i}\setminus(\mathcal{A}_{\epsilon,i})^{\tau}))}
\leq c_1^{\epsilon},
\end{align*}
where $\lim\limits_{\epsilon\rightarrow 0}c_1^{\epsilon}=0$.
From Propositions \ref{L2-1} and  \ref{L2-2}, we calculate that
\begin{align*}
(II)=&\int_{\mathbb{R}^N}\int_{(\cup_{i=1}^{ k}(\mathcal{A}_{\epsilon,i})^{\tau})^c}
\frac{Q(\epsilon x)Q(\epsilon y)|u(x)|^p|u(y)|^{p}}{|x-y|^{\mu}}
\\ =&\int_{\mathbb{R}^N}\int_{(\cup_{i=1}^{ k}\Omega_{\epsilon,i})^c}
\frac{Q(\epsilon x)Q(\epsilon y)|u(x)|^p|u(y)|^{p}}{|x-y|^{\mu}}+
\int_{\mathbb{R}^N}\int_{\cup_{i=1}^{ k}\Omega_{\epsilon,i}\setminus(\mathcal{A}_{\epsilon,i})^{\tau}}
\frac{Q(\epsilon x)Q(\epsilon y)|u(x)|^p|u(y)|^{p}}{|x-y|^{\mu}},\\
\leq&\int_{(\cup_{i=1}^{ k}\Omega_{\epsilon,i})^c}\int_{(\cup_{i=1}^{ k}\Omega_{\epsilon,i})^c}
\frac{Q(\epsilon x)Q(\epsilon y)|u(x)|^p|u(y)|^{p}}{|x-y|^{\mu}}+\int_{\cup_{i=1}^{ k}(\mathcal{A}_{\epsilon,i})^{\tau}}\int_{(\cup_{i=1}^{ k}\Omega_{\epsilon,i})^c}
\frac{Q(\epsilon x)Q(\epsilon y)|u(x)|^p|u(y)|^{p}}{|x-y|^{\mu}}\\
&+
2\int_{\mathbb{R}^N}\int_{\cup_{i=1}^{ k}\Omega_{\epsilon,i}\setminus(\mathcal{A}_{\epsilon,i})^{\tau}}
\frac{Q(\epsilon x)Q(\epsilon y)|u(x)|^p|u(y)|^{p}}{|x-y|^{\mu}}\\
\leq & c_2^{\epsilon}+C\|u\|^{2p}_{H^1_0(\cup_{i=1}^{ k}\Omega_{\epsilon,i})^c}.
\end{align*}
where $\lim\limits_{\epsilon\rightarrow 0}c_2^{\epsilon}=0$.
Hence, it holds $
E'_{\epsilon}(u)\xi_u\geq \int_{(\cup_{i=1}^{ k}\Omega_{\epsilon,i})^c}|\nabla u|^2dx-c_1^{\epsilon}-c_2^{\epsilon}-C\|u\|^{2p}_{H^1_0(\cup_{i=1}^{ k}\Omega_{\epsilon,i})^c}$.
Controlling $d$ sufficiently small if necessary, we may assume that for some $C>0$,
\begin{align*}
 \int_{(\cup_{i=1}^{ k}\Omega_{\epsilon,i})^c}|\nabla u|^2dx-C\|u\|^{2p}_{H^1_0(\cup_{i=1}^{ k}\Omega_{\epsilon,i})^c}\geq C\int_{(\cup_{i=1}^{ k}\Omega_{\epsilon,i})^c}|\nabla u|^2dx.
\end{align*}
Setting $c_{\epsilon}=\left(c_1^{\epsilon}-c_2^{\epsilon}\right)/C$, we obtain (\ref{e40}).

$(iii)$ (\ref{e41}) can be shown as in $(ii)$.
\end{proof}

From the conclusions of Proposition \ref{L4.1}, we can obtain the following Proposition.

\begin{proposition}\label{L4.2}
Let $c_{\epsilon}>0$ be as in Proposition $\ref{L4.1}$ $(ii)$. Then
there is a local Lipschitz vector field $
\gamma(u):D^{\epsilon}_d\rightarrow H^1_0(\cup_{i=1}^{ k}(\mathcal{A}_{\epsilon,i})^{\tau})^c$
such that, for all $u\in D^{\epsilon}_d$, there holds $\|\gamma(u)\|_{H^1}\leq 1$ and
\begin{align*}
E'_{\epsilon}(u)\gamma(u),
\vartheta'_{\epsilon}(u)\gamma(u)\geq C\left(|\nabla u|^2_{L^2(\cup_{i=1}^{ k}\Omega_{\epsilon,i})^c}-c_{\epsilon}\right),\ \text{if}\ \vartheta_{\epsilon}(u)>c_{\epsilon}.
\end{align*}
\end{proposition}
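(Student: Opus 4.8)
The plan is to build $\gamma$ directly from the explicit direction $\xi_u$ produced in Proposition \ref{L4.1}, rather than through a classical partition-of-unity pseudo-gradient gluing. The crucial observation is that the assignment $u\mapsto \xi_u=\sum_{i=1}^{k}\chi_{\epsilon}^{i}u$ used there is nothing but multiplication by the fixed cut-offs $\chi_{\epsilon}^{i}$, hence a bounded linear operator $\mathcal{L}$ on $H^1(\mathbb{R}^N)$. In particular $u\mapsto\xi_u$ is globally Lipschitz, it already takes values in $H^1_0(\cup_{i=1}^{k}(\mathcal{A}_{\epsilon,i})^{\tau})^c$, it satisfies $\|\xi_u\|\le 2d$ on $D^{\epsilon}_d$ by Proposition \ref{L4.1}$(i)$, and the two lower bounds (\ref{e40})--(\ref{e41}) hold at \emph{every} $u\in D^{\epsilon}_d$. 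Thus the only thing separating $\xi_u$ from the desired field is the normalization $\|\gamma(u)\|\le 1$.

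To normalize without losing Lipschitz regularity, I would set $m(u)=\max\{1,\|\xi_u\|\}$ and define $\gamma(u)=\xi_u/m(u)$. Since $u\mapsto\|\xi_u\|$ is Lipschitz (the norm composed with the linear map $\mathcal{L}$) and $t\mapsto\max\{1,t\}$ is $1$-Lipschitz, the scalar $m$ is locally Lipschitz with $m\ge 1$; together with $\|\xi_u\|\le 2d$ this makes the quotient $\gamma$ locally Lipschitz on $D^{\epsilon}_d$, with $\|\gamma(u)\|\le 1$ by construction and $\gamma(u)\in H^1_0(\cup_{i=1}^{k}(\mathcal{A}_{\epsilon,i})^{\tau})^c$ because membership in that space is preserved under scalar multiplication.

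Finally I would transfer the inequalities. Because $m(u)$ is a positive scalar, for any $u$ with $\vartheta_{\epsilon}(u)>c_{\epsilon}$ dividing (\ref{e40}) and (\ref{e41}) by $m(u)$ preserves the (now strictly positive) right-hand side, and since $1\le m(u)\le\max\{1,2d\}$ uniformly on $D^{\epsilon}_d$, I obtain
\begin{align*}
E'_{\epsilon}(u)\gamma(u),\ \vartheta'_{\epsilon}(u)\gamma(u)\ \ge\ \frac{C}{\max\{1,2d\}}\left(|\nabla u|^2_{L^2(\cup_{i=1}^{k}\Omega_{\epsilon,i})^c}-c_{\epsilon}\right),
\end{align*}
which is the claim after relabelling the constant $C/\max\{1,2d\}$ as $C$.

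There is essentially no hard analytic step here: the whole content sits in Proposition \ref{L4.1}, and the present statement merely repackages $\xi_u$ into a normalized locally Lipschitz field. The one point that needs care is verifying that $\gamma=\xi_u/m(u)$ is genuinely locally Lipschitz and keeps the lower bound with a constant uniform in $u$ (and independent of $p$); this is exactly why I introduce the cap $m(u)\ge 1$, which simultaneously guarantees $\|\gamma\|\le 1$ and prevents the division from degrading the estimate. Should one prefer to remain within the classical deformation framework, the same field can alternatively be produced by covering $\{\vartheta_{\epsilon}>c_{\epsilon}\}$ by neighborhoods on which (\ref{e40})--(\ref{e41}) persist with constant $C/2$ (by continuity of $E'_{\epsilon}$ and $\vartheta'_{\epsilon}$) and gluing the local choices of $\xi_u$ with a locally Lipschitz partition of unity; the linearity of $u\mapsto\xi_u$ simply renders this machinery unnecessary.
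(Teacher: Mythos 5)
Your proposal is correct and takes essentially the same route as the paper, whose entire proof is to set $\gamma(u)=\xi_u/\|\xi_u\|$ and invoke Lemma \ref{L4.1}. Your normalization by $m(u)=\max\{1,\|\xi_u\|\}$ is in fact slightly more careful than the paper's, since it rules out degeneracy of the quotient where $\|\xi_u\|$ is small (or zero) and thereby secures the local Lipschitz continuity of $\gamma$ that the bare division by $\|\xi_u\|$ does not obviously provide.
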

\begin{proof}
Let $\gamma(u)=\frac{\xi_u}{\|\xi_u\|}$ and $u\in D_d^{\epsilon}$. Then by Lemma \ref{L4.1}, the proof is completed.
\end{proof}

We consider the following ODE in $H$: for $u\in D^{\epsilon}_d$,
\begin{align}\label{e42}
\begin{cases}
\frac{dZ}{dT}=-\Gamma(\vartheta_{\epsilon}(Z))\gamma(Z),\\
Z(0,u)=u,
\end{cases}
\end{align}
where the function $\Gamma(r)\in C^{\infty}([0,\infty),[0,1])$ satisfies that $\Gamma(r)=1$ if $r\geq2c_{\epsilon}$ and $\Gamma(r)=0$ if $ r\leq c_{\epsilon}$.
Clearly, problem (\ref{e42}) has a unique solution $Z_T(u)$. Moreover, $Z_T(u)$ has the following properties.

\begin{lemma}\label{L4.3}
Let $Z_T(u)$ be the unique solutions of (\ref{e42}). Then there holds:
\begin{itemize}
\item [$(i)$] $Z_T(u)\in F_d^{\epsilon}$ for all $T>0$ and $u\in D^{\epsilon}_d$.

\item [$(ii)$]It holds
\begin{align*}
\frac{d}{dT}\vartheta_{\epsilon}(Z_T(u))
\begin{cases}
\leq 0,\ \text{for all}\ u\in D^{\epsilon}_d,\\
<0,\ \text{if}\ \vartheta_{\epsilon}(Z_T(u))>c_{\epsilon}.
\end{cases}
\end{align*}

\item [$(iii)$]
Let \begin{align*}
\widetilde{Z}_T(u)=
\begin{cases}
u(x)\ &\text{for}\ x\in \cup_{i=1}^{ k}(\mathcal{A}_{\epsilon,i})^{\tau},\\
Z_T(u)\ &\text{for}\ x\in \left(\cup_{i=1}^{ k}(\mathcal{A}_{\epsilon,i})^{\tau}\right)^c.
\end{cases}
\end{align*}
Then it holds $\frac{d}{dT}E_{\epsilon}(\widetilde{Z}_T(u))\leq0$.

\item [$(iv)$] There exists $T_{\epsilon}>0$ such that for all $u\in D^{\epsilon}_d$, $\vartheta_{\epsilon}(Z_{T_{\epsilon},u})\leq c_{\epsilon}$.
\end{itemize}
\end{lemma}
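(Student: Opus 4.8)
The plan is to treat the four items in order, exploiting that the vector field $\gamma$ furnished by Proposition \ref{L4.2} is, via Lemma \ref{L4.1}, supported in $(\cup_{i=1}^{k}(\mathcal{A}_{\epsilon,i})^{\tau})^c$ and satisfies $\|\gamma\|\le 1$. Since $\Gamma$ is smooth and bounded and $u\mapsto\gamma(u)$ is locally Lipschitz with norm $\le 1$, the right-hand side of (\ref{e42}) is a bounded, locally Lipschitz vector field, so Cauchy--Lipschitz yields a unique solution $Z_T(u)$, global in $T$ because the velocity is bounded by $1$. The backbone observation, used throughout, is that $\gamma(Z)$ vanishes on $\cup_{i=1}^{k}(\mathcal{A}_{\epsilon,i})^{\tau}$; hence $\frac{dZ}{dT}=0$ there and $Z_T(u)\equiv u$ on $\cup_{i=1}^{k}(\mathcal{A}_{\epsilon,i})^{\tau}$ for every $T$. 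In particular the gluing in $(iii)$ is vacuous, $\widetilde Z_T(u)=Z_T(u)$, so $(iii)$ reduces to differentiating $E_\epsilon$ along the flow.

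Items $(ii)$ and $(iii)$ then follow by the chain rule. For $(ii)$, $\frac{d}{dT}\vartheta_\epsilon(Z_T(u))=-\Gamma(\vartheta_\epsilon(Z_T))\,\vartheta'_\epsilon(Z_T)\gamma(Z_T)$: when $\vartheta_\epsilon(Z_T)\le c_\epsilon$ we have $\Gamma=0$ and the derivative vanishes, while when $\vartheta_\epsilon(Z_T)>c_\epsilon$ Proposition \ref{L4.2} gives $\vartheta'_\epsilon(Z_T)\gamma(Z_T)\ge C(\vartheta_\epsilon(Z_T)-c_\epsilon)>0$ and $\Gamma\ge0$, so the derivative is $\le0$, and strictly negative once $\Gamma>0$, which we arrange by choosing $\Gamma$ strictly increasing on $(c_\epsilon,2c_\epsilon)$. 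The same computation with $E_\epsilon$ in place of $\vartheta_\epsilon$, using $E'_\epsilon(Z_T)\gamma(Z_T)\ge C(\vartheta_\epsilon(Z_T)-c_\epsilon)$ from Proposition \ref{L4.2} together with $\widetilde Z_T=Z_T$, gives $\frac{d}{dT}E_\epsilon(\widetilde Z_T(u))=-\Gamma\,E'_\epsilon(Z_T)\gamma(Z_T)\le0$, which is $(iii)$.

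For $(i)$ I would prove that the $H^1$-distance of the flow to a fixed nearest profile does not increase. Fix $v\in D^\epsilon$ realising (up to arbitrarily small slack) $\inf_{w\in D^\epsilon}\|u-w\|\le d$, and set $h(T)=\tfrac12\|Z_T(u)-v\|^2$. Then $h'(T)=-\Gamma\,\langle\gamma(Z_T),Z_T-v\rangle=-\frac{\Gamma}{\|\xi_{Z_T}\|}\langle\xi_{Z_T},Z_T-v\rangle$ with $\xi_{Z}=\sum_i\chi^i_\epsilon Z$. The dominant contribution to $\langle\xi_Z,Z-v\rangle$ is $\int_{(\cup_i\Omega_{\epsilon,i})^c}(|\nabla Z|^2+Z^2)\ge0$, because there $\chi^i_\epsilon\equiv1$ and $v\equiv0$ (elements of $D^\epsilon$ are, owing to the $\zeta_\epsilon$ cut-off, compactly supported near $\mathcal{A}_{\epsilon,i}$ and so vanish in the bulk); the remaining terms live on the transition annuli $\Omega_{\epsilon,i}\setminus(\mathcal{A}_{\epsilon,i})^{\tau}$ and are controlled by $|\nabla\chi^i_\epsilon|=O(\epsilon)$ together with the smallness of the bump tails there. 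Hence $h'(T)\le0$ up to a negligible error, so $\|Z_T(u)-v\|\le d$ and $Z_T(u)\in F^\epsilon_d$. I expect this to be the main obstacle, since it is the one place where the precise interplay between the cut-offs $\chi^i_\epsilon$, the localisation of $D^\epsilon$ and the nonlocal term must be tracked; crucially the argument avoids any decay estimate on $u_i$, which would be unavailable in the sublinear range $p<2$.

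Finally, $(iv)$ follows by integrating the differential inequality from $(ii)$. On $D^\epsilon_d$ one has the uniform bound $\vartheta_\epsilon(u)=|\nabla u|^2_{L^2((\cup_i\Omega_{\epsilon,i})^c)}\le\|u\|^2\le C_0$, since $u=v+\phi$ with $v$ ranging over the compact set $\mathcal S$ and $\|\phi\|\le d$. As long as $\vartheta_\epsilon(Z_T)\ge 2c_\epsilon$ we have $\Gamma=1$ and $\frac{d}{dT}\vartheta_\epsilon(Z_T)\le-C(\vartheta_\epsilon(Z_T)-c_\epsilon)\le-Cc_\epsilon$, a uniform linear decay rate; hence $\vartheta_\epsilon(Z_T)\le C_0-Cc_\epsilon T$ until the threshold $2c_\epsilon$ is reached, which must occur no later than $T_\epsilon:=C_0/(Cc_\epsilon)$, uniformly in $u\in D^\epsilon_d$. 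This yields $\vartheta_\epsilon(Z_{T_\epsilon}(u))\le 2c_\epsilon$; since $c_\epsilon\to0$, relabelling $2c_\epsilon$ as $c_\epsilon$ (which affects neither $(ii)$ nor Proposition \ref{L3.6}) gives the stated bound. The only subtlety is that the decay rate degenerates as $\vartheta_\epsilon\downarrow c_\epsilon$, so one cannot reach $c_\epsilon$ exactly in finite time; the factor-two relabelling circumvents this.
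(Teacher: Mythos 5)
Your handling of $(ii)$, $(iii)$ and $(iv)$ is correct and matches the paper's route: $(ii)$ and $(iii)$ are the chain rule combined with Proposition \ref{L4.2}, and your observation that the gluing in $(iii)$ is vacuous (because $\gamma$ takes values in $H^1_0\bigl((\cup_{i=1}^{k}(\mathcal{A}_{\epsilon,i})^{\tau})^c\bigr)$, so $Z_T(u)\equiv u$ on $\cup_{i=1}^{k}(\mathcal{A}_{\epsilon,i})^{\tau}$) is the same fact the paper records by noting that $Z_T(u)$ is independent of $T$ on $\partial(\mathcal{A}_{\epsilon,i})^{\tau}$. For $(iv)$ the paper only asserts that it ``follows from $(ii)$ easily''; your quantitative version, with the uniform bound $\vartheta_{\epsilon}\le C_0$ on $D^{\epsilon}_d$, the stopping time $T_{\epsilon}\sim C_0/(Cc_{\epsilon})$, and the relabelling of $2c_{\epsilon}$ as $c_{\epsilon}$ to cope with the degenerate decay rate near the threshold, is more careful than the paper and is harmless since only $c_{\epsilon}\to0$ is used downstream.

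The genuine gap is in $(i)$, where you depart from the paper: the paper deduces $(i)$ from the monotonicity of $\vartheta_{\epsilon}$ established in $(ii)$ together with global existence of the flow, whereas you try to show that $h(T)=\tfrac12\|Z_T(u)-v\|^2$ is non-increasing for a fixed near-minimiser $v=\sum_i\zeta_{\epsilon}(\cdot-\widetilde{x}^i)u_i(\cdot-\widetilde{x}^i)\in D^{\epsilon}$. Two steps fail as written. First, the terms you discard are not uniformly negligible: the contribution of $v$ to $\langle\xi_{Z_T},Z_T-v\rangle$ lives where $\mathrm{supp}\,\zeta_{\epsilon}(\cdot-\widetilde{x}^i)$ meets $(\cup_i(\mathcal{A}_{\epsilon,i})^{\tau})^c$, and since $x^i$ is only constrained to lie in $(\mathcal{A}_i)^{\tau}$ it may sit arbitrarily close to $\partial(\mathcal{A}_i)^{\tau}$, so this overlap region can come within distance $o(1/\epsilon)$ of the centre of the bump; there $\|\zeta_{\epsilon}u_i(\cdot-\widetilde{x}^i)\|_{H^1}$ is not small, and, as you yourself stress, no pointwise decay of $u_i$ is available for $p<2$ to control it. Second, even granting a bound $h'(T)\le\eta_{\epsilon}$ with $\eta_{\epsilon}\to0$, it must be integrated up to the time $T_{\epsilon}\sim C_0/(Cc_{\epsilon})\to\infty$ produced in your own proof of $(iv)$; the accumulated drift is of order $\eta_{\epsilon}c_{\epsilon}^{-1}$, which has no reason to vanish since $\eta_\epsilon$ and $c_\epsilon$ are unrelated, and you would only conclude $Z_T(u)\in F^{\epsilon}_{d'}$ for some $d'>d$, not $F^{\epsilon}_{d}$. (There is also a mild circularity in asserting global existence before knowing the flow stays in the domain of $\gamma$; a continuation argument handles this, but it makes $(i)$ the statement that carries the real burden.) The paper's own justification of $(i)$ is admittedly a one-line appeal to $(ii)$, but whichever route one takes, the monotone quantity must control the distance to $D^{\epsilon}$ uniformly over the whole (long) lifespan of the flow, and $h$ as you have set it up does not.
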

\begin{proof}
 First we show $(ii)$. Since $\vartheta_{\epsilon}(Z_T(u))>c_{\epsilon}$ implies $\Gamma(\vartheta_{\epsilon}(Z_T(u)))>0$, it follows from Proposition \ref{L4.2} that
\begin{align*}
\frac{d}{dT}\vartheta_{\epsilon}(Z_T(u))=-\Gamma(\vartheta_{\epsilon}(Z_T(u)))\vartheta'_{\epsilon}(Z_T(u))\gamma(Z_T(u))<0.
\end{align*}
Thus,  $(ii)$ holds.

By $(ii)$, we get that $\vartheta_{\epsilon}(Z_T(u))$ is non-increasing. This indicates  $Z_T(u)$ exists and satisfies $Z_T(u)\in D^{\epsilon}_d$ for $T\in [0,\infty)$. Hence
$(i)$ follows.
$(iv)$ follows from $(ii)$ easily.
 Noting $Z_T(u)|_{x\in \cup_{i=1}^{ k}\partial(\mathcal{A}_{\epsilon,i})^{\tau}}$ is independent of $T$, we have
$\widetilde{Z}_T(u)\in H^1_0(\mathbb{R}^N)$. Therefore $(iii)$ follows from Proposition \ref{L4.2} $(ii)$.
\end{proof}

\textbf{Proof of Proposition $\ref{L3.6}$:} For $u\in D^{\epsilon}_d$, we define $
J_{\epsilon}(u)=\widetilde{Z}_{T_{\epsilon}}(u)$.
The desired properties $(\ref{e28})-(\ref{e30})$ follows from Lemma \ref{L4.3}.

\section*{Acknowledgments}
H. Zhang is supported by Hunan Provincial Innovation Foundation For Postgraduate (No. CX20230219) and the Fundamental Research Funds for the Central Universities of Central South University (No. 1053320220488). H. Chen is supported by the National Natural Science Foundation of China (Grant No. 12071486).

\end{document}